%
%
%

\documentclass[lectures]{pcms-l}



%

 \usepackage{amssymb}


 \usepackage{graphicx}
\usepackage{epsfig}
\usepackage{verbatim}




  \theoremstyle{plain}
  \newtheorem{theorem}{Theorem} [lecture]
 \newtheorem{lemma}[theorem]{Lemma}
  \newtheorem{proposition}[theorem]{Proposition}

%
  \theoremstyle{definition}
 \newtheorem{definition}[theorem]{Definition}  
  \newtheorem{example}[theorem]{Example}
  \newtheorem{exercise}[theorem]{Exercise}

  \theoremstyle{remark}
 \newtheorem{remark}[theorem]{Remark}


  \numberwithin{equation}{chapter}



\newcommand{\Prob} {{\bf P}}
\newcommand{\Z}{{\mathbb Z}} 
\newcommand{\E}{{\bf E}}

\newcommand{\R}{{\mathbb{R}}}
\newcommand{\C}{{\mathbb C}}
\newcommand{\rad}{{\rm rad}}

\newcommand{\dist}{{\rm dist}}

\newcommand{\domains}  {{\mathcal D}}

\def \Im {{\rm Im}}
\def \Re {{\rm Re}}
\def \p {\partial}

\def \Half {{\mathbb H}}

\def \Disk {{\mathbb D}}

\def \hcap {{\rm hcap}}
\def \distsub {{\Upsilon}}

\def \F{{\mathcal F}}

\def \beq {\begin{equation}}
\def \eeq {\end{equation}}
\def \domains {{\mathcal D}}

\def \c {{\bf c}}
\def \O {{O}}
\def \bubble{{\Gamma}}
\def \rect {{\mathcal R}}

\def \tx {{\tilde \xi}}


\begin{document}


\frontmatter
\tableofcontents


\mainmatter


 \LectureSeries 
  {Schramm-Loewner Evolution ($SLE$)\author{G. Lawler}}



\address{Department of Mathematics\\University of Chicago\\
  5734 University Avenue\\Chicago, IL 60637-1546}
\email{lawler@math.uchicago.edu}
\thanks{Research supported by National Science Foundation
grant DMS-0405021.}






\section*{Introduction}

This is the first expository set of notes on SLE I have written since
publishing a book two years ago \cite{Lbook}.  That book covers material
from a year-long class, so I cannot cover everything there.  However,
these notes are not just a subset of those notes, because there is
a slight change of perspective.  The main differences are:
\begin{itemize}
\item  I have defined $SLE$ as a finite measure on paths that
is not necessarily a probability measure.  This seems more natural
from the perspective of limits of lattice systems and seems to
be more useful when extending $SLE$ to non-simply connected domains.
(However, I do not discuss non-simply connected domains in these
notes.)
\item  I have  made more use of
the Girsanov theorem in studying corresponding martingales
and local martingales.  
\end{itemize}
As in \cite{Lbook}, I will focus these notes on the continuous
process $SLE$ and will not prove any results about convergence
of discrete processes to $SLE$.  However, my first lecture will be
about discrete processes --- it is very hard to appreciate SLE
if one does not understand what it is trying to model.

I would like to thank Michael Kozdron,
Robert Masson, Hariharan Narayanan,
 and Xinghua Zheng for their assistance in the
preparation of these notes.  Figure 8 was produced by
Geoffrey Grimmett.

\lecture{Scaling limits of lattice models}

The Schramm-Loewner evolution (SLE) is a measure on continuous
curves that is a candidate for the scaling limit for discrete
planar models in statistical physics.  Although our lectures will
focus on the continuum model, it is hard to understand $SLE$ without
knowing some of
the discrete models that motivate it.  In this lecture, I will
introduce some of the discrete
models.  By assuming some kind
of ``conformal invariance'' in the limit, we will arrive at some
properties that we would like the continuum measure to satisfy.

\section{Self-avoiding walk (SAW)}  \label{sawsec}

A self-avoiding walk (SAW) of length $n$ in the integer lattice
$\Z^2 = \Z + i \Z$ is a sequence of lattice points
\[            \omega =[\omega_0,\ldots,\omega_n] \]
with $|\omega_j - \omega_{j-1}| = 1, j=1,\ldots,n$, and
$\omega_j \neq \omega_k$ for $j < k$.  If $J_n$ denotes
the number of SAWs of length $n$ with $\omega_0 = 0$, it is
well known that
\[               J_n \approx e^{\beta n} , \;\;\;\;
  n \rightarrow \infty ,\]
where $e^{\beta}$ is the {\em connective constant} whose value
is not known exactly. Here $\approx$ means that $\log J_n \sim
\beta n$ where $f(m) \sim g(m)$ means $f(m)/g(m) \rightarrow
1$.    In fact, it is believed that there
is an exponent, usually denoted $\gamma$, such that
\[                J_n \asymp n^{\gamma -1} \, e^{\beta n},
 \;\;\;\;
  n \rightarrow \infty, \]
where $\asymp$ means that each side is bounded by a constant times
the other.
The exponent $\nu$ is defined roughly by saying that the typical
diameter (with respect to the uniform probability measure on
SAWs of length $n$ with $\omega_0 = 0$) is of order $n^{\nu}$.
The constant $\beta$ is special to the square lattice, but the
exponents $\nu$ and $\gamma$ are examples of lattice-independent
{\em critical exponents}
that should be observable in a ``continuum limit''.  For example,
 we would expect the fractal dimension
of the paths in the continuum limit
to be $d= 1/\nu$.

To take a continuum limit we let $\delta > 0$ and
\[    \omega^{\delta}(j \delta^d ) = \delta \, \omega(j). \]
We can think of $\omega^\delta$ as a SAW on the lattice $\delta
\Z^2$ parametrized so that it goes a distance of order one in
time of order one.  We can use linear interpolation to make
$\omega^{\delta}(t)$ a continuous curve. 
  Consider the square in $\C$
\[            D= \{x+iy: -1 < x < 1 , -1 < y < 1 \} , \]
and let $z=-1, w = 1$.  For each integer $N$ we can consider a
finite measure on continuous curves $\gamma:(0,t_\gamma)
\rightarrow D$ with $\gamma(0+) = z, \gamma(t_\gamma) = w$
obtained as follows.  To each SAW $\omega$ of
length $n$ in $  \Z^2$
with $\omega_0 = -N, \omega_n = N$ and $\omega_1,\ldots,\omega_{n-1}
 \in ND$
we give measure $e^{-\beta n}$.  If we identify $\omega$ with
$\omega^{1/N}$ as above, this gives a measure on curves in $D$ from
$z$ to $w$.  The total mass of this measure is
\[           Z_N(D;z,w):=
      \sum_{\omega: Nz \rightarrow Nw,
    \omega \subset N D} e^{-\beta |\omega|}. \]
It is conjectured that there is a $b$ such that as $N \rightarrow \infty$,
\begin{equation}  \label{scalingrule}
            Z_N(D;z,w) \sim C(D;z,w) \, N^{-2b} . 
\end{equation}
Moreover, if we multiply by $N^{2b}$ and take a limit, then
there is a measure $\mu_D(z,w)$ of total mass $C(D;z,w)$ supported
on simple (non self-intersecting)
curves from $z$ to $w$ in $D$.  The dimension of these curves will
be $d = 1/\nu$.

\begin{figure}[htb]
\begin{center}
\epsfig{file=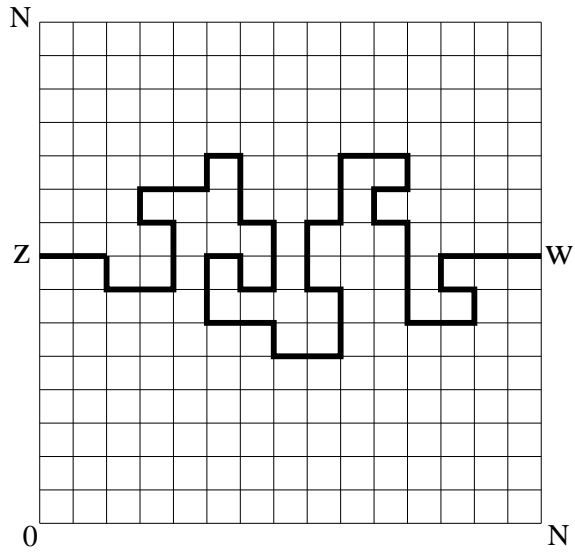}
\caption{Self-avoiding walk in a domain} \label{SAW}
\end{center}
\end{figure}

\begin{figure}[htb]
\begin{center}
\epsfig{file=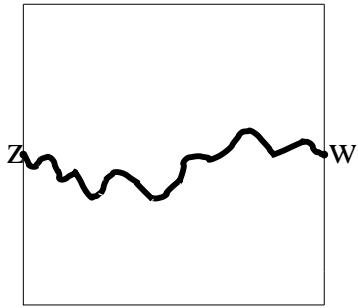}
\caption{Scaling limit of SAW}
\end{center}
\end{figure}

Similarly, if $D$ is another domain and $z,w \in \p D$, we can
consider SAWs from $z$ to $w$ in $D$.  If $\p D$ is smooth
at $z,w$, then (after taking care of the local lattice effects ---
we will not worry about this here), we define the measure
as above, multiply by $N^{2b}$ and take a limit.  We conjecture
that we get a measure $\mu_D(z,w)$ on simple curves from $z$
to $w$ in $D$.  
We write the measure $\mu_D(z,w)$ as
\[           \mu_D(z,w) = C(D;z,w) \, \mu_D^\#(z,w) , \]
where $\mu_D^\#(z,w)$ denotes a probability measure.

\begin{figure}[htb]
\begin{center}
\epsfig{file=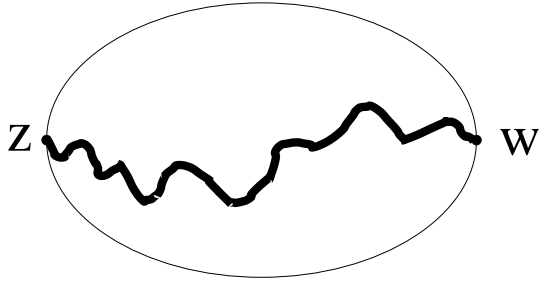}
\caption{Scaling limit of SAW in a different domain}
\end{center}
\end{figure}

It is believed that the scaling limit satisfies some
kind of ``conformal invariance''.  To be more precise
we assume the following 
  conformal {\em covariance} property: if
$f: D \rightarrow f(D)$ is a conformal transformation
and $f$ is differentiable in neighborhoods of $z,w \in \p D$,
then
\[        f \circ \mu_D(z,w) = |f'(z)|^b \, |f'(w)|^b \,
    \mu_{f(D)}(f(z),f(w)). \]
In other words the total mass satisfies the scaling
rule
\[       C(D;z,w) = |f'(z)|^b \, |f'(w)|^b \, C(f(D);f(z),f(w)) , \]
and the corresponding probability measures are conformally
{\em invariant}:
 \[     f \circ \mu_D^\#(z,w) =  
    \mu_{f(D)}^\#(f(z),f(w)). \]

Let us be a little more precise about the definition of
$  f \circ \mu_D^\#(z,w)$.  Suppose $\gamma:(0,t_\gamma) \rightarrow
D$ is a curve with $\gamma(0+) = z, \gamma(t_\gamma-) = w$.
For ease, let us assume that $\gamma$ is simple.  Then the curve
$f \circ \gamma$ is the corresponding curve from $f(z)$ to $f(w)$.
At the moment, we have not specified the parametrization of
$f \circ \gamma$.  We will consider two possibilities:
\begin{itemize}
\item  {\bf Ignore the parametrization}.  We consider two curves
equivalent if one is an (increasing)
 reparametrization of the other.  In this case we do not need
to specify how we parametrize $f \circ \gamma$.
\item {\bf Scaling by the dimension $d$.}  If $\gamma$ has
the parametrization as given in the limit, then the amount
of time need for $f \circ \gamma$ to traverse $f(\gamma[t_1,t_2])$
is
\[      \int_{t_1}^{t_2} |f'(\gamma(s))|^d \, ds . \]
\end{itemize}
In either case, if we start with the probability
measure $\mu_D^\#(z,w)$,
the transformation $\gamma \mapsto f\circ \gamma$ induces a
probability measure which we call $f \circ \mu_D^\#(z,w)$.

\begin{remark} $\;$

\begin{itemize}
\item Since $\mu_D^\#(z,w)$ is a conformal {\em invariant}, we can
define $f  \circ \mu_D^\#(z,w)$ even if $\p D$ is not
smooth at $z,w$.  (For really bad conformal tranformations, one
needs to worry about the continuity of $f \circ \gamma$ at 
$f(z)$ and $f(w)$, but we do not need to consider transformations
that are {\em that} bad!)
\item The Riemann mapping theorem tells us that if $D,D_1$
are simply connected domains; $z,w$ distinct points in $\p D$;
$z_1,w_1$ distinct points in $\p D_1$, then there is a 
one-parameter family of conformal transformations $f:D \rightarrow
D_1$ with $f(z) = z_1, f(w)= w_1$.  In particular, if we know the measure
for one simply connected domain $D$, we know it for all 
simply connected domains.
\item  In particular, if we know $\mu_\Half^\#(0,\infty)$, then
we know $\mu_D(z,w)$ for all simply connected domains.  Here
$\Half$ denotes the upper half plane.
\item The measure $\mu_\Half^\#(0,\infty)$ must be invariant
under the dilation $z \mapsto rz$ ($r>0$).
\item  Although the map $f:D \rightarrow D_1$ with $f(z) = z_1,
f(w) = w_1$ is not uniquely defined the quantity $f'(z) \, f'(w)$
is independent of the choice.  
\item One can choose a unique
such $f$ with $|f'(w)| = 1$ in which case the conformal covariance
condition becomes  
 \[ f \circ \mu_D(z,w) = |f'(z)|^b \,  
    \mu_{f(D)}(f(z),f(w)). \]
When $D$ is a subdomain of $\Half$ with $\Half \setminus D$
bounded and $w = f(w) = \infty$, then the condition $|f'(w)| = 1$
translates to $f(w') \sim w'$ as $w' \rightarrow \infty$.
\end{itemize}
\end{remark}

There are two more properties that we would expect the family
of measures $\mu_D(z,w)$ to have.  The first of these will
be shared by all the examples in this section while the second
will not.  We just state the properties, and leave it to the
reader to see why one would expect them in the limit.

\begin{itemize}

\item  {\bf Domain Markov property}.  Consider the measure
$\mu^\#_D(z,w)$ and suppose an initial segment of
the curve $\gamma(0,t]$ is observed.  Then the conditional
distribution of the remainder of the curve given $\gamma(0,t]$
 is the same
as $\mu_{D \setminus \gamma(0,t]}^\#(\gamma(t),w).$

\begin{figure}[htb]
\begin{center}
\epsfig{file=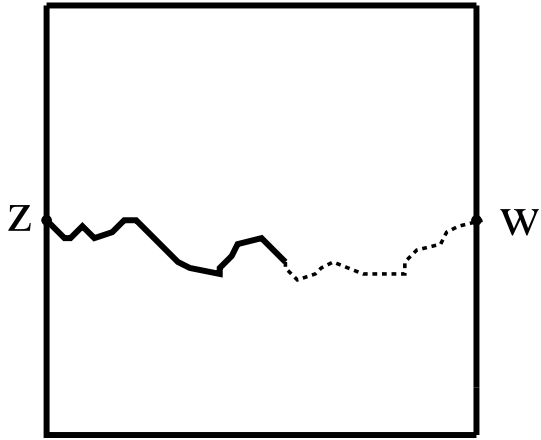}
\caption{Domain Markov property}
\end{center}
\end{figure}

\item  {\bf Restriction property}.  Suppose $D_1 \subset D$.
Then $\mu_{D_1}(z,w)$ is $\mu_D(z,w)$ restricted to paths
that lie in $D_1$.   In terms of Radon-Nikodym derivatives, this
can be phrased as
\[            \frac{d \mu_{D_1}(z,w)}{  d \mu_D(z,w)} (\gamma)
   = 1\{\gamma(0,t_\gamma) \subset D_1\} . \]

\begin{figure}[htb]
\begin{center}
\epsfig{file=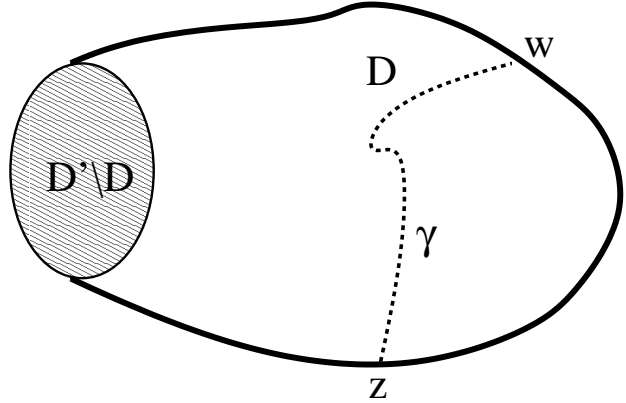}  \label{boundper}
\caption{Illustrating the restriction property}
\end{center}
\end{figure}

\end{itemize}

We have considered the case where $z,w \in \p D$.  We could
consider $z \in \p D, w \in D$.
In this case the measure is defined similarly, but \eqref{scalingrule}
becomes
\[   Z_D(z,w) \sim  C(D;z,w) \, N^{-b} \, N^{- \tilde b}, \]
where $\tilde b$ is a different exponent (see
Lectures 5 and
6).  The limiting
measure $\mu_D(z,w)$ would satisfy the conformal covariance
rule
\[          f \circ \mu_D(z,w) = |f'(z)|^b \, |f'(w)|^{\tilde b}
  \, \mu_{f(D)}(f(z),f(w)). \]
Similarly we could consider $\mu_D(z,w)$ for $z,w \in D$.

\section{Loop-erased random walk}  \label{lerwsec}

We start with simple random walk.  Let $\omega$ denote a nearest
neighbor random walk from $z$ to $w$ in $D$.  We no longer put
in a self-avoidance constraint.  We give each walk $\omega$
measure $4^{-|\omega|}$ which is the probability that the
first $n$ steps of an ordinary
random walk in $\Z^2$ starting at $z$ are $\omega$. The total
mass of this measure is the probability that a simple random
walk starting at $z$ immediately goes into the domain and
then leaves the domain at $w$.  Using the ``gambler's ruin'' estimate
for one-dimensional random walk, one can show that the total
mass of this measure decays like $O(N^{-2})$;  in
fact (Exercise \ref{may2.exer1})
\begin{equation}  \label{may2.1}
 Z_N(D;z,w) \sim C(D;z,w) \, N^{-2}, \;\;\;\; N \rightarrow
\infty,
\end{equation}
where $C(D;z,w)$ is  the ``excursion Poisson kernel'',
$H_{\p D}(z,w)$,
defined to be the normal derivative of the
Poisson kernel $H_D(\cdot,w)$ at $z$.
  In the notation of the previous section
$b=1$.  For each realization of the walk, we produce a self-avoiding
path by erasing the loops in chronological order.

\begin{figure}[htb]  \label{simplefigure}
\begin{center}
\epsfig{file=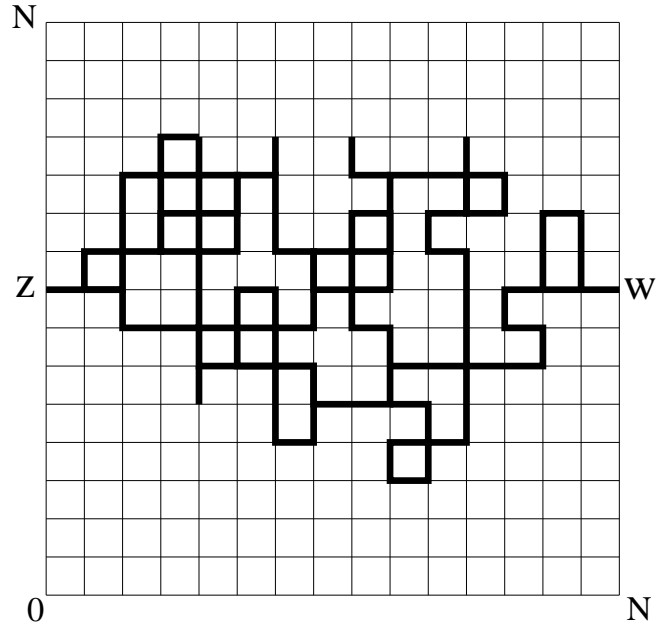}
\caption{Simple random walk in $D$}
\end{center}
\end{figure}

\begin{figure}[htb]
\begin{center}
\epsfig{file=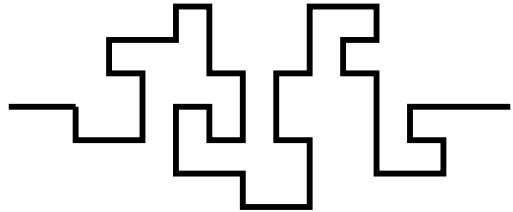}
\caption{The walk obtained from erasing loops
 chronologically}
\end{center}
\end{figure}

Again we are looking for a continuum limit $\mu_D(z,w)$
with paths of
dimension $d$ (not  the same $d$ as for SAW).  The limit
should satisfy
\begin{itemize}
\item Conformal covariance
\item Domain Markov property
\end{itemize}
However, we would not expect the limit to satisfy the restriction
property.  The reason is that the measure given to each self-avoiding
walk $\omega$ by this procedure
is determined by the number of ordinary random walks which
produce $\omega$ after loop erasure.  If we make the domain smaller,
then we lose some random walks that would produce $\omega$ and hence
the measure would be smaller.  In terms of Radon-Nikodym derivatives,
we would expect
\[             \frac{d \mu_{D_1}(z,w)}{  d \mu_D(z,w)} < 1 . \]

\section{Percolation}  \label{percsec}

Suppose that every point in the triangular lattice in the upper
half plane is colored black or white independently with each
color having probability $1/2$.  A typical realization is
 illustrated in Figure
8 
(if one ignores the bottom row).

\begin{figure}[htb]
\begin{center}
\includegraphics[width=10cm]{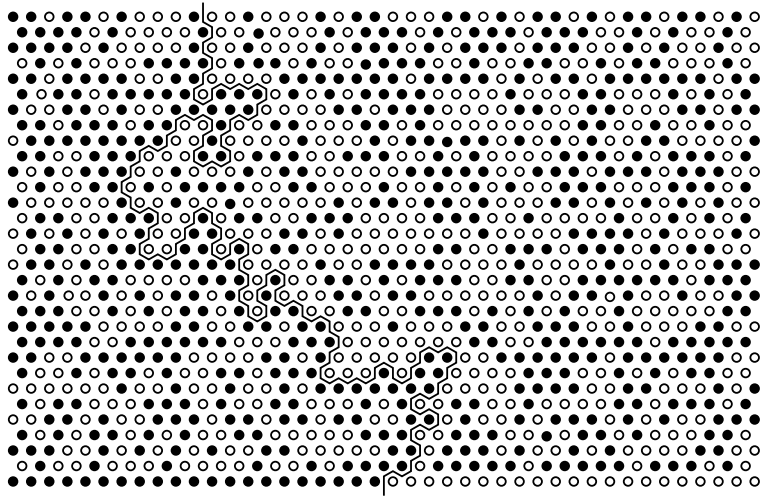}
\caption{The percolation exploration process.}
\end{center}
\label{beffarafig2}
\end{figure}

We now put a boundary condition on the bottom row as illustrated ---
all black on one side of the origin and all white on the other side.
For any realization of the coloring, there is a unique
curve starting at the bottom row that has all white vertices on
one side and all black vertices on the other side.  This is called
the {\em percolation exploration process}.  Similarly we could start
with a domain $D$ and two boundary points $z,w$; give a boundary
condition of black on one of the arcs and white on the other arc;
put a fine triangular lattice inside $D$; color vertices black or
white independently with probability $1/2$ for each; and then consider
the path connecting $z$ and $w$.  In the limit, one might hope
for a continuous interface.  In comparison to the previous examples,
the total mass of the lattice measures is one; another way of saying
this is that $b = 0$.  We suppose that the curve is conformally
invariant, and one can check that it should satisfy the domain
Markov property.

The scaling limit of percolation satisfies another property called
the {\em locality property}.  Suppose $D_1 \subset D$ and $z,w
\in \p D \cap \p D_1$ as in Figure 5.  Suppose that only an initial
segment of $\gamma$ is seen.  To determine the measure of the
initial segment, one only observes the value of the percolation
cluster at vertices adjoining $\gamma$.  Hence the measure of
the path is the same whether it is considered as a curve in
$D_1$ or a curve in $D$.  The locality property is stronger
than the restriction property which SAW satisfies.  The restriction
property is a similar statement that holds for the
entire curve $\gamma$ but not for all initial segments of
$\gamma$.

\begin{figure}[htb]
\begin{center}
\epsfig{file=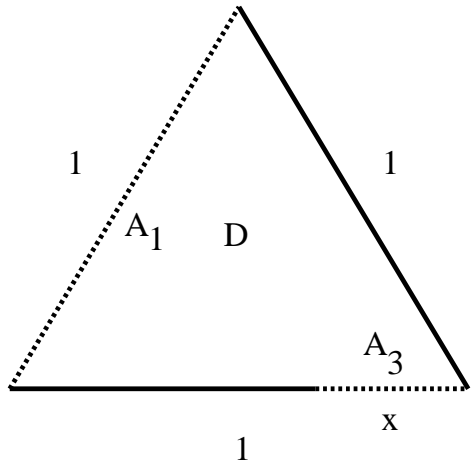}
\caption{Cardy's formula:
  $P_D(A_1,A_3) = x$.}
\end{center}
\end{figure}

There is another well known conformal invariant for percolation
known as Cardy's formula, named after the physicist who first
predicted\footnote{The word ``predicted'' here means that the formula
was found from a nontrivial, but not mathematically rigorous,
argument. Much of the work by theoretical physicists using
conformal field theory falls into this category.  Although nonrigorous,
the ideas are deep and involve a number of different areas
of mathematics.}  the formula.  Suppose $D$ is a simply
connected domain and the boundary is divided into four arcs,
$A_1,A_2,A_3,A_4$ in counterclockwise order.  Let $P_D(A_1,A_3)$
be the limit as the lattice spacing goes to zero of the probability
that in a percolation cluster as above there is a connected
cluster of white vertices connecting $A_1$ to $A_3$.  This
should be a conformal invariant.  It turns out that the nicest
domain to give the formula is an equilateral triangle as shown in
the Figure 9.

\section{Ising model}

The Ising model is a simple model of a ferromagnet.  We will consider
the triangular lattice as in the previous section.  Again we color
the vertices black or white although we now think of the colors as
spins.  If $x$ is a vertex, we let $\sigma(x)  = 1$ if $x$
is colored black and $\sigma(x) = -1$ if $x$ is colored white.  The
measure on configurations is such that neighboring spins
like to have the same sign.

\begin{figure}[htb]
\begin{center}
\epsfig{file=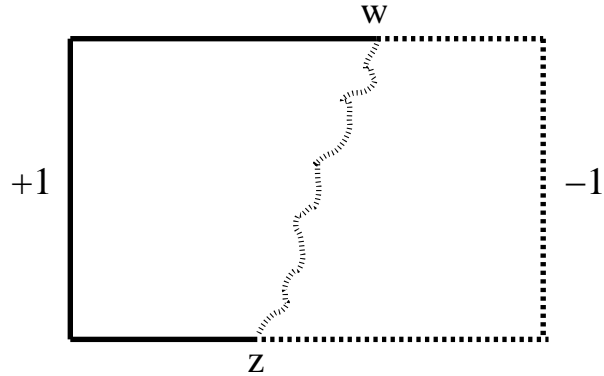}
\caption{Ising interface.}
\end{center}
\end{figure}

It is easiest to define the measure for
a finite collection of spins.  Suppose $D$ is a bounded domain in
$\C$ with two marked boundary points $z,w$ which give us two boundary
arcs.  We consider a fine triangular lattice in $D$; and fix boundary
conditions $+1$ and $-1$ respectively on the two boundary arcs.  Each
configuration of spins is given energy
\[                    {\mathcal E} = -\sum_{x \sim y}
               \sigma(x) \, \sigma(y) , \]
where $x \sim y$ means that $x,y$ are nearest neighbors.  We then
give measure $e^{-\beta \mathcal E}$ to a configuration of spins.
If $\beta$ is small, then the correlations are localized and spins
separated by a large distance are almost independent.  If $\beta$ is 
large, there is long-range correlation.  There is a critical
$\beta_c$ that separates these two phases.

For each configuration of spins there is a well-defined boundary
between $+1$ spins and $-1$ spins defined in exactly the same way as
the percolation exploration process.  At the critical $\beta_c$
it is believed that this gives an interesting fractal curve and
that it should satisfy conformal covariance and the domain Markov
property.

\section{Assumptions on limits}

Our goal is to understand the possible continuum limits for these
discrete models.  We will discuss the boundary to boundary case here
but one can also have boundary to interior or interior to interior.
(The terms ``surface'' and ``bulk'' are often used for boundary
and interior.)  Such a limit is a measure $\mu_D(z,w)$ on curves
from $z$ to $w$ in $D$ which can be written
\[               \mu_D(z,w) = C(D;z,w) \, \mu_D^\#(z,w), \]
where $\mu_D^\#(z,w)$ is a probability measure.  The
existence of 
$\mu_D(z,w)$ assumes smoothness of $\p D$ near $z,w$, but the
probability measure $\mu_D^\#(z,w)$ exists even without the smoothness
assumption.  The two basic assumptions are:
\begin{itemize}
\item  Conformal covariance of $\mu_D(z,w)$ and
  conformal invariance  of  $\mu_D^\#(z,w)$.
\item  Domain Markov property.
\end{itemize} 

The starting point for the Schramm-Loewner evolution is to show
that {\em if we ignore the parametrization of the curves}, then
there is only a one-parameter family of probability measures
$\mu_D^\#(z,w)$ for {\em simply connected} domains $D$ that
satisfy conformal invariance and the domain Markov property.
We will construct this family. The parameter is usually denoted
$\kappa > 0$.  By the Riemann mapping theorem, it
suffices to construct the measure for one simply connected domain
and the easiest is the upper half plane $\Half$ with boundary
points $0$ and $\infty$.  As we will see, there are a number
of ways of parametrizing these curves.

\section{Exercises for Lecture 1}

\begin{exercise}
let $J_n$ denote the number of SAWs of length $n$ with $\omega_0=0$
in $\Z^2$.
\begin{enumerate}
\item  Show that there exists a $\beta$ with $2 \leq
e^\beta \leq 3$  such that
\[         \lim_{n \rightarrow \infty} \frac{\log J_n}{n} = \beta . \]
\item Prove that $2 < e^{\beta} < 3$. 
\end{enumerate}
\end{exercise}

\begin{exercise}  Suppose $D$ is a simply connected
domain and $f:D \rightarrow D$ is a conformal
transformation and $z,w \in \p D$.  Suppose $\p D$ is smooth
near $z,w$ and $f(z) = z, f(w) = w$.  Show that $f'(z) \, f'(w) = 1$.
\end{exercise}

\begin{exercise} \label{may2.exer1}
 Let 
\[  V =  V_N = \{ j + ik \in \Z \times i\Z  : 1 \leq
  j \leq N-1, 1 \leq k \leq N-1\}. \]
Let $S_n$ denote a simple random walk starting at
$j + i k \in V_N$ and let $\tau = \tau_N = \min\{n:
S_n \not\in V_N\}. $  Let
\[       H(j+ik,l +im ) = \Prob^{j+ik} \{S_\tau = l+im \}. \]
We will compute this.  Let $\p V  = \{z \in \Z^2
: \dist(z,V) = 1\}$ and $\overline V = V \cup
\p V$.   Let ${\mathcal A}$ denote the set of real-valued
functions $f$ on 
$\overline V$ satisfying:
\begin{itemize}
\item  $f$ is discrete harmonic in $V$ (i.e., the value of $f$
is the average of $f$ at its nearest neighbors.
\item  $f \equiv 0$  on $\p V \setminus \{N + im:m=1,\ldots,N-1\}.$
\end{itemize}
Show the following:
\begin{enumerate}
\item  Show that there exist $N-1$ linearly independent functions
$f_1,\ldots,f_{N-1} \in {\mathcal
A}$ of the form
\[           f_q(j+ik) = \sinh(r_{q} j) \, \sin(s_q k),
\;\;\;\;\; q=1,\ldots,N-1 . \]
\item Show that every  $f \in {\mathcal A}$ is a linear combination
of $f_1,\ldots,f_{N-1}$. 
\item For any $m=1,\ldots,N-1$, find the unique $f \in {\mathcal A}$
for which
\[          f(N+im) = 1, \;\;\;\;\;\; f(N + im') = 0, m' \neq m. \]
\end{enumerate}
Use this to justify \eqref{may2.1} in Section \ref{lerwsec}
and show that for non-corner points $z$,
\[        C(D;z,w) = H_{\p D}(z,w) := \frac{d}{dn}
               H_D(z,w) , \]
where $d/dn$ denotes normal derivative and $H_D$
is the Poisson kernel.
\end{exercise}

 \lecture{Conformal mapping and Loewner equation}

\section{Important results about conformal maps}

Here we  summarize the basic facts
about conformal maps that one needs in order to use the Loewner
equation effectively.  A {\em domain} $D$ will be a connected subset
of $\C$.  We will call a holomorphic function
\[               f:D \longrightarrow D' \]
a {\em conformal transformation} if it is one-to-one and onto.  This
implies that $f'(z) \neq 0$ for all $z \in D$.  A domain
$D$ is {\em simply connected} if $\C \setminus D$ is connected. 
We let $\Disk = \{|z|<1\}$ denote the unit disk and $\Half =\{x+iy:
y > 0\}$ the upper half plane.  The
starting point is the Riemann mapping theorem.

\begin{theorem}[Riemann mapping theorem]  If $D$ is a proper,
simply connected domain in $\C$ and $z \in D$, then there is a
unique conformal transformation 
\[              f: \Disk \rightarrow D \]
with $f(0) = z, f'(0)  > 0$.
\end{theorem}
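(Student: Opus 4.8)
The plan is to prove the Riemann mapping theorem by the classical Montel/normal-families argument, producing the conformal map as the solution of an extremal problem among a suitable family of univalent functions.

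\textbf{Setup of the extremal family.} First I would let $\mathcal{F}$ denote the family of all holomorphic injections $g:D\to\Disk$ with $g(z)=0$ and $g'(z)>0$. The first step is to check $\mathcal{F}\neq\eset$: since $D$ is a proper subdomain, pick $a\in\C\setminus D$; if $\C\setminus D$ contains a neighborhood of $a$ one can use $1/(\zeta-a)$, and in general since $D$ is simply connected a holomorphic branch of $\sqrt{\zeta-a}$ exists on $D$, is injective, and omits a whole disk around some point $-w_0$ together with that point's negative, so composing with a Möbius map of $\Disk$ and a rotation lands us in $\mathcal{F}$. Next I would set $M=\sup_{g\in\mathcal{F}}g'(z)$; the Schwarz lemma applied to $g$ composed with a fixed member of $\mathcal{F}$ shows $M<\infty$, i.e. $M\in(0,\infty)$.

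\textbf{Extraction of the extremal map.} The second step is to take a sequence $g_n\in\mathcal{F}$ with $g_n'(z)\to M$. Since the $g_n$ are uniformly bounded by $1$, Montel's theorem gives a subsequence converging locally uniformly to some holomorphic $f:D\to\overline{\Disk}$ with $f(z)=0$ and $f'(z)=M>0$; in particular $f$ is nonconstant, so by Hurwitz's theorem $f$ is injective (a locally uniform limit of injective holomorphic functions is injective or constant) and by the open mapping theorem $f(D)\subset\Disk$. Hence $f\in\mathcal{F}$ and attains the supremum.

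\textbf{Surjectivity.} The third step — the one I expect to be the real obstacle — is to show $f(D)=\Disk$. Suppose not, and pick $\alpha\in\Disk\setminus f(D)$. Using that $f(D)$ is simply connected and omits $\alpha$, I would form the automorphism $\varphi_\alpha(w)=(w-\alpha)/(1-\bar\alpha w)$ of $\Disk$, so that $\varphi_\alpha\circ f$ omits $0$ and hence admits a holomorphic square root $h$ on $D$ (again using simple connectivity, and the fact that a zero-free holomorphic function on a simply connected domain has a holomorphic square root). Then compose $h$ with a further disk automorphism sending $h(z)$ to $0$ and a rotation to make the derivative at $z$ positive, obtaining $\tilde g\in\mathcal{F}$. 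Unwinding, $f=\psi\circ\tilde g$ where $\psi:\Disk\to\Disk$ is holomorphic with $\psi(0)=0$ and $\psi$ is \emph{not} a rotation (because the squaring map is genuinely two-to-one); the Schwarz lemma then gives $|\psi'(0)|<1$, forcing $\tilde g'(z)=f'(z)/|\psi'(0)|>M$, contradicting maximality. Therefore $f$ is onto, which gives existence.

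\textbf{Uniqueness.} Finally, if $f_1,f_2:\Disk\to D$ both satisfy $f_i(0)=z$, $f_i'(0)>0$, then $f_2^{-1}\circ f_1$ is an automorphism of $\Disk$ fixing $0$, hence a rotation $w\mapsto e^{i\theta}w$; the derivative condition $(f_2^{-1}\circ f_1)'(0)=f_1'(0)/f_2'(0)>0$ forces $e^{i\theta}=1$, so $f_1=f_2$. The main subtlety throughout is the repeated legitimate use of simple connectivity to extract holomorphic square roots (and, at the very start, to build the first element of $\mathcal{F}$); once that is granted, everything else is the standard normal-families machinery.
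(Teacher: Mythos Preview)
The paper does not actually prove the Riemann mapping theorem; it is stated as a classical background result at the start of Lecture~2, alongside the Koebe $(1/4)$-theorem and the distortion theorem, all without proof. So there is no ``paper's own proof'' to compare against.

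Your argument is the standard Montel/normal-families proof and is essentially correct. Two small remarks. First, your extremal family $\mathcal{F}$ consists of maps $g:D\to\Disk$, so what you produce in the existence step is a conformal bijection $f:D\to\Disk$; the map the theorem asks for is its inverse $f^{-1}:\Disk\to D$, which you should say explicitly (your uniqueness paragraph already works in the correct direction $\Disk\to D$). Second, the finiteness of $M=\sup_{g\in\mathcal{F}}g'(z)$ is more directly obtained from the Cauchy estimate (since $|g|\le 1$ on a disk $B(z,r)\subset D$, one has $|g'(z)|\le 1/r$) than from the Schwarz-lemma argument you sketch; the latter can be made to work but is a detour. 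Otherwise the square-root trick for nonemptiness, the Hurwitz step, and the Koebe square-root contradiction for surjectivity are all carried out correctly.
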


In other words, there is a one-to-one correspondence between
one-to-one, analytic functions $f$ on $\Disk$ with $f(0) = 0,
f'(0) > 0$ and simply connected proper subdomains of $\C$
containing the origin.  The  class of such functions
with $f'(0) = 1$.  is denoted
${\mathcal S}$.  Classical function theory devoted much time
to the study of ${\mathcal S}$.  The high point was the proof
by de Branges of the Bieberbach conjecture.

\begin{theorem} [Bieberbach conjecture, de Branges theorem]
If $f \in {\mathcal S}$ has power series expansion
\[               f(z) =  z + \sum_{n=2}^\infty a_n \, z^n , \]
then $|a_n| \leq n$ for each $n$.
\end{theorem}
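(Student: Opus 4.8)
The plan is to follow the route of Loewner and de Branges, which is natural here because the Loewner equation developed in the following sections is precisely the right instrument. I would first make two classical reductions. Since each coefficient $a_n$ is a continuous functional on ${\mathcal S}$ for the topology of locally uniform convergence, and the single-slit maps (those for which $\C\setminus f(\Disk)$ is a Jordan arc running to $\infty$) are dense in ${\mathcal S}$, it suffices to prove $|a_n|\le n$ for slit maps. Second, instead of attacking the $a_n$ directly I would pass to the \emph{logarithmic coefficients} $\gamma_k$ of $f$, defined by $\log(f(z)/z)=2\sum_{k\ge1}\gamma_k z^k$, and aim at Milin's conjecture
\[
 \sum_{k=1}^{n}(n+1-k)\Bigl(k|\gamma_k|^2-\tfrac1k\Bigr)\le 0,\qquad n=1,2,\ldots
\]
The Lebedev--Milin inequalities convert this into Robertson's conjecture $\sum_{k=0}^{n-1}|b_{2k+1}|^2\le n$ for odd maps $h(z)=z+b_3z^3+\cdots\in{\mathcal S}$ (applied to the square-root transform $h(z)=\sqrt{f(z^2)}$, which stays in ${\mathcal S}$), and a Cauchy--Schwarz estimate on $a_n=\sum_{j+k=n-1}b_{2j+1}b_{2k+1}$ then yields $|a_n|\le n$. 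All of this is elementary and can be cited; the real content is Milin's conjecture.

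For the core argument, fix $n$ and let $f$ be a slit map. Loewner's theorem embeds $f$ in a chain $f(z,t)=e^{-t}z+\cdots$, $t\ge0$, with $f(\cdot,0)=f$ and $e^t f(\cdot,t)\to\mathrm{id}$ locally uniformly, solving the radial Loewner PDE driven by a continuous unimodular function $\kappa(t)$. Writing the logarithmic coefficients of $e^t f(\cdot,t)$ as $\gamma_k(t)$, differentiating the PDE gives an explicit first-order system $\dot\gamma_k(t)=-k\,\gamma_k(t)+(\text{forcing terms built from }\kappa(t)^j,\ \gamma_j(t),\ j<k)$, with $\gamma_k(0)$ the logarithmic coefficients of $f$ and $\gamma_k(t)\to0$ as $t\to\infty$. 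Following de Branges I would then introduce weight functions $\tau_1(t),\ldots,\tau_n(t)$ solving a companion linear system with terminal data $\tau_k(\infty)=0$, engineered so that, for
\[
 \Phi(t)=\sum_{k=1}^{n}\Bigl(k|\gamma_k(t)|^2-\tfrac1k\Bigr)\tau_k(t),
\]
the derivative $\dot\Phi(t)$ collapses to a sum of squares $|\,k\gamma_k(t)-\cdots\,|^2$ multiplied by \emph{nonnegative} coefficients, so that $\dot\Phi(t)\le0$. Since $\gamma_k(t)\to0$ and $\tau_k(t)\to0$, one gets $\Phi(\infty)=0$, hence $\Phi(0)\le0$; and the terminal conditions force $\tau_k(0)=n+1-k$, so $\Phi(0)\le0$ is exactly Milin's conjecture for $f$.

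The step I expect to be the genuine obstacle is checking that the de Branges weights $\tau_k(t)$ are \emph{nonnegative} on $[0,\infty)$. Solving their system explicitly expresses $\tau_k$ through Jacobi polynomials, and nonnegativity becomes the Askey--Gasper inequality --- that a certain sum $\sum_k c_k\,P_k^{(\alpha,0)}(x)\ge0$ on $[-1,1]$. This is a bona fide special-function estimate and is where the proof stops being soft; it can be established by a delicate but elementary induction (as in the Gasper and FitzGerald--Pommerenke expositions). Worth keeping in reserve is Weinstein's later proof, which runs on the same Loewner-equation skeleton but replaces the Jacobi-polynomial bookkeeping by a generating-function identity whose required positivity follows from an addition theorem for Legendre polynomials; only this closing positivity argument differs.
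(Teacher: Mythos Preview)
The paper does not prove this theorem at all; it is stated without proof as classical background for the Loewner equation, alongside the Koebe $(1/4)$-theorem and the distortion theorem. Your outline is a faithful sketch of the actual de Branges argument (reduction to slit maps, passage to Milin's conjecture via Lebedev--Milin, the Loewner-chain computation with de Branges weights, and Askey--Gasper for the positivity), so there is nothing in the paper to compare it against.
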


Although we do not need such a deep result, we do use some
facts that were developed to try to solve the conjecture.  

\begin{theorem}  [Koebe $(1/4)$-theorem]
If $f \in {\mathcal S},$  then $(1/4) \, \Disk \subset
f(\Disk)$.
\end{theorem}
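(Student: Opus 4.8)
The plan is to reduce the statement to the single Bieberbach coefficient inequality $|a_2|\le 2$ for functions in ${\mathcal S}$ (a special case of the de Branges theorem stated above, but one admitting an elementary proof) and then to exploit that inequality twice by means of an ``omitted value'' transformation. So: let $f(z)=z+a_2 z^2 + \cdots \in {\mathcal S}$, and suppose $w\in\C$ is a value \emph{not} taken by $f$, i.e. $w\notin f(\Disk)$; note $w\neq 0$ since $f(0)=0$. I want to show $|w|\ge 1/4$, which gives $(1/4)\,\Disk\subset f(\Disk)$.

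Consider
\[ g(z)=\frac{w\,f(z)}{w-f(z)} . \]
This is holomorphic on $\Disk$ because $f$ never equals $w$ there; it is one-to-one because $g=\phi\circ f$ with $\phi(\zeta)=w\zeta/(w-\zeta)$ a Möbius transformation (hence injective on the image of $f$, which misses $\zeta=w$) and $f$ injective; and $g(0)=0$. Expanding, $g=f\cdot(1-f/w)^{-1}=f+f^2/w+\cdots=z+\bigl(a_2+\tfrac1w\bigr)z^2+\cdots$, so $g'(0)=1$ and $g\in{\mathcal S}$. Applying $|a_2|\le 2$ to both $f$ and $g$ gives $|a_2|\le 2$ and $\bigl|a_2+\tfrac1w\bigr|\le 2$, hence by the triangle inequality $\bigl|\tfrac1w\bigr|\le\bigl|a_2+\tfrac1w\bigr|+|a_2|\le 4$, i.e. $|w|\ge 1/4$, as desired.

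It remains to justify $|a_2|\le 2$ without invoking de Branges, and this is where the real work lies. The route I would take is the classical area theorem: if $h(z)=z+b_0+b_1 z^{-1}+b_2 z^{-2}+\cdots$ is univalent on $\{|z|>1\}$, then $\sum_{n\ge 1} n|b_n|^2\le 1$; this follows by computing the area of the complement of $h(\{|z|>1\})$ via Green's theorem on circles $|z|=r$ and letting $r\to 1^+$, using that an area is nonnegative. In particular $|b_1|\le 1$. To feed $f\in{\mathcal S}$ into this, pass to the odd ``square-root transform'' $\psi(z)=\sqrt{f(z^2)}$: since $f(z^2)/z^2=1+a_2 z^2+\cdots$ is nonvanishing on $\Disk$, it has a single-valued holomorphic square root equal to $1$ at the origin, so $\psi(z)=z+\tfrac{a_2}{2}z^3+\cdots$ is a well-defined odd function, and it is univalent: $\psi(z_1)=\psi(z_2)$ forces $f(z_1^2)=f(z_2^2)$, hence $z_2=\pm z_1$, and the minus sign is excluded because then oddness gives $\psi(z_1)=0$, forcing $z_1=0$. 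Then $h(z):=1/\psi(1/z)=z-\tfrac{a_2}{2}z^{-1}+\cdots$ is univalent on $\{|z|>1\}$, so the area theorem yields $|a_2/2|\le 1$, i.e. $|a_2|\le 2$.

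The only genuinely delicate points are the two verifications just flagged — that the square-root transform is single-valued and univalent, and that the area integral for $h$ really produces a nonnegative quantity in the limit $r\to 1^+$; everything else is bookkeeping with power series. Finally I would note the constant is sharp: the Koebe function $k(z)=z/(1-z)^2=\sum_{n\ge 1} n z^n$ lies in ${\mathcal S}$, has $a_2=2$, and maps $\Disk$ conformally onto $\C\setminus(-\infty,-1/4]$, so $-1/4$ is an omitted value and $(1/4)\,\Disk$ cannot be enlarged.
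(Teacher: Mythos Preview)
Your argument is correct and is the standard classical proof: area theorem $\Rightarrow$ Bieberbach's bound $|a_2|\le 2$ $\Rightarrow$ the omitted-value transformation $g(z)=wf(z)/(w-f(z))$ forces $|w|\ge 1/4$. The steps you flag as delicate (single-valuedness and univalence of the square-root transform, nonnegativity of the area) are handled correctly.

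However, note that the paper does not actually prove this theorem. It is stated in the section on ``Important results about conformal maps'' as one of several background facts quoted without proof, to be used later (chiefly for the distance-to-boundary estimates via the conformal radius $\distsub_t$). So there is no ``paper's own proof'' to compare against; you have supplied a complete proof where the paper simply cites the result.
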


\begin{theorem} [Distortion theorem]   If $f \in {\mathcal S}$,
and $|z| \leq r < 1$, then
\begin{equation}  \label{apr17.1}
  \frac{1-r}{(1+r)^3} \leq |f'(z)| \leq \frac{1+r}{(1-r)^3}. 
\end{equation}
\end{theorem}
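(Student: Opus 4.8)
The plan is to derive the distortion bounds by integrating a differential inequality for $\log f'(z)$ along rays from the origin. The starting point is the standard growth estimate for the logarithmic derivative: if $f \in \mathcal{S}$, then for $|z| = r < 1$ one has
\[
\left| \frac{z f''(z)}{f'(z)} - \frac{2r^2}{1-r^2} \right| \leq \frac{4r}{1-r^2}.
\]
This estimate itself follows from applying the coefficient bound $|a_2| \leq 2$ (a consequence of the area theorem / Bieberbach for $n=2$, which we may assume as classical, or alternatively the Koebe theorem lets us bootstrap it) to the Koebe-transformed function $g_w(z) = \frac{f\big(\frac{z+w}{1+\bar w z}\big) - f(w)}{(1-|w|^2) f'(w)}$, which again lies in $\mathcal{S}$; computing $g_w''(0)$ and bounding its modulus by $4$ gives exactly the displayed inequality after simplification. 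I would present this inequality as the key lemma, sketching the Koebe-transform computation but not grinding through the algebra.

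The second step is to convert this into a bound on $\frac{d}{dr} \log |f'(r e^{i\theta})|$. Fix $\theta$ and write $z = r e^{i\theta}$. Then $\frac{\partial}{\partial r} \log f'(z) = e^{i\theta} \frac{f''(z)}{f'(z)} = \frac{1}{r}\cdot \frac{z f''(z)}{f'(z)}$, so taking real parts and using the lemma,
\[
\frac{2r - 4}{1 - r^2} \leq \frac{\partial}{\partial r} \log |f'(r e^{i\theta})| \leq \frac{2r + 4}{1 - r^2}.
\]
The third step is simply to integrate both sides from $0$ to $r$, using $f'(0) = 1$ so that $\log|f'(0)| = 0$. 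The antiderivatives are elementary: $\int_0^r \frac{2s \pm 4}{1-s^2}\, ds$ evaluates to $-\log(1-r^2) \mp \log\frac{1+r}{1-r} = \log\frac{1 \mp r}{(1 \pm r)^3}$. Exponentiating yields $\frac{1-r}{(1+r)^3} \leq |f'(re^{i\theta})| \leq \frac{1+r}{(1-r)^3}$, which is exactly \eqref{apr17.1}, and since $\theta$ was arbitrary this holds for all $|z| \leq r < 1$ (the bounds are monotone in $r$, so the estimate at radius $|z|$ is subsumed).

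The main obstacle is really the first step — establishing the logarithmic-derivative estimate — since everything after that is routine calculus. The cleanest route is the Koebe automorphism trick together with $|a_2| \leq 2$; one must be slightly careful to record that $g_w \in \mathcal{S}$ (it is univalent on $\Disk$, fixes $0$, and has derivative $1$ at $0$ by construction) and to correctly extract the coefficient of $z^2$ in its expansion, which involves the chain rule applied to the disk automorphism. I would state the $|a_2|\le 2$ bound as a known fact (it is the case $n=2$ of the de Branges theorem already quoted, and has an elementary proof via the area theorem), so that the exposition stays self-contained modulo results already in hand.
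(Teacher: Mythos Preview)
Your argument is the standard textbook proof and is correct in substance. However, the paper does not actually prove the Distortion Theorem: it is stated without proof in the section ``Important results about conformal maps,'' where the author is merely summarizing classical background (Riemann mapping, Bieberbach/de~Branges, Koebe $(1/4)$, distortion, Harnack, Beurling) before moving on to the Loewner equation. So there is no proof in the paper to compare against; your write-up would supply what the paper deliberately omits.

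One small arithmetic slip: in your integration step you write that $\int_0^r \frac{2s \pm 4}{1-s^2}\,ds$ equals $-\log(1-r^2) \mp \log\frac{1+r}{1-r}$, but the second term should carry a factor of $2$, i.e.\ $-\log(1-r^2) \pm 2\log\frac{1+r}{1-r}$. Your final exponentiated bounds $\frac{1\mp r}{(1\pm r)^3}$ are nonetheless correct, so this is a typo in the intermediate line rather than a gap in the argument.
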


The particular values $1/4$ and those in \eqref{apr17.1} are
not as important as the fact that there is some uniform bound
over all $f \in {\mathcal S}$.  These theorems are important
for studying conformal maps even when the domains are not
simply connected.  Suppose $f: D_1 \rightarrow D_2$ is
a conformal transformation with $f(0) = 0$.  Let
$d_j = \dist(0,\p D_j)$.  If 
\[                \tilde f(z) = \frac{f(d_1z)}{d_1 \, f'(0)}, \]
then $\tilde f \in {\mathcal S}$.  Therefore, $(1/4)
\, \Disk \subset \tilde f(\Disk)$ which implies $d_2
 \geq |f'(0)| \, d_1/4$.  By interchanging the roles of
$D_1,D_2$, we get the corollary
\[                  \frac{|f'(0)|}{4} \leq \frac{d_2}{d_1}
    \leq 4 \, |f'(0)| . \]

There is a similar result about harmonic functions that is
simple but worth emphasizing.  We will state it for the gradient
but there are similar results for higher derivatives. These
are just corollaries of the Poisson integral formula,
\[          u(z) = \int_{\p \Disk} u(w) \, H_\Disk(z,w) \, |dw|, \]
where 
\[                H_\Disk(z,w) =  \frac{1}{2\pi} \, \frac{1-|z|^2}
       {|z-w|^2}  \]
is the Poisson kernel in $\Disk$
and $|dw|$ means integration with respect
to arc length.

\begin{proposition}  For every $r < 1$, there exists $c_r <
\infty$ such that the following holds for all $|z| \leq r$.

\begin{itemize}

\item  (Harnack inequality)
If  $u: \Disk \rightarrow (0,\infty)$
is harmonic, then
\[          c_r^{-1} \, u(0) \leq u(z) \leq c_r \, u(0).\]

\item (Derivative estimates)
If  $u: \Disk \rightarrow \R$ is harmonic, then
\[          |\nabla u(z)| \leq c_r \, \|u\|_\infty. \]
   
\end{itemize}
In particular, there is a $c$ such that
if $u: D \rightarrow \R$ is harmonic, then
\[              |\nabla u(z)| \leq   \frac{c}{\dist(z,\p D)}
  \,  \|u\|_\infty. \]
\end{proposition}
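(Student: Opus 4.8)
The plan is to read all three statements off the Poisson integral formula $u(z) = \int_{\partial\Disk} u(w)\,H_\Disk(z,w)\,|dw|$ together with the explicit form of the Poisson kernel. One preliminary point: a harmonic function on $\Disk$ need not extend continuously to $\partial\Disk$, so I would first fix $\rho\in(r,1)$, apply every argument to $u$ restricted to $\rho\Disk$ (where the Poisson representation is valid with the kernel of $\rho\Disk$), and then let $\rho\uparrow 1$. Rescaling by $w\mapsto\rho^{-1}w$ reduces this to the unit disk with $r$ replaced by $r/\rho<1$, so I will simply treat $u$ as if it were continuous up to $\partial\Disk$.

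For the Harnack inequality, the key observation is that for $|z|\le r$ and $|w|=1$ one has $1-r\le|z-w|\le 1+r$ and $1-r^2\le 1-|z|^2\le 1$, whence
\[ \frac{1}{2\pi}\cdot\frac{1-r}{1+r}\;\le\; H_\Disk(z,w)\;\le\;\frac{1}{2\pi}\cdot\frac{1}{(1-r)^2}. \]
Since $u>0$ and $\int_{\partial\Disk}u(w)\,|dw| = 2\pi\,u(0)$ (the $z=0$ case of the Poisson formula), integrating these bounds against $u(w)\,|dw|$ gives $\frac{1-r}{1+r}\,u(0)\le u(z)\le(1-r)^{-2}\,u(0)$, so the assertion holds with $c_r=\max\{(1+r)/(1-r),\,(1-r)^{-2}\}$.

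For the derivative estimate I would differentiate the Poisson formula under the integral sign. This is legitimate because, for $w$ fixed on $\partial\Disk$, the map $z\mapsto H_\Disk(z,w)$ is smooth on $\{|z|\le r\}$ and $|\nabla_z H_\Disk(z,w)|$ is bounded there by a constant $c_r'$ depending only on $r$ (it is continuous on the compact set $\{|z|\le r\}\times\partial\Disk$). Hence
\[ |\nabla u(z)|\;\le\;\int_{\partial\Disk}|u(w)|\,|\nabla_z H_\Disk(z,w)|\,|dw|\;\le\;2\pi\,c_r'\,\|u\|_\infty, \]
which is the claimed bound (enlarging $c_r$ if needed so that one constant serves for both bullets). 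Finally, for the global statement, let $u:D\to\R$ be harmonic, $z\in D$, and $d=\dist(z,\partial D)>0$, so $B(z,d)\subset D$. Set $v(\zeta)=u(z+d\zeta)$ for $\zeta\in\Disk$; then $v$ is harmonic on $\Disk$ (precomposition with the affine map $\zeta\mapsto z+d\zeta$) and $\|v\|_\infty\le\|u\|_\infty$. Applying the derivative estimate at the center (the case $r=0$, constant $c_0$) gives $|\nabla v(0)|\le c_0\,\|v\|_\infty$, and since $\nabla v(0)=d\,\nabla u(z)$ by the chain rule, we get $|\nabla u(z)|\le(c_0/d)\,\|u\|_\infty$, i.e.\ the claim with $c=c_0$.

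There is no serious obstacle here: the only genuine point of care is the boundary-regularity issue in the first paragraph (routing through the smaller disk $\rho\Disk$ and passing to the limit), and, more mildly, the standard justification of differentiation under the integral sign. Both interior estimates are "the pointwise bound on $H_\Disk$, respectively $\nabla_z H_\Disk$, integrated against $|u|\,|dw|$," and the general-domain statement is pure rescaling, of exactly the same flavor as the Koebe-type normalization $\tilde f(z)=f(d_1 z)/(d_1 f'(0))$ used earlier in this section.
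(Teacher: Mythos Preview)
Your proof is correct and follows exactly the route the paper indicates: the paper does not write out a proof but simply remarks that ``these are just corollaries of the Poisson integral formula,'' and you have filled in precisely those details (pointwise bounds on $H_\Disk$ and $\nabla_z H_\Disk$, followed by rescaling for the general-domain statement). Your care with the boundary-regularity issue via the $\rho\Disk$ approximation is a nice touch that the paper omits.
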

 
The next theorem is a corollary of a stronger theorem known
as the Beurling projection theorem.  However, the weaker version
here is what is used most often in applications (and also has
discrete analogues).  

\begin{theorem}  [Beurling estimate] There is a $c < \infty$ such
that the following is true.
 Suppose $\gamma:[0,1] \rightarrow
\C$ is a continuous curve with $|\gamma(0)| = r < 1 = |\gamma(1)|$.
Let $B_t$ be a complex Brownian motion starting at the origin
and let $\tau = \tau_\Disk = \inf\{t: |B_t| = 1\} . $  Then,
\[  \Prob\{B[0,\tau] \cap \gamma[0,1] = \emptyset \}
           \leq c \, r^{1/2} . \]
\end{theorem}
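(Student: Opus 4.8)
The plan is to reduce the statement to the case where $\gamma$ is a radial slit, using the Beurling projection theorem (the ``stronger theorem'' mentioned above), and then to treat the slit by an explicit conformal map.

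\emph{Reduction.} Since $B$ is killed on $\partial\Disk$, only the portion of $\gamma$ lying in $\overline\Disk$ that joins the two circles is relevant. Put $t^{*}=\inf\{t:|\gamma(t)|=1\}$; then $A:=\gamma[0,t^{*}]$ is a compact connected subset of $\overline\Disk$ that meets both $\{|z|=r\}$ and $\{|z|=1\}$, and $\{B[0,\tau]\cap\gamma[0,1]=\emptyset\}\subseteq\{B[0,\tau]\cap A=\emptyset\}$. Because $A$ is connected and touches both circles, its circular projection $A^{*}=\{|z|:z\in A\}$, viewed as a subset of $[0,1]$ on the positive real axis, contains the segment $[r,1]$. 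The Beurling projection theorem says that replacing a compact set by its circular projection does not decrease the probability that a Brownian motion from the origin avoids it before leaving $\Disk$; combining this with the obvious monotonicity in the target set,
\[
\Prob\{B[0,\tau]\cap\gamma[0,1]=\emptyset\}\le\Prob\{B[0,\tau]\cap A=\emptyset\}\le\Prob\{B[0,\tau]\cap A^{*}=\emptyset\}\le\Prob\{B[0,\tau]\cap[r,1]=\emptyset\}=:p^{*}(r).
\]

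\emph{The slit.} It remains to bound $p^{*}(r)$, the probability that Brownian motion from $0$ exits $\Disk$ without hitting the radial slit $[r,1]$. I would uniformize $\Disk\setminus[r,1]$ in two steps: the M\"obius map $z\mapsto\frac{1-z}{1+z}$ carries $\Disk$ onto the right half-plane, $0$ to $1$, $\partial\Disk$ onto $i\R$, and $[r,1]$ onto the real segment $[0,\beta]$ with $\beta=\frac{1-r}{1+r}$; then $w\mapsto\sqrt{w^{2}-\beta^{2}}$ straightens the slit half-plane onto the right half-plane, taking $i\R$ onto $\{it:|t|\ge\beta\}$ and the point $1$ to $\sqrt{1-\beta^{2}}$. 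By conformal invariance of harmonic measure, $p^{*}(r)$ equals the harmonic measure of $\{it:|t|\ge\beta\}$ from the real point $\sqrt{1-\beta^{2}}$ in the right half-plane, which the half-plane Poisson kernel evaluates to $\frac{2}{\pi}\arctan\!\bigl(\sqrt{1-\beta^{2}}/\beta\bigr)$. Using $1-\beta^{2}=4r/(1+r)^{2}$ and the double-angle identity, this simplifies to $p^{*}(r)=\frac{4}{\pi}\arctan\sqrt r\le\frac{4}{\pi}\sqrt r$, which proves the estimate with $c=4/\pi$.

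\emph{Where the work is.} The substantive ingredient is the projection theorem itself (proved by circular symmetrization, e.g.\ via polarization of harmonic measure); everything after the reduction is routine conformal bookkeeping. If one prefers not to invoke it, a soft argument still gives a power bound: cover the annulus $\{r\le|z|\le1\}$ by $n\asymp\log_{2}(1/r)$ dyadic annuli, note $\gamma$ contains a crossing of each, show by a Harnack/compactness argument that a Brownian motion crossing a dyadic annulus meets any given crossing of it with probability at least an absolute $\rho>0$, and telescope via the strong Markov property at the successive hitting times of the circles $\{|z|=2^{-k}\}$ to obtain $\Prob\{B[0,\tau]\cap\gamma[0,1]=\emptyset\}\le(1-\rho)^{n}\le C\,r^{\eta}$ with $\eta=\log_{2}\frac1{1-\rho}>0$. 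Improving $\eta$ up to the sharp value $1/2$ is precisely what the extremal slit comparison delivers.
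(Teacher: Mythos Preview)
Your proposal is correct and follows precisely the route the paper indicates: the paper does not give a proof but states that the estimate is a corollary of the Beurling projection theorem and defers the slit computation to Exercise~\ref{may3.exer2}. You carry out both steps in full, and your explicit formula $p^{*}(r)=\frac{4}{\pi}\arctan\sqrt{r}$ gives the constant $4/\pi$ that the exercise asks for as the limit $\lim_{\epsilon\to 0}\epsilon^{-1/2}\,\Prob\{B[0,\tau]\cap[\epsilon,1]=\emptyset\}$; the additional soft annulus-crossing argument is a nice bonus the paper does not mention.
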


\begin{remark}  The estimate is sharp when $\gamma$ is a line
segment from $r$ to $1$.  See Exercise \ref{may3.exer2}.
\end{remark}

\section{Half-plane capacity}   \label{hcapsec}

If $K$  is a bounded, relatively closed
subset of $\Half$, let $D = D_K = \Half \setminus K$ and
\[  \phi_D(z) = \Im(z) - \E^z[\Im(B_{\tau_D})] , \]
where $B_t$ is a standard complex Brownian motion and $\tau_D
= \inf\{t: B_t \not \in D\}.$  Then
$\phi_D$ is a positive harmonic function on $D$ that
vanishes on $\p D$ \footnote{Actually we can only assert that
it vanishes at the
{\em regular} points of the boundary.  We will not define
regular here, but if all the connected components
of $\p D$ are larger than singletons then all points on $\p D$
are regular.} and such that
\[   \phi_D(z) = \Im(z) + O(|z|^{-1}) , \;\;\;\;  z \rightarrow
\infty . \]
The {\em half-plane capacity (from infinity)} of $K$ is
defined by
\[        \phi_D(z) = \Im\left(z  +  \frac {\hcap(K)} z\right )
  + o(1) , \;\;\;\;
  z \rightarrow \infty , \]
or, in other words,
\[   \hcap(K) = \lim_{z \rightarrow \infty} 
                       \Im(-1/z) \, \E^z[\Im(B_{\tau_D})] . \]
The existence of the limit is included in the proof
of the following
lemma. Let
$\Disk_+ = \Disk \cap \Half$ denote the upper half disk.

\begin{lemma}
$\;$

\begin{itemize}

\item
If $r > 0$,
\[   \hcap(rK) = r^2\, \hcap(K), \;\;\;\;\;
  \hcap(r + K) = \hcap(K) . \]

\item
If $K \subset \overline {\Disk_+}$, then
\begin{equation}  \label{may3.1}
  \hcap(K) = \int_0^\pi \E^{e^{i\theta}}[\Im(B_{\tau_{D}})]
  \, \left(\frac 2\pi \, \sin \theta\right) \, d\theta. 
\end{equation}

\end{itemize}
\end{lemma}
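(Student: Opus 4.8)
The plan is to prove the two scaling identities first, since they follow directly from the probabilistic definition of $\hcap$, and then to establish the integral formula \eqref{may3.1} by representing $\E^z[\Im(B_{\tau_D})]$ via the Poisson kernel on the large half-disk and passing to the limit.

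For the scaling relations, I would start from the characterization
\[
\hcap(K) = \lim_{z\to\infty} \Im(-1/z)\,\E^z[\Im(B_{\tau_{D_K}})],
\]
and use conformal/scaling invariance of complex Brownian motion. For $rK$: if $B$ starts at $z$ and $\tau = \tau_{D_{rK}}$, then $\tilde B_t := r^{-1} B_{r^2 t}$ is a Brownian motion started at $z/r$ and exits $D_K$ at time $r^{-2}\tau$, so $\E^z[\Im(B_{\tau_{D_{rK}}})] = r\,\E^{z/r}[\Im(B_{\tau_{D_K}})]$. Substituting $w = z/r$ and using $\Im(-1/z) = r^{-1}\Im(-1/w) \cdot$ (adjusting carefully: $\Im(-1/(rw)) = r^{-1}\Im(-1/w)$) gives an extra factor $r^{-1}\cdot r \cdot r = r^2$... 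I would track the powers of $r$ cleanly: the prefactor contributes $r^{-1}$, the expectation contributes $r$, and the change of starting point $z \to z/r$ in the limit contributes nothing extra, so one must recheck — in fact the cleanest route is to note $\phi_{D_{rK}}(z) = r\,\phi_{D_K}(z/r)$ and then read off the coefficient of $\Im(1/z)$ in the expansion at infinity, which scales by $r\cdot r = r^2$. Translation invariance is even simpler: $\phi_{D_{r+K}}(z) = \phi_{D_K}(z-r)$, and shifting $z\to z-r$ does not change the coefficient of $\Im(1/z)$ in the asymptotic expansion.

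For \eqref{may3.1}, assume $K \subset \overline{\Disk_+}$ and let $D = \Half\setminus K$. The idea is to apply the optional stopping / strong Markov property to the bounded domain $\Disk_R^+ = \{|z|<R\}\cap\Half \setminus K$ for large $R$. Since $\Im(B_t)$ is a bounded (on the relevant region) martingale and $\Im \equiv 0$ on $\R$, for $z$ in the annular region between the unit half-disk and radius $R$ I would condition on the first hitting of the semicircle $|w|=1$ (or exit through $\R$, which contributes $0$): by the strong Markov property,
\[
\E^z[\Im(B_{\tau_D})] = \int_0^\pi H(z,e^{i\theta})\,\E^{e^{i\theta}}[\Im(B_{\tau_D})]\,d\theta,
\]
where $H(z,e^{i\theta})$ is the density (w.r.t.\ $d\theta$) of the exit position on the upper unit semicircle for Brownian motion in $\Half$ started at $z$, killed on $\R$. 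As $z\to\infty$, this exit density has the asymptotics $H(z,e^{i\theta}) \sim \Im(-1/z)^{-1}\cdot(\tfrac2\pi\sin\theta)\cdot\Im(-1/z) = $ — more precisely, the half-plane Poisson-type kernel from $z$ to the semicircle satisfies $H(z,e^{i\theta}) = \frac{2}{\pi}\,\Im(z)\,|z|^{-2}\sin\theta\,(1+o(1)) = \frac{2}{\pi}\Im(-1/z)\sin\theta\,(1+o(1))$ as $z\to\infty$. Multiplying by $\Im(-1/z)^{-1}$... rather, multiplying $\E^z[\Im(B_{\tau_D})]$ by $\Im(-1/z)$ and letting $z\to\infty$: the factor $\Im(-1/z)\,H(z,e^{i\theta}) \to \frac{2}{\pi}\sin\theta$ uniformly in $\theta$, and $\E^{e^{i\theta}}[\Im(B_{\tau_D})]$ is bounded (dominated by $\Im(e^{i\theta}) = \sin\theta$, since $\Im(B_{t\wedge\tau_D})$ is a supermartingale bounded below by $0$), so dominated convergence yields \eqref{may3.1}, and simultaneously proves the limit defining $\hcap$ exists.

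The main obstacle is getting the asymptotics of the exit kernel $H(z,e^{i\theta})$ right and uniform in $\theta\in[0,\pi]$ as $z\to\infty$ — i.e.\ showing that Brownian motion started far out in $\Half$, conditioned to hit the unit semicircle before $\R$, lands with density proportional to $\sin\theta$ up to $(1+o(1))$, with the constant $\frac2\pi$ pinned down. This can be done by an explicit computation: the harmonic function on $\Half\setminus\overline{\Disk_+}$ that is $1$ on a small arc of the semicircle and $0$ on $\R$ and the rest of the semicircle behaves like $c\,\Im(-1/z)$ at infinity, and integrating the boundary data recovers the $\sin\theta$ weight; alternatively one maps $\Half\setminus\overline{\Disk_+}$ conformally and reads off the kernel. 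Everything else — optional stopping justification (using that $K$ is bounded so $\Im(B_{t\wedge\tau_D})$ is bounded on compact regions and $\tau_D<\infty$ a.s.), the supermartingale bound $\E^{e^{i\theta}}[\Im(B_{\tau_D})]\le\sin\theta$, and the dominated convergence step — is routine.
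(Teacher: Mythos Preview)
Your approach is essentially identical to the paper's: the scaling rules come from $\phi_{D_{rK}}(z) = r\,\phi_{D_K}(z/r)$ and $\phi_{D_{r+K}}(z) = \phi_{D_K}(z-r)$, and the integral formula comes from the strong Markov property at the first hitting of the unit semicircle together with the asymptotic hitting density $H(z,e^{i\theta}) \sim \tfrac{2}{\pi}\,\Im(-1/z)\,\sin\theta$. The only slip is in the normalization at the end: you should \emph{divide} $\E^z[\Im(B_{\tau_D})]$ by $\Im(-1/z)$ (equivalently, multiply by $\Im(-1/z)^{-1}$), not multiply --- your first instinct was right, and the paper's displayed definition of $\hcap$ contains a typo (its own proof uses $\Im(-1/z)^{-1}$).
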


\begin{proof} (sketch) 
The scaling rule follows immediately from the scaling rule
$\phi_D(z) = \phi_{rD}(rz)/r$, and the translation invariance
by the rule $\phi_{D+r}(r+z) = \phi_D(z)$.  
 The last equality follows by taking a
Brownian motion starting at $z$ and considering the hitting
distribution of $\R \cup \overline{\Disk_+}$, restricted
to the unit circle.  Then it can be shown  
(Exercise \ref{may3.exer1})
 that as $z \rightarrow
\infty$, the hitting density of the unit circle is given by
\begin{equation}  \label{may3.3}
    \frac{2}{\pi}  \,  \Im(-1/z)\, \sin \theta  \, [1 + O(|z|^{-1})],
\;\;\;\;  z\rightarrow \infty . 
\end{equation}
In other words,
 \[\Im(-1/z)^{-1}\,  \E^z[\Im(B_{\tau_D})] =
     \left[\int_0^\pi \E^{e^{i\theta}}[\Im(B_{\tau_{D}})]
  \, \left(\frac 2\pi \, \sin \theta\right) \, d\theta\right]
 \, [1 + O(|z|^{-1})]. \]

\end{proof}

\begin{remark} From the lemma one can conclude
immediately that $\hcap(\overline {\Disk_+})
 = 1$.   One can also see this by noting that
the function $f(z) = z + z^{-1}$ maps
$\Half \setminus \overline {\Disk_+}$ conformally
onto $\Half$.  If $K_1 \subset K$, then $\hcap(K_1)
\leq \hcap(K)$.  In particular, we get the
estimate
\[             \hcap(K) \leq \rad(K)^2 , \]
where $ \rad(K) = \sup\{|z|:z \in K\}$.  There is
no corresponding bound in the opposite direction even
for simply connected $D$.  In fact, one can check
that there is a $c$ such that for all $K$
\[      
 \hcap(K) \leq c \,\rad(K) \,  \sup\{\Im(z) ; z \in K\}.\]
\end{remark}

\begin{remark}
The proof of \eqref{may3.1}
also gives an error estimate. There is
a $c < \infty$ such that for all  $K \subset \overline
{\Disk_+}$, and $|z| \geq 2$,
\begin{equation} \label{may3.4}
      \left| \,  \Im(-1/z)^{-1} \, \E^z[\Im(B_{\tau_{D_K}})]
                 - \hcap(K)\, \right| \leq \frac{c \, \hcap(K)}
  {|z|}. 
\end{equation}
By scaling, this implies that for any $K$ and any $|z|
\geq 2 \rad(K)$,
\begin{equation} \label{oct17.1}
      \left| \,  \Im(-1/z)^{-1} \, \E^z[\Im(B_{\tau_{D_K}})]
                 - \hcap(K)\, \right| \leq \frac{c \, \hcap(K)
  \, \rad(K)}{|z|}. 
\end{equation}
Note that the error is of order $\hcap(K) \, \rad(K)$ rather than
$\hcap(K)^2$.  As mentioned above, $\rad(K)$ can be much larger
than $\hcap(K)$.
\end{remark}
 
\begin{remark}
In much of the literature on SLE, the half-plane capacity is called
just the capacity and denoted ${\rm cap}$.  However, this can lead
to confusion because there are other natural definitions of capacities
of sets in $\Half$.
\end{remark}

If $D = \Half \setminus
K$ is simply connected, then $\phi_D$ is the imaginary part
of a conformal transformation $g_D: D \rightarrow \Half$
\[     g_D  = \Re[g_D] + i \phi_D . \]
We will also write this as $g_K$. 
This defines $\Re[g_D]$ up to an additive constant.  We define
$g_D$ uniquely by specifying that the additive constant should
be ``0 at infinity'', i.e., so that $g_D$ has the expansion
\[             g_D(z) = z + \frac{\hcap(K)}{z}
    + O(|z|^{-2}) , \;\;\;\;\;  z \rightarrow \infty . \]
There is a error estimate similar to (\ref{oct17.1}),
\begin{equation}  \label{oct17.2}
   \left|g_D(z) - \left( z - \frac{\hcap(K)}{z}
  \right) \right| \leq 
\frac{c \, \hcap(K)
  \, \rad(K)}{|z|^2}, \;\;\;\;  |z| \geq 2 \rad(K). 
\end{equation}

\begin{figure}[htb]
\begin{center}
\epsfig{file=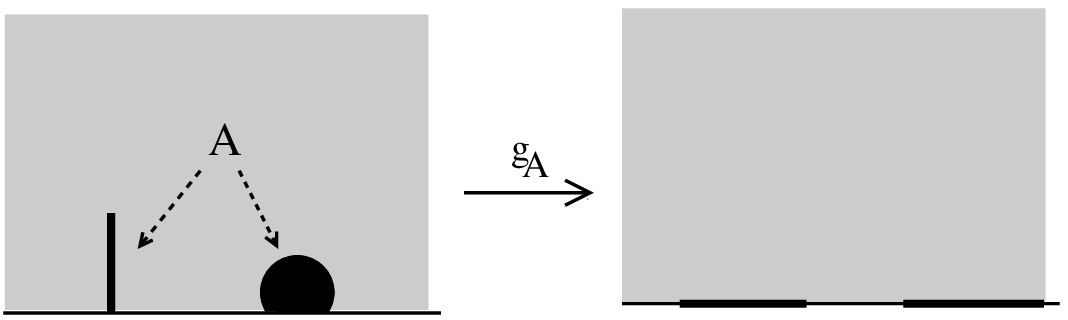}
\caption{The conformal transformation $g_A = g_{\Half \setminus A}$.}
\end{center}
\end{figure}

\section{Loewner equation} The last inequality implies
  a version of the
Loewner equation.  

\begin{proposition}
Suppose for each $t > 0$ there is a set $K_t$
as above.  Suppose $\dot a(0) = \p_t[\hcap(K_t)]|_{t=0+}$
exists and $r_t:= \rad(K_t) \rightarrow 0$ as $t \rightarrow 0+$.
Let $\phi_t = \phi_{K_t}$.  Then for fixed $z \in \Half$,
as $t \rightarrow 0+$,
\[          \phi_{K_t}(z) =  \Im(z) + {\dot a(0) \, t} \, \Im(1/z) 
      +
                O(tr_t), \]
i.e.,
\[            \p_t[\phi_{t}(z)]|_{t=0+} =   {\dot a(0)} \, \Im(1/z)
  . \]
If the domains $D_t = \Half \setminus K_t$ are simply connected
and $g_t = g_{K_t}$,
\[         \p_t[g_t(z)]|_{t=0} = \frac{\dot a(0)}{z} . \]
\end{proposition}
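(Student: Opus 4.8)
The plan is to derive the three asymptotic statements in turn, each one essentially a corollary of the error estimate \eqref{oct17.1}. First I would prove the expansion for $\phi_{K_t}(z)$. Fix $z \in \Half$. Since $r_t = \rad(K_t) \to 0$, for all sufficiently small $t$ we have $|z| \geq 2 r_t$, so \eqref{oct17.1} applies and gives
\[
  \E^z[\Im(B_{\tau_{D_t}})] = \Im(-1/z)\,\big[\hcap(K_t) + O(\hcap(K_t)\, r_t/|z|)\big].
\]
By the definition $\phi_{K_t}(z) = \Im(z) - \E^z[\Im(B_{\tau_{D_t}})]$ and the identity $\Im(-1/z) = -\Im(1/z)$, this rearranges to
\[
  \phi_{K_t}(z) = \Im(z) + \hcap(K_t)\,\Im(1/z) + O(\hcap(K_t)\, r_t).
\]
Now I would use $\hcap(K_t) = \hcap(K_0) + \dot a(0)\, t + o(t) = \dot a(0)\, t + o(t)$ (here $\hcap(K_0) = 0$ since $K_0$ must be empty or a single point once $r_0 = 0$; alternatively absorb $\hcap(K_0)$ into a constant and note it is $0$), together with $\hcap(K_t) = O(r_t^2) = o(1)$, to conclude $\phi_{K_t}(z) = \Im(z) + \dot a(0)\, t\, \Im(1/z) + O(t r_t)$, using that $o(t)\,\Im(1/z)$ and $O(\hcap(K_t)\, r_t) = O(r_t^3)$ are both absorbed into the error. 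Dividing by $t$ and sending $t \to 0+$ gives $\p_t[\phi_t(z)]|_{t=0+} = \dot a(0)\,\Im(1/z)$.

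For the statement about $g_t$, I would argue analogously using \eqref{oct17.2} in place of \eqref{oct17.1}: when the $D_t$ are simply connected, $g_t$ exists, and for $|z| \geq 2 r_t$,
\[
  g_t(z) = z - \frac{\hcap(K_t)}{z} + O\!\left(\frac{\hcap(K_t)\, r_t}{|z|^2}\right).
\]
(One should note the sign: \eqref{oct17.2} as written has $z - \hcap(K)/z$, whereas the defining expansion has $z + \hcap(K)/z$; I will take \eqref{oct17.2} at face value as stated, or correct the evident typo so that the imaginary part matches the $\phi_D$ expansion already derived — in either case the real computation is the same.) Then substituting $\hcap(K_t) = \dot a(0)\, t + o(t)$, subtracting $z$, dividing by $t$, and letting $t \to 0+$ yields $\p_t[g_t(z)]|_{t=0} = \dot a(0)/z$ (with the appropriate sign).

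I expect the only genuine subtlety to be bookkeeping of the error terms and confirming they vanish faster than $t$: specifically that $\hcap(K_t) r_t = O(r_t^3)$ is indeed $o(t)$, which is automatic since $\hcap(K_t) = O(r_t^2) = o(t)$ already forces $r_t \to 0$ fast relative to... — actually this needs care, since we only know $r_t \to 0$, not a rate. The clean way around this is to observe that the error $O(\hcap(K_t)\, r_t)$ can be written as $\hcap(K_t)\cdot O(r_t)$, and since $\hcap(K_t) = \dot a(0)\, t + o(t)$ we get $\hcap(K_t)\, O(r_t) = t\, O(r_t) + o(t)\,O(r_t) = O(t r_t)$, which is exactly the error claimed in the proposition; then dividing by $t$ gives $O(r_t) \to 0$. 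So the main (minor) obstacle is just packaging this multiplicative error correctly rather than trying to bound $r_t$ by a power of $t$. Everything else is direct substitution into the already-established estimates \eqref{oct17.1} and \eqref{oct17.2}.
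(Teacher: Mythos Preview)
Your approach is correct and is exactly what the paper intends: the proposition is stated without proof, prefaced only by ``The last inequality implies a version of the Loewner equation,'' referring to \eqref{oct17.2} (and implicitly \eqref{oct17.1}), which is precisely what you invoke. Your final packaging of the error as $\hcap(K_t)\cdot O(r_t) = O(t)\cdot O(r_t) = O(tr_t)$ is the clean route (the detour through $O(r_t^3)$ is unnecessary), and your flag on the sign in \eqref{oct17.2} is a genuine typo in the paper.

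One small caveat worth being aware of: the $o(t)$ coming from $\hcap(K_t) = \dot a(0)\,t + o(t)$ is \emph{not} in general dominated by $O(tr_t)$, since we have no lower bound on $r_t$. So the first displayed formula in the proposition is slightly informal as written; the honest error is $o(t) + O(tr_t)$. This does not affect the derivative conclusion, since both terms vanish upon dividing by $t$ and sending $t\to 0+$, which is the real content.
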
  

The last proposition is the basis for the following proposition
which introduces the {\em (chordal) Loewner differential equation}.
We will not give the details of the proof.  One does need to
prove that $U_t$ is continuous; the Beurling estimate is a useful
tool for this.

\begin{proposition}
Suppose $\gamma:(0,T] \rightarrow \Half$ is a simple
curve with $\gamma(0+) := U_0 \in \R$.  Let $a(t) =
\hcap(\gamma(0,t]),g_t = g_{\gamma(0,t]}$, and suppose
that $a$ is $C^1$.  Then
$g_t(z)$ satisfies
\begin{equation}  \label{chordalequationgen}
            \dot g_t(z) = \frac{\dot a(t)}{
                   g_t(z) - U_t} , \;\;\;\;
   g_0(z) = z , 
\end{equation}
where $U_t = g_t(\gamma(t))$.  For $z \in \Half \setminus
\gamma(0,T]$, this is valid for $t \leq T$.  For $z =
\gamma(s)$, this is valid for $t < s$.  The function
$t \mapsto U_t$ is continuous.
 \end{proposition}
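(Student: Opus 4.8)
The plan is to deduce the chordal Loewner equation from the "infinitesimal" version in the previous proposition by the standard semigroup/differentiation argument. First I would fix $z \in \Half \setminus \gamma(0,T]$ and $t < T$, and write $g_{t+s} = \tilde g_s \circ g_t$, where $\tilde g_s$ is the normalized conformal map associated to the set $K_{t,s} := g_t(\gamma(t,t+s])$ (more precisely, to the hull generated by that curve in $\Half$). Since $\gamma$ is simple, $g_t$ extends continuously to $\gamma(t)$ with $g_t(\gamma(t)) = U_t \in \R$, so as $s \to 0+$ the set $K_{t,s}$ shrinks to the point $U_t$: its radius around $U_t$ tends to $0$. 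I would then apply the previous proposition (after translating $U_t$ to the origin, using the translation invariance $\hcap(r+K)=\hcap(K)$ of the Lemma) to get
\[
   \p_s[\tilde g_s(w)]\big|_{s=0+} = \frac{\dot a_t(0)}{w - U_t},
\]
where $\dot a_t(0) = \p_s[\hcap(K_{t,s})]|_{s=0+}$. Evaluating at $w = g_t(z)$ and using the chain rule through $g_{t+s} = \tilde g_s \circ g_t$ gives the right-hand side of \eqref{chordalequationgen}, modulo identifying $\dot a_t(0)$ with $\dot a(t)$.

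Next I would verify that additivity of half-plane capacity under composition gives $\hcap(\gamma(0,t+s]) = \hcap(\gamma(0,t]) + \hcap(K_{t,s})$; this is the standard fact that if $g_K$ removes $K$ and then $g_{K'}$ removes $K'$ from $\Half$, the composition $g_{K'} \circ g_K$ has half-plane capacity $\hcap(K) + \hcap(K')$, read off from the $O(1/z)$ coefficient of the Laurent expansion. Hence $\dot a_t(0) = \p_s \hcap(\gamma(0,t+s])|_{s=0+}$, which is the right derivative of $a$ at $t$; combined with the hypothesis that $a$ is $C^1$ this equals $\dot a(t)$, and the same computation applied to the left difference quotient handles the two-sided derivative, so $t \mapsto g_t(z)$ is genuinely differentiable with the stated ODE. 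The case $z = \gamma(s)$ for $t < s$ is identical since $\gamma(s)$ stays in the domain $\Half \setminus \gamma(0,t]$ for those $t$.

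Finally, the continuity of $t \mapsto U_t$. The idea is that $U_{t+s} - U_t$ is controlled by the diameter of $K_{t,s} = g_t(\gamma(t,t+s])$ as seen from $\Half$: both $U_{t+s}$ and $U_t$ lie in the closure of $g_t(\gamma(t,t+s])$ (the former as the image of the tip, the latter as a limit), so it suffices to show $\diam\big(g_t(\gamma(t,t+s])\big) \to 0$ as $s\to 0$, locally uniformly in $t$. Continuity of $\gamma$ gives $\diam(\gamma(t,t+s]) \to 0$; the map $g_t$ need not have bounded derivative near $\gamma(t)$, so one cannot push this forward naively — this is where the Beurling estimate enters, exactly as flagged in the text. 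One runs a Brownian motion from a point just past $g_t(\gamma(t))$, uses the Beurling estimate to bound the probability it avoids a short piece of the curve $\gamma(t,t+s]$ pulled back, and concludes that the harmonic measure / image diameter is small; equivalently, one bounds $\hcap(K_{t,s})$ and $\rad(K_{t,s})$ via \eqref{oct17.1}, \eqref{oct17.2} and the Beurling-type half-plane estimate to get a modulus of continuity for $U$. I expect this continuity step — making the "small capacity forces small displacement of the driving point, uniformly" estimate rigorous via Beurling — to be the main obstacle; the differentiation argument above is essentially formal once the semigroup and capacity-additivity identities are in place.
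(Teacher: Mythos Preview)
Your outline is correct and matches the paper's intended approach: the paper does not actually give a proof here, only remarks that the previous (infinitesimal) proposition is the basis and that the Beurling estimate is the tool for the continuity of $U_t$, and your semigroup decomposition $g_{t+s}=\tilde g_s\circ g_t$ plus additivity of $\hcap$ is exactly how one converts that infinitesimal statement into the ODE. One small point of presentation: the claim in your first paragraph that $K_{t,s}=g_t(\gamma(t,t+s])$ shrinks to $U_t$ as $s\to 0$ is not automatic (as you yourself note later, $g_t'$ is not bounded near $\gamma(t)$), so the Beurling argument is needed already there, not only for continuity of $U_t$; in fact the two statements---$\rad(K_{t,s})\to 0$ and continuity of $U$---are essentially the same estimate, so you should invoke Beurling before applying the previous proposition rather than after.
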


\begin{figure}[htb]
\begin{center}
\epsfig{file=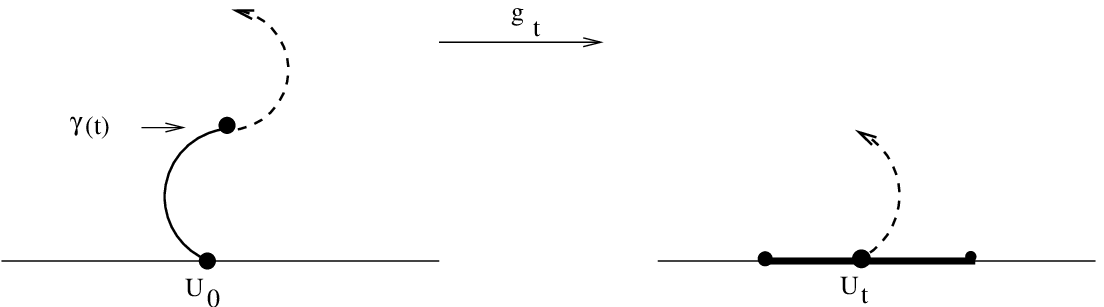}
\caption{The conformal transformation $g_t$ induced by $\gamma$ .}
\end{center}
\end{figure}

In the last proposition we started with a curve and 
produced a function $U_t$.  We will  reverse
the procedure here.
Suppose $a:[0,\infty) \rightarrow
[0,\infty)$ is a strictly increasing
$C^1$ function with $a(0) = 0$, and $U:[0,\infty)
\rightarrow \R$ is a continuous function.
We will consider the (chordal) Loewner equation.
\beq  \label{chordalloewner}
  \dot g_t(z) = \frac{\dot{a}(t)}{g_t(z) - U_t}, \;\;\;\;
    g_0(z) = z, 
\end{equation}
For each $z \in \C \setminus \{0\}$, the solution of the equation above
exists up to time $T_z \in (0,\infty]$. Using the
continuity of $U_t$, one can see that for every $\epsilon > 0$
there is a $t > 0$ such that $T_z \geq t$ for $|z-
U_0| \geq \epsilon$. Moreover, it can be shown that
 for fixed $t$,
$g_t$ is the conformal transformation of $H_t :=\{z \in \Half:
T_z > t\}$ onto $\Half$ with expansion at infinity
\[    g_t(z) = z + \frac{a(t)}{z} + O(|z|^{-2}), \;\;\;\;
  z \rightarrow \infty . \]
If $f_t(z) = g_t^{-1}(z)$, then $f_t$ is a conformal
transformation of $\Half$ onto $H_t$. By differentiating
both sides of 
  $f_t(g_t(z)) = z$ with respect to $t$, we see that
$f_t$ satisfies  
\[        \dot f_t(z) = - \frac{\dot a(t) \, f_t'(z)}
                             {z- U_t} , \;\;\;\;
   f_t(0) = 0 . \]
Here, and throughout these lectures, $'$ refers to spatial
derivatives. 

We have made no assumptions on $U_t$ other than continuity.
Since $U_t$ is real-valued, $g_t(\overline z) = \overline
{g_t(z)}$ and $T_{\overline z} = T_z$. Usually we consider
the equation only for $z \in \overline \Half $.
If we  write
\[  g_t(z) = u_t(z) + i v_t(z), \]
then (\ref{chordalloewner})  becomes
\[     \dot{u}_t(z) = \frac{\dot a(t) \, (u_t(z) - U_t)}{(u_t(z)
   - U_t)^2 + v_t(z)^2}, \;\;\;\; \dot{v}_t(z) =
  -  \frac{\dot a(t) \, v_t(z)}{(u_t(z)
   - U_t)^2 + v_t(z)^2}. \]
For fixed $z$, we will often write 
\[    Z_t = Z_t(z) = g_t(z) - U_t  = X_t + i Y_t, \]
\[    X_t = X_t(z) = u_t(z) - U_t, \;\;\;\;
        Y_t = Y_t(z) = v_t(z) , \]
in which case we can write \eqref{chordalloewner}   as
\begin{equation}  \label{dec22.1}
  \dot{u}_t(z) = \frac{\dot a(t) \, X_t}{X_t^2 + Y_t^2},
\;\;\;\;\; \dot v_t(z) = \dot Y_t = -\frac {\dot a(t)\, Y_t}
   {X_t^2 + Y_t^2}  . 
\end{equation}
Differentiating \eqref{chordalloewner} with respect to
$z$ gives
\[             \dot g_t'(z) = - \frac{\dot a(t) \, g_t'(z)}
 {
                                Z_t^2}. \]
Since $g_0'(z) = 1$, we can solve this equation,
\begin{equation}  \label{chordalderivative}
     g_t'(z) = \exp\left\{-\int_0^t \frac{\dot a(s) \, ds}
                {Z_s^2}\right\}, 
\end{equation}
\begin{equation}  \label{chordalabsolutederivative}
   |g_t'(z)| =  \exp\left\{
-\int_0^t  \Re\left[\frac{\dot a(s) }
                {Z_s^2}\right]\, ds\right\}
   = \exp\left\{
 \int_0^t   \frac{\dot a(s) \, (Y_s^2 - X_s^2)\, ds}
                {(X_s^2 + Y_s^2)^2} \right\}
\end{equation} 
The last equation can be rewritten as
\begin{equation}  \label{may4.1}
      \p_t |g_t'(z)| = \dot a(t) \, |g_t'(z)| \,
                           \frac{Y_t^2 - X_t^2}{(X_t^2 + Y_t^2)^2}. 
\end{equation}

\section{Maps generated by a curve}

Any continuous function $U_t$ and $C^1$ function
$a(t)$ produces 
the conformal maps $g_t$ and hence the domains $H_t$.  It is
not always true that the domains $H_t$ are obtained by slitting
$\Half$ with a curve $\gamma$. By a
{\em curve}, we will mean  a continuous
function from an interval in $\R$ into $\C$.

\begin{definition}
Let 
\[  H_t^{\rm pion} := \bigcup_{0 \leq s \leq t}
 \p H_s\]
 denote the {\em pioneer points} of
$H_t$. 
  If there is a curve
$\gamma:[0,\infty) \rightarrow  \overline \Half$ with $\gamma(0)
\in \R$ such that
\[     H_t^{\rm pion} = \R \cup \gamma(0,t], \]
we say that $g_t$ is {\em generated by the
curve $\gamma$}. (The term pioneer
comes from the idea that a pioneer
is someone who is on the frontier at some time.)
\end{definition}

Note that $H_t$ is the unbounded component of 
$\Half \setminus H_t^{\rm pion}$.   Suppose
that $g_t$ is generated by a curve $\gamma$.
If  $\gamma$ is simple with $\gamma(0,t] \subset
\Half$,
then $H_t^{\rm pion} = \R \cup \gamma(0,t]$,
$H_t = \Half \setminus \gamma(0,t]$.  If $\gamma$
is not simple, then it is possible for there 
to be points in $\Half \setminus H_t$ that
are not on $\gamma(0,t]$. Since $\p H_t
\subset \R \cup \gamma(0,t]$, we can see that
\[  \hcap(\gamma(0,t]) = \hcap (\Half \setminus
  H_t) =  a(t) . \]
In particular, for every $t, \epsilon > 0$,
\[  \gamma (t, t+\epsilon] \cap H_t \neq
\emptyset . \]
Also,
\[   U_t = g_t(\gamma(t)) = \lim_{\epsilon
\rightarrow 0+} g_t(\gamma(t + \epsilon)).\]
It is sometimes difficult to tell whether or not the
maps are generated by a curve.  The next proposition gives
a criterion.

\begin{proposition}  Suppose that 
 $U_t: [0,1] \rightarrow \R$ is a continuous
function and $g_t$ is the solution to \eqref{chordalloewner}
with $a(t) =  t$.
Suppose that there exists $v:(0,1] \rightarrow (0,1]$
with $v(0+) = 0$ such that for all $0 \leq t \leq 1$
and all $\epsilon < 1$,
\begin{equation}  \label{jan2.1}
       \int_0^\epsilon |f_t'(U_t + iy)| \, dy
               \leq v(\epsilon). 
\end{equation}
Then $U_t$ is generated by a curve $\gamma$.
\end{proposition}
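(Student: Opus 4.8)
The plan is to show that the curve $\gamma(t) := f_t(U_t) = \lim_{y\to 0+} f_t(U_t + iy)$ is well-defined and continuous, and that it generates the maps $g_t$. First I would use the hypothesis \eqref{jan2.1} to establish that the radial limit $f_t(U_t) := \lim_{y \to 0+} f_t(U_t + iy)$ exists for each fixed $t \in [0,1]$: indeed, for $0 < y_1 < y_2 < \epsilon$ we have $|f_t(U_t + iy_1) - f_t(U_t + iy_2)| \le \int_{y_1}^{y_2} |f_t'(U_t + is)|\, ds \le v(\epsilon)$, so the family $\{f_t(U_t + iy)\}_{y > 0}$ is Cauchy as $y \to 0+$, and the limit exists with $|f_t(U_t + iy) - \gamma(t)| \le v(y)$. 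This also shows the convergence to $\gamma(t)$ is uniform in $t$, which will be important for continuity.

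Next I would prove that $t \mapsto \gamma(t)$ is continuous. Fix $t_0$; for $y > 0$ fixed, $t \mapsto f_t(U_t + iy)$ is continuous (the solution of the Loewner flow depends continuously on $t$, and $U_t$ is continuous), so for any $\eta > 0$, pick $y$ small with $v(y) < \eta$, then $|\gamma(t) - \gamma(t_0)| \le |\gamma(t) - f_t(U_t+iy)| + |f_t(U_t+iy) - f_{t_0}(U_{t_0}+iy)| + |f_{t_0}(U_{t_0}+iy) - \gamma(t_0)| \le 2\eta + |f_t(U_t+iy) - f_{t_0}(U_{t_0}+iy)|$, and the middle term is small for $t$ near $t_0$. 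Hence $\gamma$ is continuous on $[0,1]$, and $\gamma(0) = U_0 \in \R$.

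The remaining, and main, task is to verify that $g_t$ is generated by $\gamma$ in the sense of the definition, i.e., that $H_t^{\rm pion} = \R \cup \gamma(0,t]$. The key point is to identify $\R \cup \gamma(0,t]$ with the complement (in $\overline\Half$) of the domain of $g_t$, or more precisely to show that $H_t$ is the unbounded connected component of $\Half \setminus (\R \cup \gamma(0,t])$ and that every point of $\gamma(0,t]$ is a pioneer point. For the first part, one shows $f_t$ extends continuously to $\overline\Half$ by a similar radial-limit argument using \eqref{jan2.1} translated to points $f_s$ for $s \le t$ (using the composition rule $g_t = \tilde g_{t-s} \circ g_s$ and the corresponding bound for the shifted flow); the boundary values trace out exactly $\R \cup \gamma(0,t]$. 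For the pioneer-point statement, one notes that $\gamma(t) = f_t(U_t)$ is by construction the image under $f_t$ of the point $U_t$ where the flow is singular at time $t$, hence $\gamma(t) \in \partial H_t$, and running over $s \le t$ gives $\gamma(0,t] \subset H_t^{\rm pion}$; the reverse inclusion follows because any pioneer point of $H_t$ is a boundary point $\partial H_s$ for some $s \le t$, which must lie on $\gamma(0,s] \subset \gamma(0,t]$ by the description of the boundary values of $f_s$. I expect the main obstacle to be the careful bookkeeping in extending $f_t$ continuously up to the real line and checking that the boundary values genuinely coincide with $\gamma$ — this requires applying the hypothesis not just at the ``tip'' but uniformly along the flow, via the semigroup property of Loewner chains, and handling the non-simple case where $H_t \subsetneq \Half \setminus \gamma(0,t]$.
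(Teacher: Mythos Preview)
Your argument for the existence of $\gamma(t)$ and its continuity is essentially identical to the paper's: both use the integral bound \eqref{jan2.1} to get a uniform Cauchy estimate $|\gamma(t) - f_t(U_t + i\epsilon)| \le v(\epsilon)$, and then a three-term triangle inequality through the continuous approximants $t \mapsto f_t(U_t + i\epsilon)$.

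Where you diverge is in the final step. You propose to extend $f_t$ continuously to all of $\overline\Half$ and read off the boundary values, and you correctly flag the obstacle: the hypothesis \eqref{jan2.1} only controls $|f_t'|$ along the single vertical ray through $U_t$, so transporting it to other boundary points via the semigroup $f_t = f_s \circ \tilde f_{t-s}$ requires some work (and in the non-simple case $f_s$ need not extend continuously where you want to compose). The paper sidesteps this entirely. It observes directly that $\gamma(t) \in \partial H_t$ --- since $\gamma(t)$ is a limit of points $f_t(U_t + i\epsilon) \in H_t$, yet cannot lie in $H_t$ itself (else $g_t(\gamma(t)) = U_t \in \R$, contradicting $g_t : H_t \to \Half$). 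This already gives $\gamma(0,t] \subset H_t^{\rm pion}$ and hence that $H_t$ is contained in the unbounded component of $\Half \setminus \gamma(0,t]$; the reverse inclusion is then dispatched with a one-line ``not difficult to show'' (the standard argument being that $\bigcap_{\epsilon > 0} \overline{K_{s+\epsilon} \setminus K_s} = \{\gamma(s)\}$, so any point swallowed at time $s$ is either $\gamma(s)$ or is disconnected from $\infty$ by $\gamma(0,s]$). Your route would also work, but the paper's is shorter and avoids the boundary-extension bookkeeping you were worried about.
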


\begin{proof}  Using \eqref{jan2.1}, we can see that
the limit
\begin{equation}  \label{jan2.1.1}
          \gamma(t) := \lim_{\epsilon \rightarrow 0+}
             f_t(U_t + i\epsilon) , 
\end{equation}
exists and
\begin{equation}  \label{jan2.2}
             |\gamma(t) - f_t(U_t + i \epsilon)| \leq
              v(\epsilon), \;\;\;\;\; 0 < t,\epsilon \leq 1. 
\end{equation}
For fixed $\epsilon > 0$, the function $t \mapsto
f_t(U_t + i \epsilon)$ is continuous and hence there
is a $\delta_\epsilon$ such that
\[    |f_t(U_t+ i \epsilon) - f_s(U_s
  + i \epsilon)| \leq v(\epsilon), \;\;\;\;
  |s-t| \leq \delta_\epsilon , \]
which implies
\[      |\gamma(t) - \gamma(s)| \leq 3v(\epsilon),
\;\;\;\;  |s-t| \leq \delta_\epsilon. \]
This shows that $t \mapsto \gamma(t)$ is a continuous
function.
The definition of $\gamma$ shows that $\gamma(t) \in \p H_t$ and
hence $H_t$ is contained in the unbounded component of
$\Half \setminus \gamma(0,t]$. It is not difficult to show,
in fact, that these are equal. 

\end{proof}

\begin{remark}  The uniform bound \eqref{jan2.1} is more than
is needed show that the limit \eqref{jan2.1.1} exists, 
but it is used to prove
  the continuity of $\gamma$.  There exist examples where
the limit \eqref{jan2.1.1} exists for all $t$ but for which 
$\gamma$ is not a continuous function of $t$.
\end{remark}

\begin{remark}  The distortion theorem tells us that
\[  \int_0^{2^{-n}} |f_t'(U_t + i y)|
   \asymp   \sum_{j=n}^\infty \, 2^{-j} \, |f_t'
                     (U_t + i 2^{-j})|. \]
A sufficient condition is to show that
$|f_t'(U_t + iy)| \leq \phi(y) $ where $\phi$ satisfies
\[             \sum_{j=1}^\infty 2^{-j} \, \phi(2^{-j})
  < \infty. \]
\end{remark}

\section{A flow on conformal maps}

The Loewner equation can be considered as a flow
on the space of locally real conformal transformations at the origin.
Suppose 
\[            F(z) = \sum_{n=0}^\infty \frac{q_n}{n!} \, z^n , \]
is a  function analytic in a neighborhood ${\mathcal N}
= {\mathcal N}_F$
of the 
origin with $q_1 > 0$ and $q_n \in \R$ for $n \geq 0$. Assume
for ease that $U_0 = 0$.  Let $K_t = \Half \setminus
H_t$.   For $t$
sufficiently small, $K_t \subset {\mathcal N}$
and hence we can define $K^*_t = F(K_t)$.  Let $g^*_t
= g_{K_t^*}$ and set
$    \psi_t(z) = (g^*_t \circ F \circ g_t^{-1})(z).$
  We can write
down the differential equation for $\psi_t$.  Assume that
$a(t) = t, \dot a(t) = 1$.  Then the map $g^*_t$
satisfies the equation
\[     \dot g_t^*(z) = \frac{\psi_t'(U_t)^2}{g_t^*(z) - U_t^*}, \]
where $U_t^* = \psi_t(U_t)$.  The extra term $\psi_t'(U_t)^2$ arises
from the scaling rule $\hcap(rK) = r^2 \hcap (K)$. Since
$\psi_t(z) = g_t^* \circ F \circ f_t$, the chain rule gives
\[    \dot \psi_t(z) = \frac{\psi_t'(U_t)^2} {\psi_t(z) - \psi_t(U_t^*)}
     - \frac{\psi_t'(z)}{z - U_t} . \]
In particular, since $U_0 = 0$,
\[    \dot \psi_0(z)=  (\Lambda F)(z) :=
 \frac{F'(0)^2}{F(z)-F(0)} - \frac{F'(z)}{z}. \]
Note that $\Lambda F$ is analytic in ${\mathcal N}$ with
$\Lambda F(0) = -3F''(0)/2$.  By differentiating this
equation, we can find $\dot \psi_0^{(k)}(n)$ for all positive
integers $k$.

\begin{lemma} 
If 
\[   F(z) = \sum_{n=0}^\infty  q_n \, x^n, \]
then
\begin{equation}  \label{oct17.5} \Lambda  F = 
-\frac{3q_2}{2} +
   \left(\frac{q_2^2}{4q_1}-  \frac{2q_3}{3} \right) \, z
     + \left(  \frac{q_2q_3}{6q_1} - \frac{5q_4}{24} - 
\frac{q_2^3}{8q_1^2}  \right) \, z^2 + \cdots. 
\end{equation}
\end{lemma}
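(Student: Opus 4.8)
The plan is to prove the lemma by a direct manipulation of power series; the same computation simultaneously shows that $\Lambda F$ is analytic in $\mathcal N$ (the apparent pole at the origin is removable) and produces the first three Taylor coefficients. Throughout I write $q_n = F^{(n)}(0)$, matching the normalization $F(z) = \sum_{n \geq 0} (q_n/n!)\, z^n$ used earlier in this section. First I would record, valid in a punctured neighborhood of $0$ contained in $\mathcal N$, the expansions
\[ F(z) - F(0) = q_1 z \left( 1 + \frac{q_2}{2 q_1} z + \frac{q_3}{6 q_1} z^2 + \frac{q_4}{24 q_1} z^3 + \cdots \right) \]
and
\[ \frac{F'(z)}{z} = \frac{q_1}{z} + q_2 + \frac{q_3}{2} z + \frac{q_4}{6} z^2 + \cdots . \]
Since $F'(0) = q_1$, the first term in the definition of $\Lambda F$ equals $(q_1/z)$ times the reciprocal of the bracketed series above.

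Next I would invert that bracketed series. Setting $b_1 = q_2/(2 q_1)$, $b_2 = q_3/(6 q_1)$, $b_3 = q_4/(24 q_1)$, the geometric-series expansion gives
\[ \frac{1}{1 + b_1 z + b_2 z^2 + b_3 z^3 + \cdots} = 1 - b_1 z + (b_1^2 - b_2) z^2 - (b_1^3 - 2 b_1 b_2 + b_3) z^3 + \cdots , \]
so that
\[ \frac{F'(0)^2}{F(z) - F(0)} = \frac{q_1}{z} - q_1 b_1 + q_1 (b_1^2 - b_2) z + q_1 (- b_1^3 + 2 b_1 b_2 - b_3) z^2 + \cdots . \]

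Subtracting $F'(z)/z$ from this, the $q_1/z$ terms cancel, which already proves that $\Lambda F$ extends holomorphically across $0$; in particular $\Lambda F(0) = - q_1 b_1 - q_2 = -\frac{3 q_2}{2}$, consistent with the remark that $\Lambda F(0) = -3 F''(0)/2$. Reading off the coefficients of $z^0, z^1, z^2$ and substituting the values of $b_1, b_2, b_3$ then gives the three displayed coefficients after routine simplification: the coefficient of $z$ is $q_1(b_1^2 - b_2) - q_3/2 = q_2^2/(4 q_1) - q_3/6 - q_3/2 = q_2^2/(4 q_1) - 2 q_3/3$, and the coefficient of $z^2$ is $q_1(- b_1^3 + 2 b_1 b_2 - b_3) - q_4/6 = q_2 q_3/(6 q_1) - q_2^3/(8 q_1^2) - q_4/24 - q_4/6 = q_2 q_3/(6 q_1) - 5 q_4/24 - q_2^3/(8 q_1^2)$.

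There is no genuine obstacle; the whole content is the bookkeeping in the single series inversion together with the final arithmetic, and the same scheme yields the coefficient of $z^k$ for any $k$. The one point to watch is the normalization: the displayed identity holds for $q_n = F^{(n)}(0)$, i.e.\ with $F = \sum q_n z^n/n!$, rather than for the literal reading $F = \sum q_n z^n$ appearing in the statement, so the $q_n$ there are to be understood in the sense fixed earlier in this section.
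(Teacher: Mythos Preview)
Your proof is correct and follows essentially the same route as the paper: both arguments amount to expanding $F'(0)^2/(F(z)-F(0))$ via the geometric series, subtracting $F'(z)/z$, and reading off the first three coefficients. The only organizational difference is that the paper first invokes the homogeneity $\Lambda(rF+q_0)=r\,\Lambda F$ to reduce to $q_0=0,\ q_1=1$ before computing, whereas you carry a general $q_1$ through the whole calculation; your observation about the normalization discrepancy in the lemma statement is also apt.
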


\begin{proof}
Since $\Lambda(rF + q_0) = r \Lambda F$, it suffices
to prove the expansion for 
$q_0= 0, q_1 = 1$, for which
\[ \Lambda F(z) = \left[\sum_{n=1}^\infty \frac{q_n}{n!}
  \right]^{-1} - \frac 1z  \sum_{n=0}^\infty \frac{q_{n+1}}{n!}
    z^n   . \]
We expand
\[ \left[\sum_{n=1}^\infty \frac{q_n}{n!}z^n
  \right]^{-1} = \frac 1z \, \left[1 - \left(\sum_{n \geq 2}
     \frac{q_n}{n!}z^{n-1}\right) + \left(\sum_{n \geq 2}
     \frac{q_n}{n!}z^{n-1}\right)^2 - \cdots \right],\]    
\[  \frac 12  \left(\sum_{n \geq 2}
     \frac{q_n}{n!}z^{n-1}\right)  = \frac{q_2}{2} + 
  \frac{q_3}{6} z + \frac{q_4}{24} z^2 + \cdots, \]    
\[ \frac 1z \left(\sum_{n \geq 2}
     \frac{q_n}{n!}z^{n-1}\right)^2  =
     \frac{q_2^2}{4} z + \frac{q_2q_3}{6} z^2 +
     \cdots, \]
\[  \frac 1z \left( \sum_{n \geq 2}
     \frac{q_n}{n!}z^{n-1}\right)^3 = \frac{q_2^3}{8} z^2
     + \cdots , \]
which gives
\[ \frac{1}{F(z)} = \frac 1z - \frac{q_2}{2}
  + \left(  \frac{q_2^2}{4}-  \frac{q_3}{6} \right) \, z
   + \left(  \frac{q_2q_3}{6} - \frac{q_4}{24} - 
\frac{q_2^3}{8}  \right) \, z^2 + \cdots \;\;\;\;.\]
Also,
\[ \frac 1z  \sum_{n=0}^\infty \frac{q_{n+1}}{n!}
    z^n   = \frac{1}{z}
  + q_2  + \frac{q_3}{2} \, z + \frac{q_4}{6} \, z^2 + \cdots,
\]
giving
\[  \Lambda F(z) = -\frac{3q_2}{2} +
   \left(\frac{q_2^2}{4}-  \frac{2q_3}{3} \right) \, z
     + \left(  \frac{q_2q_3}{6} - \frac{5q_4}{24} - 
\frac{q_2^3}{8}  \right) \, z^2 + \cdots \;\;\;\;.\]
\end{proof}

\section{Doubly infinite time}

If $U_t:(-\infty,\infty) \rightarrow \C$ is a continuous
function, we can consider the solution $g_t$ of
the Loewner equation
\begin{equation} \label{reversechordal}
          \dot g_t(z) = \frac{a}{g_t(z) - U_t},
\;\;\;\;  g_0(z) = z . 
\end{equation}
Let $\tilde g_t(z) = g_{-t}(z), \tilde U_t = U_{-t}$.
Then $\tilde g_t, 0 \leq t < \infty$, satisfies
\[     \dot{\tilde g}_t(z) = - \frac{a}{\tilde g_t(z) -
  \tilde U_t} , \;\;\;\;  \tilde g_0(z) = z. \] 

\begin{proposition} \label{doubleprop}
 For each $t \geq 0$, $\tilde g_t$
is a conformal transformation of $\Half$ onto a
subdomain $\tilde H_t =
\tilde g_t(\Half)$ with $\hcap(\Half \setminus H_t) =
 at$ satisfying
\[            \tilde g_t(z) = z + \frac{at}{z} + O(|z|^{-2}),
\;\;\;\;  z \rightarrow \infty . \]
\end{proposition}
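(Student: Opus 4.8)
The plan is to reduce the statement to the forward chordal Loewner theory already developed in Section \ref{hcapsec} and the preceding sections, by a time-reversal trick. Fix $T > 0$ and consider the ordinary (forward) Loewner equation $\dot h_s(w) = a/(h_s(w) - V_s)$ with driving function $V_s = \tilde U_{T-s} = U_{-(T-s)}$, $0 \le s \le T$, and $h_0 = \text{id}$. By the results quoted in the section ``Maps generated by a curve'', for each $s$ the map $h_s$ is the conformal transformation of a subdomain $H_s^{(T)} = \{z \in \Half : T_z > s\}$ onto $\Half$ with hydrodynamic normalization $h_s(z) = z + as/z + O(|z|^{-2})$, and in particular $\hcap(\Half \setminus H_s^{(T)}) = as$. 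The key algebraic observation is that $f_s^{(T)} := h_s^{-1}$ solves, in $s$, the equation $\dot f_s^{(T)}(z) = -a\, (f_s^{(T)})'(z) / (z - V_s)$, which is exactly of the form obeyed by the backward flow; comparing initial conditions and using uniqueness of ODE solutions, one identifies $\tilde g_T = f_T^{(T)} = h_T^{-1}$ for each fixed $T$. Hence $\tilde g_T$ is the inverse of a hydrodynamically normalized chordal map, so it is a conformal transformation of $\Half$ onto $\tilde H_T := h_T^{-1}(\Half) = \Half \setminus (\text{something with } \hcap = aT)$.

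First I would carry out this identification carefully: write down both ODEs, verify that differentiating $f_s^{(T)}(h_s(z)) = z$ in $s$ produces the stated equation for $f_s^{(T)}$ (this is the same computation done earlier in the excerpt for $f_t = g_t^{-1}$), and then invoke uniqueness to conclude $\tilde g_s = f_s^{(T)}$ on $[0,T]$; since $T$ was arbitrary this pins down $\tilde g_t$ for all $t \ge 0$. Next I would extract the normalization at infinity. Because $h_T(z) = z + aT/z + O(|z|^{-2})$ near infinity, its inverse has the expansion $\tilde g_T(z) = h_T^{-1}(z) = z - aT/z + O(|z|^{-2})$... and here I must be careful about the sign. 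Inverting a map of the form $w = z + c/z + O(z^{-2})$ gives $z = w - c/w + O(w^{-2})$, which would read $\tilde g_T(z) = z - aT/z + \cdots$, apparently contradicting the $+at/z$ in the statement. The resolution is that $\hcap(\Half \setminus \tilde H_T)$ is itself equal to $-aT$ in the sense that $\tilde g_T$ is \emph{not} a chordal map of the standard type: $\tilde H_T \supset \Half$ when $a > 0$, i.e.\ $\tilde g_t$ grows the domain rather than shrinking it. So I would state the expansion directly as $\tilde g_t(z) = z + (aT)/z + O(|z|^{-2})$ only after re-examining the sign conventions in \eqref{reversechordal} versus \eqref{chordalloewner}; the cleanest route is to read off the coefficient of $1/z$ directly from \eqref{reversechordal} by the same asymptotic argument used to derive the $a(t)/z$ term for the forward equation, namely integrating $\dot g_t(z) \approx a/z$ from $0$ to $t$ for large $z$, which gives coefficient $at$ with the sign dictated by the right-hand side of \eqref{reversechordal}.

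The main obstacle, then, is bookkeeping of signs and of which direction the domains move: everything hinges on getting consistent orientation conventions between the forward equation \eqref{chordalloewner}, the inverse-map equation for $f_t$, and the backward equation \eqref{reversechordal}. Once that is settled, the conformality of $\tilde g_t$ is immediate (it is an inverse of a univalent map, or directly a solution flow of a Loewner-type ODE with everywhere-nonvanishing derivative, since $\tilde g_t'(z) = \exp\{\int_0^t a\,ds/(\tilde g_s(z) - \tilde U_s)^2\} \ne 0$), the image $\tilde H_t = \tilde g_t(\Half)$ is a subdomain of... actually a superdomain of $\Half$, and the half-plane capacity statement follows from the $1/z$-coefficient of the expansion at infinity together with the scaling/normalization facts from Section \ref{hcapsec}. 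I would also remark that no claim about $\tilde g_t$ being generated by a curve is made or needed here, so the subtle continuity issues from the previous section do not arise.
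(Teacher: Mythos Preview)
Your overall strategy---reduce to the forward Loewner theory by a time-reversal trick---matches the paper's, and your final identification $\tilde g_T = h_T^{-1}$ (up to a real translation) is correct. But the step where you claim $\tilde g_s = f_s^{(T)}$ for all $s \in [0,T]$ by ``ODE uniqueness'' is a genuine gap. The inverse map $f_s^{(T)} = h_s^{-1}$ satisfies the transport-type equation
\[
\dot f_s^{(T)}(w) \;=\; -\,\frac{a\,(f_s^{(T)})'(w)}{w - V_s},
\]
which involves the \emph{spatial} derivative, whereas the backward flow satisfies the pointwise ODE
\[
\dot{\tilde g}_s(z) \;=\; -\,\frac{a}{\tilde g_s(z) - \tilde U_s}.
\]
These are not the same equation (and the driving functions differ as well: $V_s = \tilde U_{T-s}$ versus $\tilde U_s$), so no uniqueness argument applies. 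In fact one has $f_s^{(T)} = \tilde g_T \circ \tilde g_{T-s}^{-1}$, which agrees with $\tilde g_s$ only at the endpoints $s=0$ and $s=T$.

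The paper obtains the identification by comparing \emph{trajectories} rather than maps. For fixed $z \in \Half$ and fixed $T$, set $r_t = \tilde g_{T-t}(z)$ for $0 \le t \le T$; then
\[
\dot r_t \;=\; -\,\dot{\tilde g}_{T-t}(z) \;=\; \frac{a}{r_t - \tilde U_{T-t}},
\]
which is the \emph{forward} Loewner ODE with driving $V_t = \tilde U_{T-t}$, started at $r_0 = \tilde g_T(z)$. Uniqueness for \emph{this} ODE gives $r_t = h_t(\tilde g_T(z))$, and evaluating at $t=T$ yields $z = h_T(\tilde g_T(z))$, i.e.\ $\tilde g_T = h_T^{-1}$. (The paper also inserts a translation by the initial value of the driving function; this is cosmetic.) From here the conformality of $\tilde g_T$, the description $\tilde H_T = h_T^{-1}(\Half) \subset \Half$, and $\hcap(\Half \setminus \tilde H_T) = aT$ all follow from the forward theory already developed.

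Your sign worry is legitimate: inverting $h_T(z) = z + aT/z + O(|z|^{-2})$ gives $\tilde g_T(z) = z - aT/z + O(|z|^{-2})$, and integrating $\dot{\tilde g}_t(z) \approx -a/z$ for large $|z|$ directly confirms the minus sign. The ``$+at/z$'' in the printed statement is a typo; the substantive content---that $\tilde g_t$ maps $\Half$ onto a subdomain $\tilde H_t \subset \Half$ with $\hcap(\Half \setminus \tilde H_t) = at$---is what the argument actually delivers.
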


\begin{proof}
 For
fixed $T>0$, let $V_t = \tilde U_{T-t}-U_T$.  If $z \in
\Half$, then $r_t(z) =
\tilde g_{T-t}(z)-U_T$ satisfies
\[      \dot r_t(z) = \frac{a}{r_t(z) - V_t},\;\;\;\;\;
   r_0(z) = \tilde g_{T}(z) -U_T, \;\;\;  r_T(z) = z -U_T. \]
In other words, if we let $g_t(z) = \tilde g_{T-t}(\tilde g_T^{-1}(z  
 +U_T))
  -U_T$,
then $g_t(z)$ satisfies
\[           \dot g_t(z) = \frac{a}{g_t(z) - V_t}, \;\;\;\;
     g_0(z) = z . \]
This is the usual Loewner equation.  Note that
\[ g_T(z) =\tilde g_0(\tilde g_T^{-1}(z+U_T)) - U_T=
  \tilde g_T^{-1}(z-V_T) + V_T .\]
In particular,
\begin{equation}  \label{oct26.1}
  \tilde g_T'(z-V_T) = (g_T^{-1})'(z) . 
\end{equation}

\end{proof}

\section{Distance to boundary}

Suppose $g_t$ satisfies \eqref{chordalloewner},
$H_t = \{z: T_z > t\}$, and $z \in \Half$.
Recall that 
$ H_t^{\rm pion} := \bigcup_{0 \leq s \leq t}
 \p H_s$
 denotes the  pioneer points  of
$H_t$.  If $\gamma$ is generated by a curve $\gamma$,
then $ H_t^{\rm pion} = \R \cup \gamma(0,t]$.  
In this section we consider
\[   \dist\left[z, H_t^{\rm pion}\right]. \]
If the maps are
 generated by the curve $\gamma$, this is the
same as $\dist[z,\R \cup \gamma(0,t]).$
If $t < T_z$, this is also the same as $\dist(z,
\p H_t)$.  Therefore,
\[    \dist\left[z, H_\infty^{\rm pion}\right]
                = \lim_{t \rightarrow T_z-}
                \dist[z, \p H_t]. \]
This exact quantity is not as easy to study as
a 
 closely
related quantity.  For $t < T_z$, we define
\begin{equation}  \label{distsub}
    Y_t = Y_{t,z} = \Im[g_t(z)], \;\;\;\;
   \distsub_t = \distsub_{t,z} = \frac{Y_t}{|g_t'(z)|}.        
\end{equation}
Using  \eqref{dec22.1} and \eqref{chordalabsolutederivative} we
see that
\begin{equation}  \label{dec26.1}
             \dot \distsub_t = - \distsub_t \,
   \frac{2\, \dot a(t) \, Y_t^2}{(X_t^2 + Y_t^2)^2},  
\end{equation}
\[              \distsub_t =\Im[z]
  \, \exp\left\{ -2 \int_0^t \frac{\dot a(s) \, Y_s^2\, ds}
               {(X_s^2 + Y_s^2)^2} \right\}. \]
In particular, $\distsub_t$ is decreasing in $t$, so we can define
\[        \distsub_{\infty,z}  = \distsub_{T_z-,z} =
   \exp\left\{ -2 \int_0^{T_z} \frac{\dot a(s) \, Y_s^2\, ds}
               {(X_s^2 + Y_s^2)^2} \right\}, \;\;\;\;
  t \geq T_z. \]

The following lemma is an immediate corollary of the
Koebe-$(1/4)$ theorem.

\begin{lemma}  Under the assumptions above, if $t < T_z$,
\begin{equation}  \label{distsub2}
      \frac{\distsub_{t,z}}{4}
                \leq  \dist(z,   H_t^{\rm pion})
    \leq 4 \, \distsub_{t,z}. 
\end{equation}
Hence,
\[         \frac{\distsub_{\infty,z}}{4}
                \leq  \dist(z,   H_\infty^{\rm pion})
    \leq 4 \, \distsub_{\infty,z}. \]     
\end{lemma}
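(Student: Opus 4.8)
The plan is to deduce this lemma from the corollary of the Koebe $(1/4)$-theorem stated earlier — namely that if $f\colon D_1\to D_2$ is a conformal transformation with $f(0)=0$ and $d_j=\dist(0,\p D_j)$, then $|f'(0)|/4\le d_2/d_1\le 4|f'(0)|$ — together with the elementary fact that, for $t<T_z$, one has $\dist(z,H_t^{\rm pion})=\dist(z,\p H_t)$.

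First I would recall that for $t<T_z$ the map $g_t$ is a conformal transformation of $H_t$ onto $\Half$, so $g_t(z)\in\Half$ and $\dist(g_t(z),\p\Half)=\Im[g_t(z)]=Y_t$. Apply the Koebe corollary to the translated map $w\mapsto g_t(w+z)-g_t(z)$, which carries $H_t-z$ onto $\Half-g_t(z)$, fixes the origin, and has derivative $g_t'(z)$ there: with $d_1=\dist(z,\p H_t)$ and $d_2=Y_t$ this yields
\[
   \frac{|g_t'(z)|}{4}\ \le\ \frac{Y_t}{\dist(z,\p H_t)}\ \le\ 4\,|g_t'(z)| ,
\]
which, since $\distsub_{t,z}=Y_t/|g_t'(z)|$, rearranges exactly to $\distsub_{t,z}/4\le \dist(z,\p H_t)\le 4\,\distsub_{t,z}$.

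It then remains to identify $\dist(z,\p H_t)$ with $\dist(z,H_t^{\rm pion})$. Since $\p H_t\subset H_t^{\rm pion}$, one inequality is immediate. For the other, I would use the monotonicity of the family: if $0\le s\le t$ and $T_z>t$ then $T_z>s$, so $H_t\subset H_s$; hence the open ball $B\bigl(z,\dist(z,\p H_t)\bigr)$, which lies in $H_t$, also lies in $H_s$, and therefore $\dist(z,\p H_s)\ge \dist(z,\p H_t)$ for every $s\in[0,t]$. Taking the infimum over $s$ gives $\dist(z,H_t^{\rm pion})=\inf_{0\le s\le t}\dist(z,\p H_s)=\dist(z,\p H_t)$, which combined with the previous display proves \eqref{distsub2}.

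For the ``Hence'' statement I would let $t\uparrow T_z$: the left and right sides of \eqref{distsub2} converge to $\distsub_{\infty,z}/4$ and $4\,\distsub_{\infty,z}$ respectively (as $\distsub_{t,z}$ is decreasing with limit $\distsub_{\infty,z}$), while the middle term decreases — because $H_t^{\rm pion}$ increases in $t$ — to $\dist(z,H_\infty^{\rm pion})=\lim_{t\to T_z-}\dist[z,\p H_t]$, so the desired bound follows by passing to the limit. I expect no real obstacle here; the only step needing a moment's care is the equality $\dist(z,H_t^{\rm pion})=\dist(z,\p H_t)$, which is not entirely obvious when $\gamma$ fails to be simple (pioneer points may then lie strictly outside $\p H_t$), but the ball/monotonicity argument above disposes of it.
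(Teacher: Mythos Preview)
Your proof is correct and follows exactly the approach the paper intends: the paper states the lemma is ``an immediate corollary of the Koebe-$(1/4)$ theorem,'' and you have spelled out that corollary by applying the distance-ratio bound to the translated map $w\mapsto g_t(w+z)-g_t(z)$. Your monotonicity argument identifying $\dist(z,\p H_t)$ with $\dist(z,H_t^{\rm pion})$ is a careful justification of a claim the paper asserts without proof just before the lemma, so if anything you have provided more detail than the original.
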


The quantity $\distsub_t$ is sometimes called the {\em conformal radius}.
Note that \eqref{dec26.1}
can be rewritten as
\[          \p_t \distsub_t = - 2\dot a(t) \, \distsub_t
            \, [\pi \,H_\Half(0,Z_t)]^2 , \]
where $H$ denotes the Poisson kernel.

\section{Exercises for Lecture 2}

\begin{exercise}  \label{may3.exer0}
 Let
\[   D=\{x+iy: 0 < x < \infty, 0 < y < \pi \} 
\] be a half-infinite rectangle.  Use separation of
variables, as outlined below, to find $H_D(x+iy,iy')$
for $x > 0 , 0 < y,y' < \pi$.
\begin{enumerate}
\item  Find all
 functions of the form
\[               \phi (x+iy) = \phi_{ 1}(x) \, \phi_{ 2}(y) \]
that are harmonic in $D$ and vanish on the horizontal
boundaries of $D$. 
\item  Find the linear combination of these functions whose
boundary value is the $\delta$-function at $iy'$.
\end{enumerate}
\end{exercise}

\begin{exercise}  \label{may3.exer1}  Use Exercise
\ref{may3.exer0} and conformal invariance
to justify \eqref{may3.3} and \eqref{may3.4}.
\end{exercise}

\begin{exercise} \label{may3.exer2}
 Let $B_t$ be a standard complex Brownian motion
starting at the origin and $\tau = \tau_\Disk = \inf\{t:
|B_t| = 1\}$.  Find
\[         \lim_{\epsilon \rightarrow 0}
  \, \epsilon^{-1/2} \, \Prob\{B[0,\tau] \cap [\epsilon,1] =
  \emptyset \}. \]
\end{exercise}

\begin{exercise}  \label{exer.excursion}
Let $  D ,\phi_D$ be as in the beginning of
Section \ref{hcapsec}.  Let $B_t$ be a standard complex Brownian
motion starting at $z \in D$.  For each $R > 0$, let
\[         \sigma_R = \inf\{t: B_t \not\in D \mbox{ or }
              \Im[B_t] = R \}. \]
Show that
\[             \lim_{R \rightarrow \infty}
             R \, \Prob\{\Im[B_{\sigma_R}] = R \} =
           \phi_D(z) . \]
Conclude that if $x \in \R$ and $\dist(x, \Half \setminus D) > 0$,
then $\p_y \phi_D(x)$ is the probability that a Brownian motion
started at $x$ conditioned to stay in $\Half$ forever (i.e., 
a Brownian excursion) stays in $D$ for all time.
\end{exercise}

\begin{exercise}
Find $g_K$ for the following sets:
\begin{itemize}
\item  $K = (0,yi]$
\item  $K$ is the line segment from $0$ to $e^{i\theta}$.
\end{itemize}
\end{exercise}

\begin{exercise}  Suppose $g_t$ is the solution to
\eqref{chordalequationgen} with $a(t) = at$.
  Fix $T > 0$ and
let $V_t = U_{T-t} - U_T$.  Suppose $h_t$ is the
solution to the reverse time Loewner equation
\[               \dot h_t(z) = \frac{a}{V_t - h_t(z)}
       , \;\;\;\; 0 \leq t \leq T. \]
Show that $h_T(z) = g_T^{-1}(z+U_T)$.
\end{exercise}

\lecture{Schramm-Loewner evolution (SLE)}

We now return to the problem of determining possible
candidates for the scaling limit of discrete systems.  
We will focus on $\mu_D^\#(z,w)$, and we will not
worry about the parametrization. We start by considering
$\mu_\Half^\#(0,\infty)$.  If we parametrize the curve
so that the half-plane capacity grows linearly, then
we get conformal maps satisfying
 \begin{equation}  \label{chordalspec}
   \p_t g_t(z) = \frac{a}{g_t(z) - U_t} , \;\;\;\;
       g_0(z) = z , 
\end{equation} 
where $U_t$ is now random.  Conformal invariance and the domain
Markov property translate into conditions on $U_t$.  In fact,
they require $U_t$ to be continuous with stationary, independent
increments. It is well known that this implies that $U_t$ is
a one-dimensional Brownian motion. 

Since the process should be invariant under dilations of $\Half$,
we can see that $h_t(z) :=
r^{-1} \, g_{r^2t}(rz)$ should have the same
distribution as $g_t(z)$.   Note that if $g_t$ satisfies
\eqref{chordalspec}, then
\[   \p_t \, h_t(z) = \frac{a}{h_t(z) - U_t^*} , \]
where $U_t^* = r^{-1} \, U_{r^2t}$.  If $U_t$ is a Brownian
motion, then $U_t^*$ has the same distribution as $U_t$ provided
that the drift is zero.  If the drift is nonzero, they do not
have the same distribution. 
  
  We can choose the variance
of $U_t$ and we can choose the parameter $a$.  A simple time
change shows that, in fact, there is only one free parameter.
As originally defined, the parameter $a$ was chosen to be $2$
and $\kappa$ was used for the variance of the Brownian motion.
Here, we choose the variance of the Brownian motion to be $1$
and use $a$ as the free parameter.  Choosing $a=2/\kappa$ gives
$SLE_\kappa$.

\section {Definition}

\begin{definition}
The {\em chordal Schramm-Loewner evolution (from
$0$ to $\infty$ in $\Half$ parametrized so that
$\hcap(\gamma(0,t] ) = at$)} with parameter
$\kappa = 2/a$ is the solution of \eqref{chordalspec}
where $U_t = - B_t$ is a standard one-dimensional
Brownian motion with $B_0 = 0$.  The (random) curve
$\gamma$ that generates the maps $\{g_t\}$ is called
the $SLE_\kappa$ curve.
\end{definition}

It is not immediately obvious but has been proved that
$SLE_\kappa$ is generated by a curve.

If $z \in \Half$ and we write $Z_t(z) = X_t + i Y_t =
g_t(z) - U_t$, then \eqref{dec22.1} gives
\begin{equation}  \label{dec26.2}
dX_t = \frac{aX_t}{X_t^2 + Y_t^2} \, dt + dB_t, \;\;\;\;
         \p_tY_t = -\frac{aY_t}{X_t^2 + Y_t^2} =
   Y_t \, \frac{-a X_t^2 - aY_t^2}{(X_t^2 + Y_t^2)^2} . 
\end{equation}
We also let
\[  \distsub_t = \distsub_t(z) =
\frac{Y_t}{|g_t'(z)|} ,\;\;\;\;\;
   R_t = R_t (z) = \frac{X_t}{Y_t} , \]
\[  \Theta_t = \Theta_t(z)
  = \arg(Z_t) , \;\;\;\;
 \O_t = \O_t(z) = (R_t^2 + 1) = [\sin^2 \Theta_t]^{-1} , \]
Recall that $\distsub_t$ is related to the distance between
$z$ and $\R \cup \gamma(0,t]$.  
Using \eqref{may4.1}, we see that
\[              \p_t \distsub_t =  -\distsub_t \, \frac{2a \,Y_t^2}
                  {(X_t^2 + Y_t^2)^2}   .\]
It\^o's formula gives
\begin{equation}  \label{Theta}
 d\Theta_t = \frac{(  1- 2 a) \, X_t \, Y_t}{(X_t^2 + Y_t^2)^2}
  \, dt - \frac{ Y_t}{X_t^2 + Y_t^2} \, dB_t, 
\end{equation}
\[   d\O_t^{r} =\O_t^r
  \, \left[\frac{[2r^2 + (4a-1)r] \, X_t^2 + r\, Y_t^2}
    {(X_t^2 + Y_t^2)^2} \, dt + \frac{2r
  \, X_t}{X_t^2 + Y_t^2} \, dB_t \right].\]
It is worth remembering that $Y_t,\distsub_t$ are
differential functions of $t$ and the formulas for them are valid
for any driving function $U_t$.  However, $X_t,R_t,\O_t$ have
non-trivial quadratic variation.
If we let
\[   N_t = \distsub_t^{-u/a} \, Y_t^{\theta/a} \, \O_t^r,\]
then the product rule gives
\[  dN_t = N_t \, \left[\frac{[2r^2 + (4a-1)r-\theta]\,X_t^2 + 
   [2u+r-\theta]\, Y_t^2}
   {(X_t^2 + Y_t^2)^2} \, dt  + \frac{2r \, X_t}{X_t^2 + Y_t^2}
\, dB_t\right]. \]
If the $dt$ term is zero, this is a local martingale.  Hence
we get the following.

\begin{proposition}  If $r \in \R$ and 
\[          u = u(r) = r^2 + (2a-1) \, r , \]
Then
\[    M_t = M_t(z) = \distsub_t^{-\frac ua} \, Y_t^{\frac{2u+r}{a}}
  \, O_t^r , \]
is a local martingale satisfying
\[                   dM_t =  \frac{2rX_t}{X_t^2 + Y_t^2}
  \, M_t \, dB_t. \]
\end{proposition}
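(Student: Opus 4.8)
The plan is to verify directly that the stated $M_t$ is a special case of the $N_t$ introduced just above, and that the choice $u = r^2 + (2a-1)r$ kills the $dt$-coefficient. First I would recall the general formula
\[ N_t = \distsub_t^{-u/a} \, Y_t^{\theta/a} \, \O_t^r, \qquad dN_t = N_t\left[\frac{[2r^2 + (4a-1)r-\theta]\,X_t^2 + [2u+r-\theta]\,Y_t^2}{(X_t^2 + Y_t^2)^2}\,dt + \frac{2r\,X_t}{X_t^2+Y_t^2}\,dB_t\right], \]
which was derived by the product rule from the SDEs \eqref{dec26.2}, the formula $\p_t\distsub_t = -\distsub_t \cdot 2aY_t^2/(X_t^2+Y_t^2)^2$, and the It\^o formula $d\O_t^r = \O_t^r\big[\,(\,[2r^2+(4a-1)r]X_t^2 + rY_t^2\,)/(X_t^2+Y_t^2)^2\,dt + 2rX_t/(X_t^2+Y_t^2)\,dB_t\,\big]$.

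Next I would impose the two conditions needed to make the drift vanish identically in $(X_t,Y_t)$. The $X_t^2$-coefficient forces $\theta = 2r^2 + (4a-1)r$, and the $Y_t^2$-coefficient forces $\theta = 2u + r$. Eliminating $\theta$ gives $2u + r = 2r^2 + (4a-1)r$, i.e. $u = r^2 + (2a-1)r$, which is exactly the hypothesis of the proposition. With this value of $u$ one gets $\theta = 2u + r$, so $\theta/a = (2u+r)/a$; substituting $u = r^2+(2a-1)r$ confirms $\distsub_t^{-u/a} = \distsub_t^{-u/a}$ and $Y_t^{\theta/a} = Y_t^{(2u+r)/a}$, matching the displayed $M_t = \distsub_t^{-u/a}\,Y_t^{(2u+r)/a}\,\O_t^r$ (with $O_t$ understood as $\O_t$). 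At this point $dN_t$ reduces to $N_t\cdot \frac{2rX_t}{X_t^2+Y_t^2}\,dB_t$, which is the claimed SDE for $M_t$.

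Finally I would note that an expression whose It\^o differential has vanishing $dt$-part is by definition a local martingale, completing the proof; one may add the routine remark that $M_t$ is a well-defined semimartingale for $t < T_z$ since $Y_t > 0$, $\distsub_t > 0$, and $\O_t \ge 1$ there, so all the powers and the product rule are legitimate. The only real content is the bookkeeping in the first paragraph — establishing the general $dN_t$ formula — but that has already been done in the excerpt, so the proof here is essentially the two-line algebraic elimination of $\theta$ and $u$. The one place to be slightly careful is making sure the $Y_t^{\theta/a}$ exponent is rewritten correctly as $(2u+r)/a$ using the relation $\theta = 2u+r$ rather than the other relation $\theta = 2r^2+(4a-1)r$; both are valid once $u$ is chosen as in the hypothesis, but only the former displays the exponent in the form stated. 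I do not anticipate any genuine obstacle.
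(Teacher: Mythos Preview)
Your proposal is correct and follows essentially the same approach as the paper: the paper derives the general $dN_t$ formula by the product rule, observes that the $dt$-term vanishes precisely when $\theta = 2r^2+(4a-1)r$ and $\theta = 2u+r$, and then states the proposition as an immediate consequence. Your write-up is if anything slightly more explicit in carrying out the elimination of $\theta$ and matching the exponent of $Y_t$, but there is no substantive difference.
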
 
 
\begin{example}  \label{may4.example1}

  $r =  \frac 12 - 2a, u = a - \frac 14$ gives the local martingale
\[            M_t = \distsub_t^{d-2} \, \O_t^{\frac 12 - 2a} , \]
where
\[               d = 1 + \frac 1{4a} = 1 + \frac \kappa 8. \]
\end{example}

\begin{example} \label{may4.example2}

   $r= -2a, u = 2a$ gives the local martingale
\[           M_t = |g_t'(z)|^2 \, O_t^{-2a}. \]
\end{example}

\begin{example} \label{may4.example3}

   Let 
\[  
     r = -b = \frac{1-3a}{2}, \;\;\; u = a\tilde b=\frac{b\,(1-a)}{2 } . \]
    This gives the
martingale
\begin{equation}  \label{may15.2}
          M_t =  
\distsub_t^{-\tilde b} \, Y_t^{-b} \, O_t^{-b} =
|g_t'(z)|^{\tilde b} \, Y_t^{-\tilde b} \, [\pi H_\Half(0, Z_t)]^b . 
\end{equation}
Here $H_\Half$ denotes the Poisson kernel in $\Half$,
\[              H_\Half(0,x+iy) = \frac{y}{\pi(x^2 + y^2)} . \] 

\end{example}

\section{Phases}

Recall that a curve is simple if it has no self-intersections.

\begin{theorem} \label{simpletheorem}
 If $\kappa \leq 4$, $SLE_\kappa$ paths 
are simple.  If $\kappa > 4$, $SLE_\kappa$ paths have 
self-intersections.  In fact, if $\kappa > 4$ for every $s < t$
there exist $s < t_1 < t_2 < t$ with $\gamma(t_1) = \gamma(t_2)$.
\end{theorem}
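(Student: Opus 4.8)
The plan is to analyze the behavior of the point $Z_t(x) = X_t + iY_t$ for a real initial point $x \in \R \setminus\{0\}$, and to use the phase transition in the Bessel-like process $X_t/Y_t$ (equivalently $\Theta_t$) at $\kappa = 4$. Recall that for $z = x$ real we have $Y_0 = 0$, and the curve $\gamma$ swallows $x$ at the time $T_x = \inf\{t : Z_t \to 0\}$. The key object is $R_t = X_t/Y_t$, or better the angle $\Theta_t = \arg Z_t$; from \eqref{Theta} we get, after a time change $ds = (X_t^2+Y_t^2)^{-1}\,dt$ (so that the driving noise becomes a standard Brownian motion in the new clock), that $\Theta$ behaves like a diffusion on $(0,\pi)$ with drift $(1-2a)\cot\Theta$ and unit diffusion coefficient, i.e. essentially $d\Theta = (1-2a)\cot\Theta\, ds + d\tilde B_s$. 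Comparing with the standard dichotomy for such angular diffusions: the endpoints $0$ and $\pi$ are reached in finite time iff the drift coefficient $(1-2a)$ is small enough, and the threshold is $1 - 2a < 1/2$, i.e. $a > 1/4$, i.e. $\kappa = 2/a > 8\cdot\frac{1}{8}\cdot\ldots$ — let me instead phrase it via the known fact: $a = 2/\kappa$, and the condition separating "hits the boundary of $(0,\pi)$" from "never hits it" is at $a = 1/2$, i.e. $\kappa = 4$.

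The first step is therefore: \textbf{show that for $\kappa \le 4$ and $x \ne 0$ real, $Z_t(x)$ never hits $0$ in finite time} (so $x$ is never swallowed, $T_x = \infty$), while \textbf{for $\kappa > 4$ it is swallowed in finite time}. This is a one-dimensional diffusion computation: write down the SDE for $X_t$ (from \eqref{dec26.2} with $Y_t \equiv 0$ only formally — one must be careful, so better work with $R_t$ or with $|Z_t|^2$ and $\Theta_t$ jointly), identify the relevant scale function / Feller test for explosion to the boundary point $\Theta \in \{0,\pi\}$, and read off the threshold. Equivalently one can use the local martingale $M_t$ from Example \ref{may4.example1}, $M_t = \distsub_t^{d-2}\O_t^{1/2-2a}$: when $\kappa \le 4$ one shows $M_t$ stays bounded away from the bad regime, forcing $\distsub$ (hence the distance from $x$ to the curve) to stay positive.

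The second step is to \textbf{upgrade the "point $x$ is swallowed" statement to "the curve has a self-intersection"}, and in fact to get the strong local statement claimed. For $\kappa > 4$: fix $s$; apply the domain Markov property and scaling at time $s$, so it suffices to show that $\gamma$ restarted from $\gamma(s)$ has a self-intersection before any prescribed time, and by scaling before time $t - s$ for any $t > s$. Now use Step 1: pick a real point $x = \gamma(s+\epsilon') + (\text{small real perturbation})$ — more precisely, note that immediately after time $s$ the tip $U_{s+\epsilon}$ moves along $\R$ under $g_{s+\epsilon}$, and the preimage under $g$ of points just to the left and right of $U$ that later get swallowed correspond to the curve winding back to touch itself. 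Concretely: by Step 1 there is (a.s.) a real point $x_0 \ne U_s$ with $T_{x_0} < \infty$; the swallowing of $x_0$ means $x_0$ lies in a bounded component cut off by $\R \cup \gamma(0, T_{x_0}]$, which (since $\gamma$ starts on $\R$) forces $\gamma$ to return to the real line or to itself; a short topological argument (using that $\gamma$ generates $g_t$, so $H_t$ is the unbounded component) shows the curve must have a self-touching in the time interval where that bounded component is formed. Iterating / localizing this with the Markov property gives $s < t_1 < t_2 < t$ with $\gamma(t_1) = \gamma(t_2)$.

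For $\kappa \le 4$, \textbf{simplicity} follows from Step 1 as well: if $\gamma(t_1) = \gamma(t_2)$ for some $t_1 < t_2$, then points in the region enclosed between $\gamma[t_1,t_2]$ and (a piece of) the curve/real line would be swallowed — in particular some real point or some earlier $\gamma(s)$ would fail $T_\cdot = \infty$ — contradicting that nothing is swallowed. (One also needs $\gamma(t) \in \Half$, not $\R$, for $t > 0$: this again is the $\Theta_t \not\to 0,\pi$ statement applied to the tip.) \textbf{The main obstacle} I expect is the careful justification of this last topological step — translating "$Z_t(x) \to 0$" (an analytic statement about the conformal maps) into "$x$ is enclosed by $\R \cup \gamma(0,t]$", and conversely deducing an honest self-intersection point $\gamma(t_1) = \gamma(t_2)$ rather than merely $\gamma$ touching $\R$ — together with verifying that the Feller/explosion computation genuinely has its threshold at $a = 1/2$. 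The SDE analysis itself is routine once set up correctly; the subtlety is entirely in handling the boundary point $x \in \R$, where $Y_0 = 0$ makes \eqref{dec26.2} degenerate, so one really must work with $\Theta_t$ or $R_t$ and a time change rather than with $X_t, Y_t$ separately.
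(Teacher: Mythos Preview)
Your overall strategy matches the paper's exactly: first prove Proposition~\ref{realprop} (for real $x \neq 0$, $T_x < \infty$ iff $\kappa > 4$), then deduce Theorem~\ref{simpletheorem} from it via the domain Markov property and scaling. The paper carries out the first step and leaves the second as Exercise~\ref{ex.phase}, so your Step~2 sketch is appropriate and roughly what is intended.

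However, you have overcomplicated Step~1 through a misconception. For a \emph{real} starting point $x > 0$, the flow $g_t(x)$ stays real for all $t < T_x$ (the Loewner equation with real driving function preserves $\R$), so $Z_t(x) = g_t(x) - U_t$ is itself real-valued and satisfies directly the Bessel equation~\eqref{Bessel},
\[
  dZ_t = \frac{a}{Z_t}\,dt + dB_t, \qquad Z_0 = x.
\]
There is no degeneracy: $Y_t \equiv 0$ is exact, not ``only formal,'' and $\Theta_t$, $R_t = X_t/Y_t$ are not even defined here. The classical Bessel dichotomy ($Z_t$ hits $0$ in finite time with probability $1$ if $a < 1/2$ and probability $0$ if $a \geq 1/2$) is the entire content of the proof of Proposition~\ref{realprop}. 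The ``subtlety in handling the boundary point $x \in \R$'' that you flag as the main obstacle is in fact the main simplification: the two-dimensional problem collapses to a one-dimensional diffusion from the outset, with no time change or angular variable needed.

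For Step~2, the clean version for $\kappa \leq 4$ applies domain Markov at each rational $s$: under $g_s$ the continued curve is $SLE_\kappa$ in $\Half$, hence stays in $\Half$ by Proposition~\ref{realprop}, so $\gamma(s,\infty) \subset H_s$ and in particular $\gamma(s,\infty)$ avoids $\R \cup \gamma(0,s]$; intersecting over rational $s$ gives simplicity. For $\kappa > 4$, one combines domain Markov at rational $s$ with scaling (the image curve returns to $\R$ on both sides of $U_s$, forcing a crossing of $\gamma(0,s]$) to produce self-intersections in every interval.
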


\begin{remark} To be more precise, the  theorem states 
 that the facts hold ``with probability one''.  We will
feel free to leave this phrase out of statements of theorems.
\end{remark}

We will prove a related fact and leave the proof of the theorem
as an exercise (Exercise \ref{ex.phase}).

\begin{proposition}  \label{realprop}
If $\kappa \leq 4$, $\gamma(0,\infty) \subset
\Half$.  If $\kappa > 4,$ $\gamma(0,\infty) \cap \R \neq \emptyset$.
\end{proposition}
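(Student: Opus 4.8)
The strategy is to track a single real point $x \in \R \setminus \{0\}$ under the flow and determine whether it is swallowed in finite time. Fix $x > 0$ (the case $x < 0$ is symmetric). Since $x$ is real, $Y_t(x) = 0$ for all $t < T_x$, and writing $X_t = X_t(x) = g_t(x) - U_t$, equation \eqref{dec26.2} degenerates to the one-dimensional SDE
\[
   dX_t = \frac{a}{X_t} \, dt + dB_t, \qquad X_0 = x .
\]
This is a Bessel-type process: comparing with the SDE for a Bessel process of dimension $\delta$, namely $dX_t = \frac{\delta - 1}{2 X_t}\,dt + dB_t$, we read off $\delta - 1 = 2a$, i.e. $\delta = 1 + 2a = 1 + 4/\kappa$. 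The point $x$ is swallowed precisely at $T_x = \inf\{t : X_t = 0\}$ (with the usual convention that once $X$ hits $0$ it is absorbed / the point leaves $H_t$), and the classical dichotomy for Bessel processes is that the origin is reached in finite time with probability one iff $\delta < 2$, and is never reached (a.s.) iff $\delta \geq 2$. Now $\delta < 2 \iff 1 + 4/\kappa < 2 \iff \kappa > 4$, and $\delta \geq 2 \iff \kappa \leq 4$. So for $\kappa \leq 4$ no real point other than $0$ is ever swallowed, hence $\R \setminus \{0\}$ stays on the boundary and $\gamma(0,\infty) \subset \overline{\Half}$; combined with the fact that $\gamma(0,\infty) \subset \overline\Half$ and $\gamma$ can only touch $\R$ at a point it swallows, we get $\gamma(0,\infty) \subset \Half$. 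For $\kappa > 4$, take any $x > 0$; then $T_x < \infty$ a.s., and the only way a real point leaves $H_t$ is to be hit by the curve, so $\gamma(T_x) \in \R$ (or $\gamma$ has hit $\R$ at some earlier time), giving $\gamma(0,\infty) \cap \R \neq \emptyset$.

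To carry this out rigorously I would proceed in the following order. First, record the reduction to the one-dimensional SDE for $X_t(x)$ and note that $T_x$ as defined by the Loewner equation coincides with the first hitting time of $0$ by $X$; this uses that for real $z$ the only obstruction to solving \eqref{chordalloewner} is the denominator vanishing. Second, prove the Bessel dichotomy at the level I need: rather than quoting the full Bessel theory, I would use a scale function / Feller test argument. The scale function for $dX_t = (a/X_t)\,dt + dB_t$ is obtained by solving $s''(x) + (2a/x)\,s'(x) = 0$, giving $s'(x) = x^{-2a}$ and hence $s(x) = x^{1-2a}/(1-2a)$ for $a \neq 1/2$ (and $s(x) = \log x$ for $a = 1/2$, i.e. $\kappa = 4$). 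For $\kappa > 4$ we have $2a = 4/\kappa < 1$, so $s(0+)$ is finite while $s(\infty) = \infty$; comparing with a suitable boundary exit time (or by a direct supermartingale/optional-stopping computation on $s(X_{t \wedge \tau})$) shows $X$ reaches a neighborhood of $0$ and then, by a further scaling/Feller argument, actually hits $0$ in finite time a.s. For $\kappa \le 4$ we have $2a \ge 1$, so $s(0+) = -\infty$ (or $= -\infty$ in the log case), and $s(X_{t \wedge T_x})$ is a local martingale bounded below that cannot reach the value $s(0+) = -\infty$, so $T_x = \infty$. Third, translate the analytic conclusion back into a statement about the curve, as in the previous paragraph, using the definition of "generated by a curve" from Section~\ref{hcapsec} (a real point $x$ leaves $H_t$ only when $\gamma$ reaches it, since $H_t$ is the unbounded component of $\Half \setminus H_t^{\rm pion}$ and $H_t^{\rm pion} = \R \cup \gamma(0,t]$).

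The main obstacle is the middle step: making the Bessel dichotomy genuinely rigorous rather than invoking it as folklore, in particular handling the degenerate diffusion coefficient near $0$ and the borderline case $\kappa = 4$ (where $2a = 1$ exactly and one needs the logarithmic scale function). The cleanest route is probably to time-change $X_t^2$ to a Bessel-squared process and appeal to the well-known hitting behaviour of Bessel-squared processes of dimension $\delta = 1 + 4/\kappa$, or alternatively to invoke Feller's test for explosions directly on the SDE for $X$; either way the one-sidedness (we only care about hitting $0$, not $+\infty$) simplifies matters, since $s(\infty) = \infty$ in all cases means $X$ never explodes to $+\infty$, and the whole question reduces to the finiteness of $s(0+)$. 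A secondary, more topological point is justifying that for $\kappa \le 4$ the curve actually stays strictly inside $\Half$ and not merely in $\overline\Half$ — but this is immediate once one knows no real point is swallowed, since $\gamma(t) \in \R$ would force $g_t(\gamma(t)) = U_t$ to swallow the corresponding preimage, contradicting $T_x = \infty$ for every real $x \ne 0$ together with the behaviour of $U_t$ near $0$.
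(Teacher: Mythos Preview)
Your proposal is correct and follows essentially the same route as the paper: reduce the question of whether the real point $x$ is ever swallowed to the SDE $dX_t = (a/X_t)\,dt + dB_t$, recognize this as a Bessel equation, and invoke the dichotomy that $X_t$ hits $0$ in finite time with probability $1$ or $0$ according as $a<1/2$ or $a\ge 1/2$ (equivalently $\kappa>4$ or $\kappa\le 4$). The paper's proof is terser---it simply cites the Bessel hitting dichotomy as ``well known''---whereas you sketch the scale-function/Feller-test argument and the translation back to the curve in more detail; but the substance is the same.
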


\begin{proof}  Let $x > 0$.  Then $\gamma(0,\infty) \cap [x,\infty)
\neq \emptyset$ if and only if $T_x < \infty$.
 We will show that $\Prob\{T_x < \infty\}$
is $1$ if $\kappa > 4$ and equals $0$ if $\kappa \leq 4$.  Let $Z_t =
Z_t(x) = g_t(x) - U_t$.  Then the Loewner equations tells us that
\begin{equation}  \label{Bessel}
              dZ_t = \frac{a}{Z_t} \, dt + d B_t,\;\;\;\;
   Z_0 = x . 
\end{equation}
This is the Bessel equation, and it is well known that  
the probability that $Z_t$ reaches the origin in finite time
equals $1$ or $0$ depending on whether $a < 1/2$ or
$a \geq 1/2$.
\end{proof}

\begin{remark}  If $W_t$ is a $d$-dimensional Brownian motion
starting at $z \in \R^d \setminus\{0\}$, then $Z_t = |W_t|$
satisfies \eqref{Bessel} with $a = \frac{d-1}{2}, x = |z|,$
and $B_t$ a standard Brownian motion.
\end{remark}

\begin{theorem} \label{theoremphase2}
  If $\kappa \geq 8$, then $\gamma[0,\infty)
= \overline{\Half}$, i.e., $\gamma$ is plane-filling.  If $\kappa
< 8$, then for each $z \in \Half$, $\Prob\{z \in \gamma(0,\infty)\}
=0 $.
\end{theorem}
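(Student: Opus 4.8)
The plan is to handle each interior point $z=x+iy\in\Half$ separately and to reduce the statement to the behaviour of a single one--dimensional diffusion, the argument $\Theta_t=\arg Z_t$. With the usual notation $Z_t=g_t(z)-U_t=X_t+iY_t$ and $\distsub_t=Y_t/|g_t'(z)|$, I would first use the Koebe-type bound \eqref{distsub2}: $\distsub_t$ is comparable to $\dist(z,H_t^{\rm pion})=\dist(z,\R\cup\gamma(0,t])$, and since $\dist(z,\R)=y>0$ is fixed and the $SLE_\kappa$ curve is continuous, the event $\{z\in\gamma(0,\infty)\}$ coincides, up to a null set, with $\{\distsub_\infty=0\}$, where $\distsub_\infty:=\lim_{t\to T_z-}\distsub_t$ (the limit exists since $\distsub_t$ decreases). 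From \eqref{Theta} one reads $d\langle\Theta\rangle_t=Y_t^2(X_t^2+Y_t^2)^{-2}\,dt$, and \eqref{dec26.1} gives $-d\log\distsub_t=2a\,d\langle\Theta\rangle_t$, so $\distsub_t=y\exp(-2a\langle\Theta\rangle_t)$ and $\distsub_\infty=0$ if and only if $\langle\Theta\rangle_\infty=\infty$. Thus the whole theorem reduces to deciding whether the total quadratic variation of $\Theta$ is finite.

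Next I would reparametrize $\Theta$ by its own quadratic variation, $u=\langle\Theta\rangle_t$. By \eqref{Theta} and the time-change theorem for continuous martingales, in the new clock $\Theta$ becomes the diffusion on $(0,\pi)$ solving $d\Theta_u=(1-2a)\cot\Theta_u\,du+dW_u$ for a standard Brownian motion $W$, run over $u\in[0,u_{\max})$ with $u_{\max}=\langle\Theta\rangle_\infty$ (the drift $(1-2a)X_t/Y_t=(1-2a)\cot\Theta_t$ appears after dividing the drift in \eqref{Theta} by $d\langle\Theta\rangle_t/dt$). Since $\cot\theta\sim1/\theta$ near $\theta=0$, and symmetrically near $\theta=\pi$, this diffusion behaves near the boundary like a Bessel process of dimension $\delta=3-4a=3-8/\kappa$. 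Hence $\Theta$ reaches $\{0,\pi\}$ in finite $u$-time, a.s., exactly when $\delta<2$, i.e. when $\kappa<8$; for $\kappa\ge8$ it never reaches $\{0,\pi\}$, is bounded hence non-exploding, and is defined for all $u\ge0$.

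For $\kappa<8$ the conclusion is then immediate: $\Theta_u\in(0,\pi)$ for all $u<u_{\max}$ while the diffusion hits $\{0,\pi\}$ at an a.s. finite time, so $u_{\max}<\infty$ a.s., hence $\distsub_\infty=y\exp(-2au_{\max})>0$ a.s. and $z\notin\gamma(0,\infty)$ a.s.; this covers every $\kappa<8$, including $\kappa\le4$ where $T_z=\infty$. For $\kappa\ge8$ I would instead establish $u_{\max}=\infty$ a.s. First, every interior point is a.s. eventually swallowed once $\kappa>4$ (a mild strengthening of Proposition \ref{realprop}), so $T_z<\infty$ a.s.; the equation $\dot Z_t=a/Z_t$ then forces $Z_t\to0$ as $t\to T_z-$, whence $Y_{T_z}=0$. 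Integrating $d\log Y_t=-a|Z_t|^{-2}\,dt$ and changing variables to the $u$-clock gives $\int_0^{u_{\max}}\csc^2\Theta_u\,du=a^{-1}\log(y/Y_{T_z})=\infty$. But for $\kappa\ge8$ the path of $\Theta$ stays away from $\{0,\pi\}$, so $\csc^2\Theta_u$ is bounded on every finite $u$-interval; divergence of the integral therefore forces $u_{\max}=\infty$, so $\distsub_\infty=0$ and $z\in\gamma(0,\infty)$ a.s. Finally, $\Prob\{z\in\gamma(0,\infty)\}=1$ for all $z$ together with Fubini gives that a.s. $\gamma(0,\infty)$ has full Lebesgue measure in $\Half$; a topological argument — using continuity of the curve and that the Loewner hulls $K_t=\Half\setminus H_t$ exhaust $\overline\Half$ (and, for $\kappa\ge8$, equal $\gamma[0,t]$) — upgrades this to $\gamma[0,\infty)=\overline\Half$ a.s.

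The delicate half is $\kappa\ge8$. The conceptual core — identifying the near-boundary behaviour of $\Theta$ with a Bessel process of dimension $3-8/\kappa$ and reading off the transition at $\kappa=8$ — is robust, but turning it into $u_{\max}=\infty$ needs the two external inputs flagged above: that interior points are a.s. swallowed in finite time when $\kappa>4$, and the passage from ``a.e. point is visited'' to genuine surjectivity of the continuous curve onto $\overline\Half$. The borderline case $\kappa=8$ needs extra care since the relevant Bessel dimension is exactly $2$: one must use that Bessel$(2)$ avoids $0$ yet is neighbourhood-recurrent, so that $\int\csc^2\Theta_u\,du$ still diverges as $u\to\infty$, whereas the local martingale $\distsub_t^{d-2}\O_t^{1/2-2a}$ of Example \ref{may4.example1} — which for $\kappa\ne8$ is essentially the scale function of this Bessel diffusion and could organize the argument — degenerates to a constant at $\kappa=8$ and gives no information.
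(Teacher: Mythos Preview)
Your argument is correct and shares the paper's core idea: reduce the question to the one--dimensional diffusion $d\Theta_u=(1-2a)\cot\Theta_u\,du+dW_u$ and compare it near the endpoints with a Bessel process of dimension $3-8/\kappa$, with the phase transition at dimension $2$, i.e.\ $\kappa=8$. The paper reaches this SDE via two successive time changes (first making $\log Y_t$ linear, then normalizing the diffusion coefficient); your single time change by the quadratic variation of $\Theta$ is exactly their composition. The main organizational difference is that your $\kappa<8$ argument is self-contained: the paper explicitly declines to finish the range $4<\kappa<8$ and defers it to Theorem~\ref{dimensiontheorem} (the Green's function estimate), whereas your Bessel hitting-time argument covers all $\kappa<8$ uniformly, including $\kappa\le 4$, and yields $\distsub_\infty>0$ directly. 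For $\kappa\ge 8$ your route through the divergence of $\int_0^{u_{\max}}\csc^2\Theta_u\,du$ is a clean variant of the paper's conclusion that $\hat\Theta$ cannot converge to $\{0,\pi\}$. Two small remarks: the line ``$\dot Z_t=a/Z_t$'' is imprecise (it is $g_t(z)$, not $Z_t$, that satisfies an ODE), though your conclusion $Z_t\to 0$ at $T_z$ is correct; and both the paper and you leave the final step for $\kappa\ge 8$---upgrading ``each fixed $z$ is a.s.\ hit'' to ``a.s.\ the curve is surjective onto $\overline\Half$''---as a flagged loose end.
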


\begin{proof} 
Suppose $\kappa  = 2/a > 4$ and let $z \in \Half$.  By using
the previous theorem and scaling we can see that $\Prob\{T_z < \infty\}
 = 1 $.  Suppose $\gamma(T_z) \neq z$.  Then a straightforward
argument (Exercise \ref{ex.straight}) shows that $\Theta_{T_z-}
 \in \{0,\pi\}$, where 
 $\Theta_t = \Theta_t (z). $   
It will be convenient to make the   time change 
\[  \hat X_t = X_{\sigma(t)} , \;\;\;\;  \hat Y_t = Y_{\sigma(t)},
   \]
where
\[        \p_t \sigma(t) = 
  {X_{\sigma(t)}^2 + Y_{\sigma(t)}^2 }   . \]
\[    \p_t  \hat Y_t = \p_t Y_{\sigma(t)}
                = \p_t\sigma(t) \, Y_{\sigma(t)} \, \frac{-a}
  {X_{\sigma(t)}^2 + Y_{\sigma(t)}^2} = - a \, \hat Y_t , \]
i.e., $\hat Y_t = e^{-at}.$  Under this time change $\log \hat Y_t$
is a deterministic linear function.  Time $T_z$ in the usual
parametrization becomes time $\infty$ in the time change.
Under this time change \eqref{Theta} becomes
\[    d \hat \Theta_t = ( \frac 12-a) \, 
  \sin (2\hat \Theta_t) \, dt   +  \sin \hat \Theta_t \, d \hat B_t . \]
for a standard Brownian motion $\hat B_t$.  We have
reduced the problem to a question about a one-dimensional diffusion.
One can show that if $a \leq 1/4$ it is {\em not} the case that
$\sin \hat \Theta_t \rightarrow 0$ as $t \rightarrow \infty$.  We will
not prove this but let us sketch why this is true.  If we do a time
change, this equation looks like
\[     d \tilde \Theta_t = ( \frac 12-a) \, 
  \frac{\sin (2\tilde \Theta_t)}{\sin^2  \tilde \Theta_t} \, dt 
  +  d \tilde B_t . \]
For $\tilde \Theta_t$ near zero this looks like
\[    d \tilde \Theta_t  = \frac{(1 - 2a)}{\tilde \Theta_t} \, dt
  +  d \tilde B_t . \]
by comparison with a Bessel equation, we see that to keep this
from reaching the origin we need $1 - 2a \geq 1/2$ or $a \leq
1/4$.

This is not quite enough to prove the statement for $4 < \kappa < 8$.
Since this follows from Theorem \ref{dimensiontheorem} in the next
section, we will not bother to give the complete details here.
\end{proof}

If $\kappa < 8$, then $\Theta_{T_z-} = \hat \Theta_\infty 
\in \{0,\pi\}$.  Let $\phi(z) = \Prob\{\Theta_\infty = \pi\}$.
Scaling shows that $\phi$ depends only on the angle.  Since
$\phi(\hat \Theta_t)$ is a martingale, we can use It\^o's
formula to conclude that $\phi(\theta)$ satisfies
\[ 2\,(1-2a) \, \phi'(\theta)\, \cos \theta \,   
 +  \phi''(\theta) \, \sin \theta 
  = 0 , \;\;\;\; \phi(0) = 0, \phi(\pi) = 1. \]

\section{Dimension of the path}

In this section we will discuss the dimension of the path $\gamma$.
We will not give all the details.

\begin{theorem}  \label{dimensiontheorem}
There exists $c_*$ such that if $a > 1/4$,
then for all $z \in \Half$
\[           
             \Prob\{\distsub_\infty(z) \leq \delta\}
\sim  c_* \, G(z)
 \, \delta^{2-d},\;\;\;\; \delta \rightarrow 0, \]
where $d = 1 + \frac 1{4a}$
and $G$ is the ``Green's function'' for chordal $SLE_{2/a}$
defined by
\[               G(y(x+i)) = y^{d-2} \, (x^2 + 1)^{\frac 12 - 2a}. \]

\end{theorem}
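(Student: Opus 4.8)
\emph{Reduction via the local martingale.} The plan is to run the Green's-function argument off the local martingale $M_t = \distsub_t^{d-2}\,\O_t^{1/2-2a}$ of Example~\ref{may4.example1}, which satisfies $dM_t = \tfrac{2rX_t}{X_t^2+Y_t^2}\,M_t\,dB_t$ with $r = \tfrac12-2a$. Writing $z = y(x+i)$, so $\distsub_0 = \Im z = y$ and $\O_0 = (X_0/Y_0)^2+1 = x^2+1$, we have $M_0 = y^{d-2}(x^2+1)^{1/2-2a} = G(z)$. Since $\distsub_t$ is continuous and decreasing, $\{\distsub_\infty(z)\le\delta\} = \{\tau_\delta<\infty\}$ where $\tau_\delta = \inf\{t:\distsub_t=\delta\}$. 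On $[0,\tau_\delta]$ one has $\distsub_t\ge\delta$ and $\O_t\ge1$, and both exponents $d-2$ and $\tfrac12-2a$ are negative precisely because $a>1/4$; hence $M_{t\wedge\tau_\delta}\le\delta^{d-2}$ is a bounded martingale. On $\{\tau_\delta=\infty\}$ we have $\distsub_\infty>0$ but $\O_t\to\infty$ (since $\Theta_\infty\in\{0,\pi\}$ by Theorem~\ref{theoremphase2}), so $M_t\to0$ there; optional stopping and bounded convergence then give $\E[M_{\tau_\delta}\mathbf{1}\{\tau_\delta<\infty\}] = M_0 = G(z)$. Therefore $\Prob^*(A):=G(z)^{-1}\,\E[M_{\tau_\delta}\mathbf{1}_A\mathbf{1}\{\tau_\delta<\infty\}]$ is a probability measure on $\mathcal F_{\tau_\delta}$ supported on $\{\tau_\delta<\infty\}$, equivalent to $\Prob$ restricted to that event, with $d\Prob/d\Prob^* = G(z)/M_{\tau_\delta} = G(z)\,\delta^{2-d}\,\O_{\tau_\delta}^{2a-1/2}$ there. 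Integrating this against $\Prob^*$ yields the exact identity
\[ \Prob\{\distsub_\infty(z)\le\delta\} \;=\; G(z)\,\delta^{2-d}\,\E^*\!\bigl[\O_{\tau_\delta}^{\,2a-1/2}\bigr], \]
so everything reduces to showing $\E^*[\O_{\tau_\delta}^{2a-1/2}]\to c_*\in(0,\infty)$ with $c_*$ independent of $z$.

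\emph{The tilted angle diffusion.} Under $\Prob^*$, Girsanov gives $dB_t = d\tilde B_t + \tfrac{2rX_t}{X_t^2+Y_t^2}\,dt$ on $[0,\tau_\delta)$; substituting into \eqref{Theta} and applying the time change $\partial_t\sigma(t) = X_{\sigma(t)}^2+Y_{\sigma(t)}^2$ from the proof of Theorem~\ref{theoremphase2} (under which $\hat Y_t = e^{-at}$ as before), the coefficient $\tfrac12-a$ of the $\Prob$-equation is replaced by $a$, so that $\hat\Theta_t := \Theta_{\sigma(t)}$ satisfies
\[ d\hat\Theta_t \;=\; a\,\sin(2\hat\Theta_t)\,dt \;+\; \sin\hat\Theta_t\,d\hat B_t \]
for a $\Prob^*$-Brownian motion $\hat B$. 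Near $\theta=0$ the drift is $\approx 2a\theta$ against diffusion coefficient $\approx\theta$, so $\log\hat\Theta_t$ has drift $\approx 2a-\tfrac12>0$ (again using $a>1/4$), and symmetrically near $\pi$; hence this diffusion on $(0,\pi)$ is positive recurrent and exponentially ergodic, with invariant probability $\mu$ whose density is $\asymp\theta^{4a-2}$ near $0$ (and $\asymp(\pi-\theta)^{4a-2}$ near $\pi$). Under the same time change $\O_{\sigma(t)} = 1/\sin^2\hat\Theta_t$ and $\hat\distsub_t = \distsub_0\exp\{-2a\int_0^t\sin^2\hat\Theta_s\,ds\}$, so $\tau_\delta$ corresponds to the time $\hat\tau_\delta$ at which the additive functional $A_t=\int_0^t\sin^2\hat\Theta_s\,ds$ first reaches $\ell_\delta:=\tfrac1{2a}\log(\distsub_0/\delta)$, and
\[ \E^*\!\bigl[\O_{\tau_\delta}^{\,2a-1/2}\bigr] \;=\; \E^*\!\bigl[\,|\sin\hat\Theta_{\hat\tau_\delta}|^{\,1-4a}\,\bigr]. \]
By ergodicity $\int_0^\infty\sin^2\hat\Theta_s\,ds=\infty$, so $\hat\tau_\delta<\infty$ $\Prob^*$-a.s. (confirming $\Prob^*\{\tau_\delta<\infty\}=1$ above) and $\hat\tau_\delta\to\infty$ as $\delta\to0$.

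\emph{The limit.} I would now reparametrize by the clock $A$: the time change $A$ is a.s. a homeomorphism (since $\sin^2>0$ off the unvisited set $\{0,\pi\}$), $\check\Theta_u:=\hat\Theta_{A^{-1}(u)}$ is again a positive recurrent exponentially ergodic diffusion, its invariant law is the $\sin^2$-size-biasing $\check\mu(d\theta)=\sin^2\theta\,\mu(d\theta)/\!\int\sin^2\,d\mu$, and $\hat\Theta_{\hat\tau_\delta}=\check\Theta_{\ell_\delta}$ with $\ell_\delta\to\infty$. Thus $\check\Theta_{\ell_\delta}$ converges in distribution to $\check\mu$ regardless of the starting angle $\arg z$, and since $\check\mu(d\theta)\asymp\theta^{4a}\,d\theta$ near $0$ the function $|\sin\theta|^{1-4a}$ is $\check\mu$-integrable and, via the exponential mixing bounds, the family $\{|\sin\hat\Theta_{\hat\tau_\delta}|^{1-4a}\}_{\delta}$ is uniformly integrable; hence
\[ \E^*\!\bigl[\O_{\tau_\delta}^{\,2a-1/2}\bigr] \;\longrightarrow\; \int_0^\pi |\sin\theta|^{1-4a}\,\check\mu(d\theta) \;=\; \frac{\int_0^\pi|\sin\theta|^{3-4a}\,\mu(d\theta)}{\int_0^\pi\sin^2\theta\,\mu(d\theta)} \;=:\; c_* \in(0,\infty), \]
a constant depending only on $\kappa=2/a$. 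Combined with the identity of the first paragraph this gives $\Prob\{\distsub_\infty(z)\le\delta\}\sim c_*\,G(z)\,\delta^{2-d}$.

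\emph{Where the difficulty lies.} The delicate point is the last step: $\hat\tau_\delta$ is a path-dependent stopping time, so one must rule out that stopping there biases the limiting angle in a way that reintroduces dependence on $z$, and one must control the blow-up of $|\sin\theta|^{1-4a}$ at the endpoints. Both are handled by the reduction to hitting a deterministic level $\ell_\delta$ of the clock $A$ (turning the problem into a genuine large-time limit for $\check\Theta$), together with the explicit speed measure of the tilted diffusion and quantitative exponential-ergodicity estimates; the remaining ingredients (the change-of-measure identity, boundedness of $M$ on $[0,\tau_\delta]$, and the Girsanov computation of the angle SDE) are routine.
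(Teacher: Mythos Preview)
Your argument is correct and follows the paper's strategy: optional stopping of $M_{t\wedge\tau_\delta}$ gives the exact identity $\Prob\{\tau_\delta<\infty\}=G(z)\,\delta^{2-d}\,\E^*[\sin^{1-4a}\Theta_{\tau_\delta}]$, and the constant comes from ergodicity of the tilted angle diffusion. The one difference worth pointing out is your choice of clock. You use the change $\partial_t\sigma=X_\sigma^2+Y_\sigma^2$ from Theorem~\ref{theoremphase2} (making $\hat Y_t=e^{-at}$), under which $\hat\distsub_t$ is still random and $\tau_\delta$ becomes the path-dependent time $\hat\tau_\delta$; you then need a \emph{second} reparametrization by $A_t=\int_0^t\sin^2\hat\Theta_s\,ds$ to reach a deterministic level. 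The paper (via Exercise~\ref{jun4.ex1}) instead takes $\partial_t\sigma=(X_\sigma^2+Y_\sigma^2)^2/Y_\sigma^2$, which makes $\hat\distsub_t=e^{-2at}$ directly, so $\tau_\delta$ corresponds to the \emph{deterministic} time $\tfrac{1}{2a}\log(\distsub_0/\delta)$ and the tilted angle satisfies $d\hat\Theta_t=2a\cot\hat\Theta_t\,dt+d\hat W_t$ with invariant density $\propto\sin^{4a}\theta$. Your two time changes compose to give exactly this one --- your $\check\Theta$ obeys the same SDE --- so the ``delicate point'' you flag (the path-dependence of $\hat\tau_\delta$) is self-inflicted and evaporates with the better clock; after that the ergodic/UI step and the value $c_*=2\big/\!\int_0^\pi\sin^{4a}\theta\,d\theta$ coincide.
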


\begin{proof}
By scaling it suffices to prove this result when $z=x+i$.  We fix
$x$ and we consider the martingale from Example \ref{may4.example1},
\[                 M_t = \distsub_t^{d-2} \, \sin^{4a-1}
  \Theta_t  
                  = |g_t'(z)|^{2-d} \, G(Z_t), \]
which satisfies
\[                   dM_t = \frac{(1-4a) \, X_t}{X_t^2 + Y_t^2}
  \, M_t \, dB_t. \]
One can check that with probability one $M_t \rightarrow 0$.
Let $\tau _\delta = \inf\{t: \distsub_t \leq \delta \}$. For
fixed $\delta,$, $M_{t \wedge \tau_\delta}$ is a 
uniformly bounded martingale, and hence the
optional sampling theorem gives
\[   G(z) =   M_0 = \lim_{t \rightarrow \infty}
  \E[M_{\tau_\delta \wedge t}] =  \E[M_{\tau_\delta};
\tau_\delta < \infty ]  = \delta^{d-2}
  \, \E[\sin^{ 4a - 1} \Theta_{\tau_\delta}; \tau_\delta < \infty].  \]
With the aid of Girsanov (details of this argument are left
as Exercise \ref{jun4.ex1}), we can show that there exists
$0 < c_* < \infty$ such that
\[      \E[\sin^{  4a - 1} \Theta_{\tau_\delta}; \tau_\delta < \infty]
   \sim [1/c_*] \, \Prob\{\tau_\delta < \infty\}. \]
\end{proof}

\begin{remark}
In fact it can be shown (but is more difficult) that the Hausdorff
dimension of the path $\gamma(0,\infty)$ is $d = 1 + \frac \kappa
8$ if $\kappa \leq 8$.
\end{remark}

\section{Cardy's formula}

In this section, we derive a formula for $SLE$ that corresponds
to Cardy's formula for percolation as discussed in Lecture 1,
Section \ref{percsec}.  Suppose $\gamma(0,\infty)$ is an $SLE_\kappa$
curve with $\kappa > 4$.  Suppose $x,y > 0$.  Then with probability
one $T_x, T_{-y} < \infty$. We will consider
\[        \phi(x,y) = \Prob\{T_{-y} > T_x\}. \]
Note that scaling implies that $\phi(x,y) = \phi(y/x)$ where
$\phi(y) = \phi(1,y)$.

\begin{proposition} [Cardy's formula] If $0 < a < 1/2$, then
\begin{equation}
\label{may9.1}
    \phi(y) = \frac{\Gamma(2-4a)}{\Gamma(1-2a)^2}
  \int_0^{\frac{y}{y+1}} \frac{du}{u^{2a} \, (1-u)^{2a}}. 
\end{equation}
\end{proposition}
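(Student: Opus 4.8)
The plan is to derive an ordinary differential equation for $\phi$ by producing a martingale from the $SLE$ flow applied to the two boundary points $x>0$ and $-y<0$, and then to solve that ODE with the appropriate boundary conditions. First I would fix $x,y>0$ and track $Z_t(x) = g_t(x) - U_t =: X_t^+$ and $Z_t(-y) = g_t(-y) - U_t =: X_t^-$; both are real-valued and each satisfies the Bessel-type SDE $dX_t^{\pm} = \frac{a}{X_t^{\pm}}\,dt + dB_t$ with the \emph{same} driving Brownian motion, with $X_0^+ = x > 0 > -y = X_0^-$. Since $\kappa = 2/a > 4$ we have $a < 1/2$, so by Proposition \ref{realprop} (the Bessel analysis) each of $T_x,T_{-y}$ is a.s.\ finite, and the ordering $X_t^- < 0 < X_t^+$ persists until the first of the two swallowing times. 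The natural object is the ratio-type process
\[
    W_t = \frac{X_t^+}{X_t^+ - X_t^-} \in (0,1),
\]
which at $t=0$ equals $x/(x+y)$ and which, on the event $T_{-y} > T_x$, tends to $0$ as $t \uparrow T_x$ (the point $x$ is absorbed, $X_t^+ \to 0$, while $X_t^- $ stays bounded away from $0$), and tends to $1$ on the complementary event. So $\Prob\{T_{-y} > T_x \mid \mathcal F_t\}$ should be expressible as $F(W_t)$ for a suitable function $F:[0,1]\to[0,1]$ with $F(0)=0$, $F(1)=1$.

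Next I would compute $dW_t$ by It\^o's formula. Writing $D_t = X_t^+ - X_t^-$, one gets $dD_t = a\big(\tfrac1{X_t^+} - \tfrac1{X_t^-}\big)\,dt$ (the Brownian parts cancel), so $D_t$ has no martingale part, and then a direct but routine computation gives an SDE of the form $dW_t = \mu(W_t)\,\alpha_t\,dt + \sigma(W_t)\,\beta_t\,dB_t$ after the natural time change that removes the common factor $1/D_t^2$ — concretely, reparametrizing by $d\hat t = D_t^{-2}\,dt$ turns $W$ into a one-dimensional diffusion on $(0,1)$ whose generator, after simplification using $a$, has drift $\propto (1-2a)(1-2W)$ (up to the precise normalization) and diffusion coefficient $\propto W^2(1-W)^2$ or equivalently, in a better coordinate, a Bessel-like form near each endpoint. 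Requiring that $F(W_t)$ be a (local, then genuine, by boundedness) martingale forces $F$ to be harmonic for this generator: an ODE of hypergeometric type,
\[
    W(1-W)\,F''(W) + (1-2a)(1-2W)\,F'(W) = 0.
\]
Solving, $F'(W) = c\,[W(1-W)]^{-2a}$, hence $F(W) = c\int_0^W u^{-2a}(1-u)^{-2a}\,du$; imposing $F(1)=1$ fixes $c = \Gamma(2-4a)/\Gamma(1-2a)^2$ via the Beta integral $\int_0^1 u^{-2a}(1-u)^{-2a}\,du = B(1-2a,1-2a) = \Gamma(1-2a)^2/\Gamma(2-4a)$ (which converges precisely because $a<1/2$). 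Evaluating at the initial value $W_0 = x/(x+y) = \frac{y/x}{y/x+1}$ with $y$ replaced by the scaling variable $y/x$ yields \eqref{may9.1}.

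The step I expect to be the main obstacle is the rigorous martingale/optional-stopping argument at the boundary: one must justify that $F(W_{t\wedge\tau})\to \mathbf 1\{T_{-y}>T_x\}$ and that $\E[F(W_{t\wedge\tau})]$ passes to the limit, i.e.\ that $W_t$ genuinely converges to $0$ or $1$ and does not oscillate, and that the time change $d\hat t = D_t^{-2}\,dt$ sends $T_x \wedge T_{-y}$ to $+\infty$ so the stationary/boundary behavior of the limiting diffusion applies. This requires the Bessel comparison near each endpoint (from Proposition \ref{realprop} and the remark identifying the Bessel dimension $d = 2a+1 < 2$) to control that the absorbed coordinate actually hits $0$ while the other stays positive, together with boundedness of $F$ on $[0,1]$ to upgrade the local martingale to a true martingale and apply optional sampling. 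The ODE solution itself and the Beta-integral normalization are routine once the generator is in hand.
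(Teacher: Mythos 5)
Your plan is essentially the paper's own proof: the paper likewise forms the ratio process $Z_t = X_t/(X_t-J_t)$ with $X_t = g_t(1)-U_t$, $J_t = g_t(-y)-U_t$, time-changes to the autonomous diffusion $d\tilde Z_t = a\bigl[\tilde Z_t^{-1} - (1-\tilde Z_t)^{-1}\bigr]\,dt + d\tilde B_t$, and solves the resulting hypergeometric ODE with the Beta-integral normalization $B(1-2a,1-2a) = \Gamma(1-2a)^2/\Gamma(2-4a)$. Two bookkeeping slips are worth flagging, and they happen to cancel. First, the drift of the time-changed diffusion gives the equation $W(1-W)F''(W) + 2a(1-2W)F'(W)=0$, not $(1-2a)(1-2W)$ as you wrote; the solution $F'(W) = c\,[W(1-W)]^{-2a}$ that you then use solves the correct equation, not your displayed one. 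Second, since $W_t \to 0$ on the event $\{T_{-y}>T_x\}$, the harmonic function representing $\Prob\{T_{-y}>T_x \mid \mathcal F_t\}$ must satisfy $F(0)=1$, $F(1)=0$ and be evaluated at $W_0 = x/(x+y)$, which for $x=1$ is $1/(1+y)$, not $y/(1+y)$ (your identity $x/(x+y) = \frac{y/x}{y/x+1}$ is false). Your flipped boundary conditions and flipped evaluation point compensate exactly, by the symmetry of the integrand under $u\mapsto 1-u$, so the final formula \eqref{may9.1} comes out right; the technical issues you isolate at the end (convergence of $W_t$ to $\{0,1\}$, optional stopping for the bounded martingale) are indeed the only points the paper's sketch also leaves implicit.
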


\begin{proof}  Let $X_t = g_t(1)-U_t, J_t = g_t(-y)-U_t$ and
\[               Z_t = \frac{X_t}{X_t - J_t} . \]
Let $\tau = \inf\{t: Z_t \in \{0,1\}\}$ and
\[    \psi(r) = \Prob\{Z_\tau = 0 \mid Z_0 = r\} . \]
Then $\phi(y) = \psi(\frac 1{y+1}).$  Since
\[   dX_t = \frac{a}{X_t} \, dt + dB_t, \;\;\;\;
            \p_t[X_t - J_t] = \frac{a}{X_t} - \frac a{J_t} , \]
we can see that
\[    dZ_t = \frac{a}{(X_t - J_t)^2} \, \left[\frac1{Z_t} +
  \frac 1{Z_t - 1} \right] \, dt + \frac{1}{X_t - J_t} \, dB_t. \]
We can do a random time change $\sigma$ and see that $\tilde Z_t
:= Z_{\sigma(t)}$ satisfies
\[               d \tilde Z_t = \left[\frac{a}{\tilde Z_t} -
  \frac{a}{1 - \tilde Z_t} \right] \, dt + d \tilde B_t, \]
where $\tilde B_t$ is a standard Brownian motion.  However,
$\psi (\tilde Z_t)$ is a martingale. Using It\^o's formula, we
can see that this implies
\[              \psi''(u) + 2a \, \left[\frac 1 u - \frac{1}{
  1-u} \right] \, \psi'(u) = 0 . \]
Solving this equation with the boundary conditions $\psi(0) = 1,
\psi(1) = 0$ gives \eqref{may9.1}.

\end{proof}

To understand why this corresponds to Cardy's crossing formula
for percolation, consider the percolation exploration process as
in Lecture 1, Section \ref{percsec}.  If we again put the
boundary condition of  black on the negative real axis and white
on the positive real axis then the event $\{T_{-y} > T_1\}$ corresponds
to the event that in the (scaled) percolation cluster there
is a connected component of black sites connecting $[-y,0]$
to $[1,\infty)$.  Hence the ``crossing probability'' for $SLE_\kappa$
is given by
\[ 
 P_\Half([-y,0],[1,\infty))  =
    \phi(y) = \frac{\Gamma(2-4a)}{\Gamma(1-2a)^2}
  \int_0^{\frac{y}{y+1}} \frac{du}{u^{2a} \, (1-u)^{2a}}.  \]
In the case
 $\kappa = 6$, which we will see in the next lecture corresponds
to percolation, this crossing probability   becomes
\[ P_\Half([-y,0],[1,\infty))   = \frac{\Gamma(2/3)}{\Gamma(1/3)^2}
  \int_0^{\frac{y}{y+1}} \frac{du}{u^{\frac 23} \, (1-u)^{\frac 23}}. \]
This may not appear like a very nice formula, but if we map
$\Half$ to an equilateral triangle, we get the simple formula
in Figure 9.

\section{Conformal images of $SLE$}

If $D$ is a simply connected domain and $z,w$ are distinct
points in $\p D$, then the measure $\mu_D^\#(z,w)$ is defined
to be the image of $\mu_\Half^\#(0,\infty)$ under a conformal
transformation $f: \Half \rightarrow D$ with $f(0) = z,
f(\infty) = w$.  Here, and for the rest of
these lectures,  we are considering the probability
measures $\mu^\#_\Half$ as being defined on curves modulo
reparametrization.  The map $f$ is not unique; however, any
other such map $f_1$ can be written as $f_1(z) = f(rz)$
for some $r > 0$.  The invariance of $SLE_\kappa$ under scaling
shows that 
$\mu_D^\#(z,w)$ is well defined.

\begin{figure}[htb]
\begin{center}
\epsfig{file=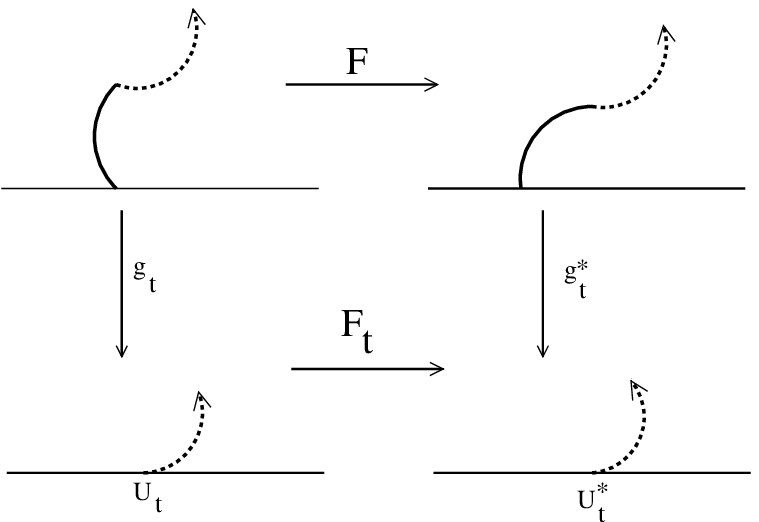}
\caption{The maps $g_t^*,F_t$. Note that
$F_t = g_t^* \circ F \circ g_t^{-1}.$}
\end{center}
\end{figure}

Here we will assume that 
\[  F(z) = \sum_{j=0}^\infty a_j \, z^j , \;\;\;\;
 a_j \in \R, \;\;\; a_1 > 0 , \]
is analytic in a neighborhood of the origin.  The assumptions
on the $a_j$ imply that near $0$ $F$ maps $\R$ into $\R$ and
for some $\epsilon > 0$,
$\epsilon \Disk_+ = \{z \in \Half: |z| < \epsilon\}$ is mapped
conformally into $\Half$. 

Suppose $\gamma$ is an $SLE_\kappa$ curve and let $\tau =
\inf\{t: \gamma(t) \not \in \epsilon \overline{\Disk_+}\}.$
For $t < \tau$, we can define the curve
\[         \gamma^*(t) =  F (\gamma(t)) . \]
We let $H_t^*$ be the unbounded component of $\Half \setminus
\gamma^*(t)$ and $g_t^*$ the unique conformal transformation of
$H_t^*$ onto $\Half$ with
\[             g_t^*(z) = z + \frac{a^*(t)}{z} + \cdots . \]
We also let $F_t = g_t^* \circ F \circ g_t^{-1}$; in other words,
$  F_t \circ g_t = g_t^* \circ F.$  Using only properties of
the Loewner equation, we can see that
\begin{equation}  \label{may26.6}
          \p_t a^*(t) = a \, F_t'(U_t)^2 . 
\end{equation}
The Loewner equation tells us that
\[    \p_t   g_t^*(z)  = \frac{a \, F_t'(U_t)^2}
                       {g_t^*(z) - U_t^*} , \]
where $U_t^* = g_t^*(\gamma^*(t)) =  F_t(U_t) $.

\begin{proposition} \label{prop1}
Under a suitable time change, $\hat U_t
 = U_{\sigma(t)}^*$ satisfies
\[              d \hat U_t = b \, \frac{\Phi_t''(\hat 
 U_t)}{\Phi_t'(
  \hat U_t)} \, dt + dW_t , \]
where $W_t$ is a standard Brownian motion, 
\[              b = \frac{3a-1}{2} = \frac{6 - \kappa}{2\kappa}  \]
is the boundary scaling exponent,  and $\Phi_t =
F_{\sigma(t)}^{-1}$. 
\end{proposition}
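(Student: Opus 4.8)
The plan is to differentiate the intertwining relation $F_t\circ g_t = g_t^*\circ F$ in time, extract from it an It\^o SDE for $U_t^*=F_t(U_t)$, and then rescale time so that the martingale part becomes standard Brownian motion. First, fixing $\zeta$ and differentiating $F_t(g_t(z))=g_t^*(F(z))$ in $t$ (writing $z=g_t^{-1}(\zeta)$, and using $\partial_t g_t(z)=a/(g_t(z)-U_t)$, the chain-rule identity $F_t'(\zeta)=g_t^{*\prime}(F(z))F'(z)/g_t'(z)$, and $\partial_t g_t^*(w)=aF_t'(U_t)^2/(g_t^*(w)-U_t^*)$ coming from \eqref{may26.6}), one obtains
\[
   \partial_t F_t(\zeta)=\frac{a\,F_t'(U_t)^2}{F_t(\zeta)-F_t(U_t)}-\frac{a\,F_t'(\zeta)}{\zeta-U_t},
\]
which is precisely $a\,(\Lambda\tilde F_t)(\zeta-U_t)$ for the operator $\Lambda$ from the section on the flow on conformal maps, where $\tilde F_t(w):=F_t(U_t+w)$.

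The quantity I actually need is the value of this at the removable singularity $\zeta=U_t$. Expanding $\tilde F_t(w)=F_t'(U_t)\,w+\frac12 F_t''(U_t)\,w^2+\cdots$ and cancelling the two $1/w$ tails (equivalently, reading off the constant term of $\Lambda\tilde F_t$ from the earlier lemma) gives
\[
   \partial_t F_t(U_t)=-\frac{3a}{2}\,F_t''(U_t).
\]
Now apply It\^o's formula to $U_t^*=F_t(U_t)$, treating $F_t$ as a function that is analytic in space and $C^1$ in time and is evaluated along the semimartingale $U_t=-B_t$ (so $d\langle U\rangle_t=dt$); using the three quantities $\partial_t F_t(U_t)$, $F_t'(U_t)$, $F_t''(U_t)$ one gets
\[
   dU_t^*=\Bigl(\partial_t F_t(U_t)+\tfrac12 F_t''(U_t)\Bigr)\,dt+F_t'(U_t)\,dU_t
   =\frac{1-3a}{2}\,F_t''(U_t)\,dt+F_t'(U_t)\,dU_t .
\]

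Next perform the time change: let $\sigma$ be the inverse of $s\mapsto\int_0^s F_r'(U_r)^2\,dr$, so $\dot\sigma(t)=F_{\sigma(t)}'(U_{\sigma(t)})^{-2}$, and set $\hat U_t=U_{\sigma(t)}^*$. Time-changing the continuous martingale part $M_s=\int_0^s F_r'(U_r)\,dU_r$ (whose quadratic variation is exactly the integral being inverted) produces a standard Brownian motion $W_t$, while the drift acquires the factor $\dot\sigma$, so
\[
   d\hat U_t=\frac{1-3a}{2}\,\frac{F_{\sigma(t)}''(U_{\sigma(t)})}{F_{\sigma(t)}'(U_{\sigma(t)})^2}\,dt+dW_t .
\]
Finally rewrite the drift through $\Phi_t=F_{\sigma(t)}^{-1}$: since $\hat U_t=F_{\sigma(t)}(U_{\sigma(t)})$ we have $\Phi_t(\hat U_t)=U_{\sigma(t)}$, $\Phi_t'(\hat U_t)=1/F_{\sigma(t)}'(U_{\sigma(t)})$ and $\Phi_t''(\hat U_t)=-F_{\sigma(t)}''(U_{\sigma(t)})/F_{\sigma(t)}'(U_{\sigma(t)})^3$, hence $\Phi_t''(\hat U_t)/\Phi_t'(\hat U_t)=-F_{\sigma(t)}''(U_{\sigma(t)})/F_{\sigma(t)}'(U_{\sigma(t)})^2$. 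Thus the drift equals $\frac{3a-1}{2}\,\Phi_t''(\hat U_t)/\Phi_t'(\hat U_t)=b\,\Phi_t''(\hat U_t)/\Phi_t'(\hat U_t)$ with $b=\frac{3a-1}{2}=\frac{6-\kappa}{2\kappa}$ (using $a=2/\kappa$), as claimed.

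The delicate points are, first, justifying the differentiation of $F_t\circ g_t=g_t^*\circ F$ and the joint regularity of $(t,\zeta)\mapsto F_t(\zeta)$ needed to invoke the time-dependent It\^o formula --- everything here is local, valid only for $t$ before $\gamma$ exits $\epsilon\overline{\Disk_+}$, where $F_t$ is analytic near $U_t$ with $F_t'(U_t)>0$; and second, correctly extracting the constant $-\frac{3a}{2}F_t''(U_t)$ from the removable singularity of $\Lambda\tilde F_t$ at $0$. That single coefficient, together with the $\frac12 F_t''(U_t)$ produced by the It\^o correction, is exactly what yields the exponent $b$, so the bookkeeping between power-series and Taylor coefficients must be done carefully; by contrast the time change itself is routine given the Dambis-Dubins-Schwarz representation of the martingale part.
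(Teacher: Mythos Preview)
Your proof is correct and follows essentially the same approach as the paper: apply It\^o's formula to $U_t^*=F_t(U_t)$, use the Loewner-flow computation (the constant term of $\Lambda\tilde F_t$) to identify $\dot F_t(U_t)=-\tfrac{3a}{2}F_t''(U_t)$, time-change so the martingale part becomes standard Brownian motion, and rewrite the drift via the inverse $\Phi_t=F_{\sigma(t)}^{-1}$. You have simply been more explicit than the paper about deriving $\partial_t F_t(\zeta)$, specifying the time change $\sigma$, and carrying out the inverse-function derivative calculation.
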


\begin{proof}
We use It\^o's formula to get
\[  dU_t^* = dF_t(U_t) = \left[\dot F_t(U_t) + \frac 12
   F''_t(U_t)\right] \, dt + F'(U_t) \, dU_t . \]
The term $\dot F_t(U_t)$ can be calculated using the Loewner 
equation. In fact, it is the first term in \eqref{oct17.5},
i.e., $\dot F_t(U_t) = -(3a/2) \, F''_t(U_t)$ (the factor $a$
appears because \eqref{oct17.5} assumes $a=1$).  Therefore,
\[         d   U_t^* = - b \, {F''_t(U_t)} \, dt
             + F'_t(U_t) \, d U_t. \]
With an appropriate time change we can write this as
\[           d \hat U_t = - b \, \frac{F_{\sigma(t)}''( U_{\sigma(t)})}{F_{\sigma(t)}'( 
    U_{\sigma(t)})^2} \, dt + d W_t. \]
A simple calculation shows that
\[ - \frac{F_{\sigma(t)}''(U_{\sigma(t)}
 )}{F_{\sigma(t)}'(U_{\sigma(t)})^2} = \frac{\Phi_t''(\hat 
 U_t)}{\Phi_t'(
  \hat U_t)}. \]

\end{proof}

\begin{remark} The half-plane capacity is not preserved by $F$.
The time change in the proof is exactly the time change needed so
that $\hcap(\gamma^*(0,\sigma(t)]) = at$.  
\end{remark}

\section{Exercises for Lecture 3}

\begin{exercise}  \label{ex.phase}  Show why
Proposition \ref{realprop} implies Theorem \ref{simpletheorem}
\end{exercise}

\begin{exercise}  \label{ex.straight}  Verify the step
in the proof of Theorem \ref{theoremphase2}.
\end{exercise}

\begin{exercise}  Justify \eqref{may26.6}.
\end{exercise}

\begin{exercise}  \label{jun4.ex1}
The goal is to fill in the details to Theorem 
\ref{dimensiontheorem}.   Let $M_t,\tau_\delta$ be as
in the paragraph following the theorem and assume
$z = x+i, \theta_0 = \arg(z)$.
\begin{itemize}
\item  Show that if we weight by the local martingale
$M_t$ then
\[     d \Theta_t = \frac{2a\, X_t \, Y_t}
   {(X_t^2 + Y_t^2)^2} \, dt - \frac{Y_t}{X_t^2 + Y_t^2} \,
  dW_t , \]
where $W_t$ is a standard Brownian motion in the weighted measure.
\item Show that if we do a random time change $\sigma(t)$
so that $\hat \distsub_t := \distsub_{\sigma(t)} = e^{-2at}$, 
then $\hat \Theta_t = \Theta_{\sigma(t)}$ satisfies
\begin{equation}  \label{jun4.1}
      d \hat \Theta_t = 2a \, \cot \hat \Theta_t
  \, dt + d\hat W_t, 
\end{equation}
where $\hat W_t$ is a standard Brownian motion.
\item  Let
   \[ e(\theta,t) = \E[\sin^{4a-1} \, \hat \Theta_t \mid
  \hat \Theta_0 = \theta], \]
where $\hat \Theta_t$ satisfies  \eqref{jun4.1}.  Use the
Girsanov theorem to prove that
\[      \Prob\{\distsub_\infty \leq e^{-2at} \}
   =  e(t,\theta_0) \, G(z) \, e^{-2at(2-d)}. \]
\item  Find a stationary density for \eqref{jun4.1} and use
it to show that 
\[          \lim_{t \rightarrow \infty} e(t,\theta_0) = c_* \]
for some $c_* \in (0,\infty)$.   Determine $c_*$. 
\end{itemize}
\end{exercise}

\lecture{$SLE_\kappa$ in a simply connected domain $D$}

Let $\domains$ denote the set of simply connected subdomains $D$
of $\Half$ with $\Half \setminus D$ bounded and $\dist(0,\Half
\setminus D) > 0$.
We will write $\Phi_D$ for the unique conformal transformation of
$D$ onto $\Half$ with $\Phi_D(z) = z + o(1)$ as $z \rightarrow
\infty$ (we denoted this by $g_D$ earlier, but it will be convenient
to use a new notation).  We will consider $SLE_\kappa$ from $0$
to $\infty$ in $D$ and show that there are a number of equivalent
ways of defining this process.  We already have one definition: the
image of $SLE_\kappa$ in $\Half$ under the conformal transformation
$F_D := \Phi_D^{-1}$.  We note that (see Exercise \ref{exer.excursion})
 $\Phi_D'(0) \in (0,1]$; in fact $\Phi_D'(0)$
is the probability that a Brownian ``excursion'' in $\Half$ stays
in $D$.

\section{Drift and locality}

Proposition \ref{prop1} can be restated in the following way.
Suppose $\gamma$ is a curve and let $g_t$ be the corresponding
conformal maps.  Let
\[           \tau = \tau_D = \inf\{t: \dist(\gamma(t),
           \Half \setminus D) = 0 \} . \]

\begin{proposition}
Suppose 
 for $t < \tau$, $g_t$ is the solution to the Loewner equation
\eqref{chordalspec} where $U_t$ satisfies the stochastic
differential equation
\[           dU_t = b \, [\log \Phi'_t(U_t)]' \, dt - dB_t. \]
Here $\Phi_t = \Phi_{D_t}$ where $D_t = g_t(D)$. Then $g_t$
(and the corresponding curves $\gamma$) have the distribution
of $SLE_\kappa$ in $D$ stopped at time $\tau$.
\end{proposition}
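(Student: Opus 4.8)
The plan is to read this proposition off Proposition~\ref{prop1}, applied with $F=F_D=\Phi_D^{-1}$, once one identifies the conformal map written $\Phi_t$ there with $\Phi_{g_t(D)}$; the ``if\,$\ldots$\,then'' packaging then follows because the correspondence $\tilde\gamma\mapsto F_D\circ\tilde\gamma$ between $SLE_\kappa$ in $\Half$ and its image in $D$, together with the induced correspondence of driving functions, is invertible, so it transports laws in both directions. For the setup, let $\tilde\gamma$ be an $SLE_\kappa$ curve in $\Half$ from $0$ to $\infty$, with Loewner maps $\tilde g_s$ in the parametrization $\hcap(\tilde\gamma(0,s])=as$ and driving function $\tilde U_s=-\tilde B_s$ a standard Brownian motion. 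Since $\dist(0,\Half\setminus D)>0$, the point $0$ lies on $\partial D$ where the boundary is a line segment, so $\Phi_D$ extends analytically and real-valuedly past $0$; normalize $\Phi_D$ so that $\Phi_D(0)=0$, noting that only $\Phi_D'$ and $\Phi_D''$ enter below, so the precise additive or positive real multiplicative normalization of $\Phi_D$ is immaterial except in making the image curve start at $0$. Then $F_D(z)=\sum_j a_j z^j$ near $0$ with $a_j\in\R$ and $a_1=1/\Phi_D'(0)>0$, so $F_D$ plays the role of ``$F$'' in the section on conformal images of $SLE$; by definition $\gamma:=F_D\circ\tilde\gamma$ is $SLE_\kappa$ in $D$, and it is a curve because $F_D$ is conformal on $\Half$. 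Write $g_t$, $U_t=g_t(\gamma(t))$ for the Loewner maps and driving function of $\gamma$ in the parametrization $\hcap(\gamma(0,t])=at$, and let $\sigma$ be the time change from the remark following Proposition~\ref{prop1}, so that in the notation of that proposition and its proof $U_t$ is $\hat U_t$ and the map written $\Phi_t$ there is $F_{\sigma(t)}^{-1}=\tilde g_{\sigma(t)}\circ\Phi_D\circ g_t^{-1}$.

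The first substantive step identifies this $\Phi_t$. I claim $\tilde g_{\sigma(t)}\circ\Phi_D\circ g_t^{-1}$ coincides, up to an additive real constant, with $\Phi_{D_t}$, where $D_t=g_t\big(D\setminus\gamma(0,t]\big)$ is what the statement abbreviates $g_t(D)$; this constant is irrelevant since $[\log\Phi_t'(U_t)]'$ does not see it. Indeed $g_t^{-1}$ maps $D_t$ onto $D\setminus\gamma(0,t]$; then $\Phi_D$ maps that onto $\Half\setminus\tilde\gamma(0,\sigma(t)]$, using $\Phi_D\circ\gamma=\tilde\gamma$ and the definition of $\sigma$; then $\tilde g_{\sigma(t)}$ maps onto $\Half$. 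So the composite is a conformal bijection $D_t\to\Half$, and composing the expansions $g_t^{-1}(z)=z+O(z^{-1})$, $\Phi_D(w)=w+O(1)$, $\tilde g_{\sigma(t)}(\zeta)=\zeta+O(\zeta^{-1})$ shows it is $z+O(1)$ at infinity; since an automorphism of $\Half$ that is $z+O(1)$ at infinity is a real translation, the composite differs from $\Phi_{D_t}$ by a real constant, proving the claim. Proposition~\ref{prop1} therefore reads
\[ dU_t = b\,\frac{\Phi_t''(U_t)}{\Phi_t'(U_t)}\,dt + dW_t = b\,[\log\Phi_t'(U_t)]'\,dt + dW_t \]
with $\Phi_t=\Phi_{D_t}$ and $W_t$ a standard Brownian motion; writing $B_t:=-W_t$ gives the SDE in the statement, and shows that $SLE_\kappa$ in $D$ (namely $\gamma$) has a driving function solving it.

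For the converse, observe that the construction is reversible. Given any solution $U_t$ of the SDE with $U_0=0$ on $[0,\tau)$, let $g_t$ be its Loewner flow and $\gamma$ the generating curve (existence of $\gamma$ is inherited from the $\Half$ case, the driving-function correspondence being a measurable bijection), and set $\tilde\gamma:=\Phi_D\circ\gamma$, reparametrized by half-plane capacity, with Loewner maps $\tilde g_s$ and driving function $\tilde U_s$. The relation $\Phi_{D_t}=\tilde g_{\sigma(t)}\circ\Phi_D\circ g_t^{-1}$ holds for an arbitrary driving function, so, together with \eqref{may26.6} controlling the time change, It\^o's formula computes $d\tilde U_s$ exactly as in the proof of Proposition~\ref{prop1}, but run backwards: the drift $b\,[\log\Phi_t'(U_t)]'$ of $dU_t$ is precisely the contribution produced there by the Loewner bookkeeping $\dot F_t(U_t)=-\frac{3a}{2}F_t''(U_t)$, so it cancels and $\tilde U_s$ has zero drift; because the capacity time change preserves quadratic variation, $d\langle\tilde U\rangle_s=ds$, so by L\'evy's characterization $\tilde U_s$ is a standard Brownian motion. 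Hence $\tilde\gamma$ is $SLE_\kappa$ in $\Half$ and $\gamma=F_D\circ\tilde\gamma$ is $SLE_\kappa$ in $D$, stopped at $\tau=\tau_D$; in particular the SDE determines the law of $\gamma$.

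The point requiring care is not any single estimate but turning ``reversibility'' into an honest uniqueness-in-law statement: one must keep the two capacity parametrizations (of $\tilde\gamma$ and of $\gamma$) and the time change $\sigma$ between them straight, check that $\Phi_D$ really extends past $0$ under the hypotheses on $D$, and verify that the pathwise transformation $U\mapsto\tilde U$ is adapted with a well-defined time change, so that L\'evy's theorem legitimately applies. This route is preferable to an abstract weak-uniqueness argument for the SDE itself, whose drift $b\,(\log\Phi_t')'(U_t)$ depends on the whole past of $U$ through the moving domain $g_t(D)$.
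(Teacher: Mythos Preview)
Your proof is correct and follows exactly the approach the paper intends: the paper simply introduces this proposition by saying ``Proposition~\ref{prop1} can be restated in the following way'' and gives no further argument, so your reading-off of Proposition~\ref{prop1} with $F=F_D$ is precisely what is meant. You supply more detail than the paper does---the explicit identification of $F_{\sigma(t)}^{-1}$ with $\Phi_{D_t}$ (in fact they agree exactly, not just up to a constant, since $\Phi_D(z)=z+o(1)$ by definition), and the converse via L\'evy's characterization---but this is the natural fleshing-out of the paper's one-line justification.
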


In other words, the proposition gives an equivalent definition
of $SLE_\kappa$ in $D$, at least up to time $\tau_D$.
The drift is nontrivial unless $b = 0$ which holds if and only
if $a = 1/3, \kappa = 6$.

\begin{theorem} [Locality]  Suppose $\gamma$ is a curve with
the distribution of $SLE_6$ in the domain $D \in \domains$.  
The distribution of $\gamma$ up to time $\tau_D$ is the same
as that of $SLE_\kappa$ in $\Half$ up to time $\tau_D$.
\end{theorem}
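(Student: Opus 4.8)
The plan is to read the theorem off the proposition stated immediately above it, which is a restatement of Proposition \ref{prop1}, by specializing to the percolation value $\kappa = 6$. For $\kappa = 6$ one has $a = 2/\kappa = 1/3$, and therefore the boundary scaling exponent is
\[ b = \frac{3a-1}{2} = \frac{3\cdot\frac13 - 1}{2} = 0 . \]
Consequently the stochastic differential equation
\[ dU_t = b\,\bigl[\log\Phi_t'(U_t)\bigr]'\,dt - dB_t , \]
which by that proposition characterizes the driving function of $SLE_6$ in $D$ run up to the time $\tau_D$, loses its drift and becomes simply $dU_t = -dB_t$. In other words the driving function of $SLE_6$ in $D$, stopped at $\tau_D$, is (up to sign) a standard Brownian motion, which is precisely the definition of chordal $SLE_6$ in $\Half$. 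Since both curves are generated by solving the same Loewner equation \eqref{chordalspec} (with $a = 1/3$) driven by processes with the same law, the two laws on curves agree up to time $\tau_D$, which is the assertion.

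Before invoking that proposition I would check that its standing hypotheses hold in the present situation. Because $D \in \domains$, i.e. $\Half\setminus D$ is bounded with $\dist(0,\Half\setminus D)>0$, the boundary $\partial D$ coincides with a neighborhood of $0$ in $\R$, so by Schwarz reflection $\Phi_D$ — and hence $F_D = \Phi_D^{-1}$ — extends analytically across that segment of $\R$, with real power-series coefficients and positive derivative at $0$, exactly as Proposition \ref{prop1} requires. For $t < \tau_D$ the tip $\gamma(t)$ stays a positive distance from $\Half\setminus D$, so $D_t = g_t(D)$ retains the same property in a neighborhood of $U_t = g_t(\gamma(t))$; thus $\Phi_t = \Phi_{D_t}$ is analytic near $U_t$ and the (now vanishing) drift term $[\log\Phi_t'(U_t)]'$ is well defined on $[0,\tau_D)$, so the cited proposition applies exactly on the range of interest.

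I do not expect a genuine obstacle at this stage: all the analytic work — chiefly the identity $\dot F_t(U_t) = -(3a/2)F_t''(U_t)$, obtained from the Loewner equation together with the expansion \eqref{oct17.5} of $\Lambda F$ — has already been carried out in Proposition \ref{prop1}, and Locality is its $b=0$ instance. The only point demanding a little care is bookkeeping around the time change $\sigma$ appearing in Proposition \ref{prop1}: it is an increasing reparametrization of the curve, so since the statement concerns the distribution of $\gamma$ as a curve (consistent with the parametrization-free convention adopted from Lecture 3 onward), passing from ``$\hat U_t$ has the law of $-B_t$'' to ``$\gamma$ has the law of the $SLE_6$ curve in $\Half$'' up to $\tau_D$ is immediate. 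One should also observe that $\tau_D$ is itself a functional of the curve, hence literally the same stopping rule on both sides of the claimed identity in law.
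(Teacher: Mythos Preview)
Your proof is correct and follows exactly the paper's approach: the theorem is presented there as an immediate consequence of the preceding proposition, the point being precisely that for $\kappa=6$ one has $b=0$ so the drift vanishes. Your treatment is in fact more careful than the paper's, which simply records the observation ``the drift is nontrivial unless $b=0$'' and states the theorem without further argument; your checks on the analyticity of $\Phi_D$ near $0$, the time change, and $\tau_D$ being a curve-functional are all sound and appropriate.
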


If we consider our discrete models, we can see that the only one
for which we would definitely expect the locality property is
the percolation exploration process.  As we saw in the last lecture,
the crossing probability for $SLE_6$ is the same as that
predicted by Cardy for percolation.  

For other values of $\kappa$, we see that $SLE_\kappa$ in $D$
is obtained (at least for small time) by putting a drift in the Brownian
motion.  The Girsanov theorem tells us that Brownian motion with
drift is absolutely continuous with respect to Brownian motion
without drift, at least for small times.

\section{Girsanov}  \label{girsanovsec}

The Girsanov theorem is a very useful tool for studying the
(local) martingales arising in $SLE_\kappa$ so we will discuss
what we need here.  Suppose that $M_t$ is a nonnegative
continuous local martingale satisfying
\begin{equation}  \label{may4.5}
                dM_t = J_t \, M_t \, dB_t,\;\;\;\;\;
  M_0 = 1 .
\end{equation}
Although $M_t$ may not be a martingale, we can approximate $M_t$
by a uniformly bounded martingale.  Indeed, if $\tau_n = \inf\{t:
M_t \geq n\}$ and $M_{t,n} = M_{t \wedge \tau_n}$, then for fixed
$n$,  $M_{t,n}$
is a uniformly bounded martingale satisfying
\[                 dM_{t,n} = J_t \, 1\{\tau_n > t\} \, M_{t,n}
  \, dB_t, \]
and for fixed $t$, $M_{t,n} \rightarrow M_t$ as $n \rightarrow \infty$.
For any nonnegative martingale $N_t$ satisfying
\[               dN_t =  J_t \, N_t \, dB_t, \]
 there is a measure $Q$ defined
by
\[                 Q(V) = \E[1_V \, N_t] , \]
if $V$ if $\F_t$-measurable.  The Girsanov theorem states that under
the measure $Q$,
\[                  W_t = B_t - \int_0^t \, J_s \, ds , \]
is a standard Brownian motion.  In other words, $B_t$ satisfies
\[                     dB_t = J_t \, dt + dW_t , \]
where $W_t$ is a Brownian motion with respect to $Q$.  

The Girsanov theorem requires that $N_t$ be a martingale.  If we only
know that $M_t$ is a local martingale satisfying \eqref{may4.5}, then
we can still use the Girsanov theorem as long as we run the paths
up to a stopping time $\tau_n$.  In this case we get the equation
\[    dM_t = J_t \, M_t \, dB_t, \;\;\;\; t < \tau_n . \]
If we weight by the (local) martingale $M_t$, we can say that 
$B_t$ satisfies
\[              dB_t = J_t \, dt + dW_t , \;\;\;\; t < \tau_n . \]
One can often use this equation to determine whether or not
$M_t$ is actually a martingale.  Essentially what keeps the local
martingale from being a martingale is the fact that some mass ``goes
to infinity in finite time''.  One can check that this happens by
time $t$ if and only if
\[             \lim_{n \rightarrow \infty}
               \E[M_{t,n}; \tau_n < t] > 0 , \]
i.e., if and only if in the {\em weighted} measure, there is a positive
probability of explosion of $M_t$ in finite time.

\begin{remark}  Although we will not prove the Girsanov theorem
here, let us give a heuristic reason why it is true.  Imagine
that $\Delta B_t :=
B_{t + \Delta t} - B_t$ were equal to $\sqrt{\Delta t}$
with probability $1/2$ and $-\sqrt{\Delta t}$ with probability
$1/2$.  Then $M_{t+\Delta t}$ is about $(1 + J_t \sqrt {\Delta t})
M_t$ with probability $1/2$ and $(1 - J_t \sqrt {\Delta t})
M_t$ with probability $1/2$.  If we weight by $M_{t + \Delta t}$,
then in the weighted measure $\Delta B_t$ equals $\sqrt {\Delta t}$
with probability $(1 + J_t \sqrt {\Delta t})/2$ and equals
$-\sqrt {\Delta t}$ with probability $(1 - J_t \sqrt {\Delta t})/2$.
Under this measure
\[   \E[\Delta B_t] = J_t \, \Delta t  . \]
In other words, by weighting by the martingale we obtain a drift
of $J_t$.
  
\end{remark}

\section{The restriction martingale}  \label{restmartsec}

Let $D,\Phi,\Phi_t$ be as above.  We will consider
$\Phi_t'(U_t)^b$.  In the next proposition, we use
the fact that
\[   \dot \Phi_t'(U_t) = a \, \left[\frac{\Phi_t''(U_t)^2}
               {4 \, \Phi_t'(U_t)} - \frac{2\, \Phi_t'''(U_t)}
   3 \right]. \]
This is a property of the Loewner equation and can be seen
as the second term in \eqref{oct17.5}.  With this, the following
proposition is a straightforward It\^o's formula
calculation.   The {\em Schwarzian derivative} of a function
$f$, $Sf$, is defined by
\[              Sf(z) = \frac{f'''(z)}{f'(z)} - \frac{3 \, f''(z)^2}
   {2 \, f'(z)^2}, \]
and the {\em central charge} $\c$ is defined by
\[   \c = \frac{2b(3-4a)}{a} = \frac{(3\kappa - 8) \, (6-\kappa)}
   {2\kappa}. \]

\begin{proposition} \label{feb26.prop1}
If $U_t$ is $SLE_{2/a}$ and $D \in \domains$, 
 then for $t < \tau_D$, 
\[    d[\Phi_t'(U_t)^b]
  = \Phi_t'(U_t)^b \, \left[ \frac{a\c}{12} \, S\Phi_t(U_t) \, dt
         - b\,\frac{\Phi_t''(U_t)}{\Phi_t'(U_t)} \, dB_t
              \right]     . \]
If we let
\[    M_t = \exp\left\{-\c \int_0^t  \frac{aS\Phi_t(U_t)}{12} \, dt
  \right\} \, \Phi_t'(U_t)^b, \]
then $M_t$ is a local martingale satisfying
\[    dM_t = - b\,\frac{\Phi_t''(U_t)}{\Phi_t'(U_t)} \,M_t \,dB_t. \] 
In particular, if $\kappa = 8/3$,
\[        M_t =  \Phi_t'(U_t)^{5/8}, \]
is a martingale.
\end{proposition}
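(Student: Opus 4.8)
The plan is to obtain the It\^o differential of $\Phi_t'(U_t)^b$ by combining three ingredients: the driving equation $dU_t = -dB_t$, so that $d\langle U\rangle_t = dt$; the stated time-derivative $\dot\Phi_t'(U_t) = a\big[\Phi_t''(U_t)^2/(4\Phi_t'(U_t)) - 2\Phi_t'''(U_t)/3\big]$, which is the coefficient of the linear term in \eqref{oct17.5} and records how the normalized uniformizing map of $D_t = g_t(D)$ is dragged along by the Loewner flow; and the chain rule for $x \mapsto x^b$. First I would apply It\^o to the map $(t,w) \mapsto \Phi_t'(w)$ along $w = U_t$, which gives $d[\Phi_t'(U_t)] = \big[\dot\Phi_t'(U_t) + \frac12\Phi_t'''(U_t)\big]\,dt - \Phi_t''(U_t)\,dB_t$; substituting the formula for $\dot\Phi_t'(U_t)$ collapses the third-derivative terms into a single coefficient $(3-4a)/6$ on $\Phi_t'''(U_t)$. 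Then I would apply It\^o once more to $x^b$, using that $\Phi_t'(U_t)$ has quadratic-variation density $\Phi_t''(U_t)^2$, and factor $\Phi_t'(U_t)^b$ out.

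At that stage the drift bracket reads $\big(\frac{ab}{4} + \frac{b(b-1)}{2}\big)(\Phi_t''/\Phi_t')^2 + \frac{b(3-4a)}{6}(\Phi_t'''/\Phi_t')$ (everything evaluated at $U_t$), and the stochastic part is $-b\,(\Phi_t''(U_t)/\Phi_t'(U_t))\,dB_t$. The key algebraic step is to recognize the drift as $\frac{a\c}{12}\,S\Phi_t(U_t)$: since $\frac{a\c}{12} = \frac{b(3-4a)}{6}$, the $\Phi_t'''/\Phi_t'$ coefficients already match, and the $(\Phi_t''/\Phi_t')^2$ coefficients match precisely because $\frac{ab}{4} + \frac{b(b-1)}{2} = \frac{b(a + 2b - 2)}{4}$ equals $-\frac32\cdot\frac{b(3-4a)}{6}$ exactly when $2b = 3a - 1$, i.e., for the boundary scaling exponent $b = \frac{3a-1}{2}$. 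This gives the first display. For the local-martingale assertion I would set $A_t = \exp\{-\frac{a\c}{12}\int_0^t S\Phi_s(U_s)\,ds\}$, which is of bounded variation with $dA_t = -\frac{a\c}{12}S\Phi_t(U_t)\,A_t\,dt$, and apply the product rule to $M_t = A_t\,\Phi_t'(U_t)^b$: the two $dt$ contributions cancel and leave $dM_t = -b\,(\Phi_t''(U_t)/\Phi_t'(U_t))\,M_t\,dB_t$, so $M_t$ is a local martingale.

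For $\kappa = 8/3$ we have $a = 2/\kappa = 3/4$, hence $3 - 4a = 0$, so $\c = 0$, the compensator $A_t$ is identically $1$, and $b = \frac{3a-1}{2} = \frac58$; thus $M_t = \Phi_t'(U_t)^{5/8}$, already a local martingale by the previous paragraph. To upgrade it to a genuine martingale I would show $0 < M_t \le 1$. For $t < \tau_D$ the tip $\gamma(t)$ lies at positive distance from $\Half \setminus D$, so $U_t = g_t(\gamma(t))$ is a smooth boundary point of $D_t$ at which $\Phi_t'(U_t)$ is a well-defined positive real number; indeed, as in Exercise~\ref{exer.excursion}, $\Phi_t'(U_t) = \partial_y\phi_{D_t}(U_t)$ is the probability that a Brownian excursion in $\Half$ started from $U_t$ stays in $D_t$ for all time, so $\Phi_t'(U_t) \in (0,1]$. (Concretely, $\phi_{D_t}(z) = \Im z - \E^z[\Im B_{\tau_{D_t}}] \le \Im z$ on $D_t$, and expanding $\Phi_{D_t}$ at $U_t$ in the vertical direction, using $\Phi_{D_t}(U_t) \in \R$, yields $\Phi_t'(U_t) = \lim_{y \downarrow 0} \phi_{D_t}(U_t + iy)/y \le 1$.) Hence $M_t \in (0,1]$ is uniformly bounded, and a uniformly bounded local martingale is a martingale. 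I expect this last step to be the main obstacle: the It\^o computation is mechanical once the formula for $\dot\Phi_t'(U_t)$ is granted, but passing from ``local martingale'' to ``martingale'' genuinely needs the geometric input that $U_t$ stays a regular boundary point of $D_t$ for $t < \tau_D$, together with the a priori bound $\Phi_t'(U_t) \le 1$.
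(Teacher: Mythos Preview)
Your proof is correct and follows exactly the approach the paper indicates: the paper states the formula for $\dot\Phi_t'(U_t)$ and then simply says the proposition ``is a straightforward It\^o's formula calculation,'' and you have carried out that calculation in full, including the algebraic identification of the drift with $\tfrac{a\c}{12}\,S\Phi_t(U_t)$ via $b=\tfrac{3a-1}{2}$ and $\tfrac{a\c}{12}=\tfrac{b(3-4a)}{6}$. Your upgrade from local martingale to martingale at $\kappa=8/3$ using the bound $\Phi_t'(U_t)\in(0,1]$ (the excursion-probability interpretation from Exercise~\ref{exer.excursion}, noted at the start of Lecture~4) is also exactly what the paper relies on implicitly here and explicitly in the proof of the Restriction theorem that follows.
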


  Then under the weighted
measure $B_t$ satisfies
\[               dB_t =  - b\,\frac{\Phi_t''(U_t)}{\Phi_t'(U_t)} 
  \, dt + dW_t, \]
where $W_t$ is a Brownian motion in the new measure, i.e., $U_t$
satisfies
\[         dU_t = b 
 \,\frac{\Phi_t''(U_t)}{\Phi_t'(U_t)} 
  \, dt - dW_t, \]

\medskip

\noindent {\bf Fact.} {\em  $SLE_\kappa$ weighted by $\Phi_t'(U_t)^b$
gives $SLE_\kappa$ in the smaller domain. }

\medskip

This fact is valid for all $\kappa$ provided that $t$ is sufficiently
small.  For the rest of this lecture we will consider $\kappa \leq 4$
for which the curves are simple.  In Proposition \ref{feb27.prop2}
we will see that 
\[                  S\Phi_t(U_t)\leq 0 , \]
so $M_t \leq 1$ for $\kappa \leq 8/3$.  This implies that $M_t$ is
a martingale.  In fact, for all $\kappa \leq 4$, this is a 
martingale.
 
\medskip

\begin{theorem} [Restriction]  If $\kappa = 8/3$ and $D \in
\domains$, then
 \[  \Prob\{\gamma(0,\infty) \subset D\} = \Phi_D'(0)^{5/8}. \]
Moreover, 
\[          \frac{d\mu_D(0,\infty)}{d\mu_\Half(0,\infty)}
           = 1\{\gamma(0,\infty) \subset D\}. \]
\end{theorem}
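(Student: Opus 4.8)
The plan is to push the local martingale of Proposition~\ref{feb26.prop1} to infinite time and read off its terminal value. For $\kappa = 8/3$ the central charge is $\c = 0$, so the exponential correction disappears and
\[ M_t = \Phi_t'(U_t)^{5/8}, \qquad \Phi_t = \Phi_{D_t}, \quad D_t = g_t(D), \]
is itself a local martingale, defined for $t < \tau_D$. The first step is to see that $M_t$ is in fact a bounded \emph{true} martingale. For each $t < \tau_D$ (and $\kappa \le 4$, so $\gamma$ is a simple curve) the set $D_t$ is again in $\domains$ --- it is $\Half$ with the bounded hull $g_t(\Half\setminus D)$ removed --- and $U_t$ is a real boundary point of $D_t$ lying a positive distance from that hull; hence, by the Brownian-excursion interpretation of the hydrodynamically normalized map (Exercise~\ref{exer.excursion}) or directly from the expansion of $g_{D_t}$ at $\infty$, $0 < \Phi_t'(U_t) \le 1$, so $0 \le M_t \le 1$. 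A bounded local martingale is a uniformly integrable martingale, so $M_0 = \Phi_D'(0)^{5/8}$ and $M_t \to M_\infty$ almost surely and in $L^1$ (extending $M$ past $\tau_D$ by its a.s.\ limit there).

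The core of the proof is the identification $M_\infty = 1\{\gamma(0,\infty)\subset D\}$, i.e.\ $\Phi_t'(U_t) \to 1\{\gamma(0,\infty)\subset D\}$, the limit on $\{\tau_D < \infty\}$ being taken as $t\uparrow\tau_D$. On $\{\tau_D < \infty\}$ the tip $\gamma(t)$ approaches $\partial D$ as $t \uparrow \tau_D$; the domain $D_t$ then degenerates at $U_t$ (its boundary hull $g_t(\Half\setminus D)$ closes in on $U_t$), and the Koebe $(1/4)$-theorem / distortion theorem applied to $\Phi_t$ forces $\Phi_t'(U_t) \to 0$, so $M_\infty = 0$ there. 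On $\{\gamma(0,\infty)\subset D\}$ we have $\tau_D = \infty$, and I would argue $\Phi_t'(U_t) \to 1$ as follows: the $SLE$ hull $\Half\setminus H_t$ has half-plane capacity $at\to\infty$, hence unbounded diameter, whereas $g_t(\Half\setminus D)$ is the image of a \emph{fixed} bounded hull; since $\gamma(0,t]\subset D$ itself, it screens $U_t = g_t(\gamma(t))$ from $\Half\setminus D$, and one shows the ratio $\diam\, g_t(\Half\setminus D)\,/\,\dist(U_t, g_t(\Half\setminus D))$ tends to $0$. Then either the error estimate~\eqref{oct17.2} for hydrodynamic maps, or the Beurling estimate bounding $1-\Phi_t'(U_t)$ by the probability that a Brownian excursion from $U_t$ reaches that far-away hull, gives $\Phi_t'(U_t)\to 1$. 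This is the \textbf{main obstacle}: turning ``$\gamma(0,t]$ screens $\Half\setminus D$'' into the quantitative statement that the diameter-to-distance ratio vanishes requires a genuine harmonic-measure/Beurling argument, not just soft conformal-mapping facts.

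Granting $M_\infty = 1\{\gamma(0,\infty)\subset D\}$, optional sampling for the bounded martingale gives at once
\[ \Phi_D'(0)^{5/8} = M_0 = \E[M_\infty] = \Prob\{\gamma(0,\infty)\subset D\}, \]
which is the first assertion. For the Radon--Nikodym identity I would use the Fact stated before the theorem (the Girsanov computation of Section~\ref{girsanovsec} together with Proposition~\ref{feb26.prop1}): weighting $SLE_{8/3}$ in $\Half$ by $M_t/M_0$ produces $SLE_{8/3}$ in $D$, i.e.\ $\mu_D^\#(0,\infty)$. Since $M$ is uniformly integrable this change of measure passes to the limit, giving
\[ \frac{d\mu_D^\#(0,\infty)}{d\mu_\Half^\#(0,\infty)}(\gamma) = \frac{M_\infty}{M_0} = \frac{1\{\gamma(0,\infty)\subset D\}}{\Phi_D'(0)^{5/8}}. \]
Finally, conformal covariance with boundary exponent $b = 5/8$ applied to $\Phi_D : D \to \Half$ (using $|\Phi_D'(\infty)| = 1$) gives $C(D;0,\infty) = \Phi_D'(0)^{5/8}\,C(\Half;0,\infty)$; multiplying the two probability measures by these total masses cancels the factor $\Phi_D'(0)^{5/8}$ and yields $\dfrac{d\mu_D(0,\infty)}{d\mu_\Half(0,\infty)}(\gamma) = 1\{\gamma(0,\infty)\subset D\}$, as claimed.
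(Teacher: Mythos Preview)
Your argument is correct in outline and shares with the paper the use of the bounded martingale $M_t=\Phi_t'(U_t)^{5/8}$, the observation that $\Phi_t'(U_t)\to 1$ on $\{\gamma(0,\infty)\subset D\}$ (which the paper also simply asserts), and optional sampling. Your derivation of the Radon--Nikodym statement from Girsanov plus conformal covariance is actually more explicit than what the paper writes.

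The genuine methodological difference is in how you dispose of the event $\{\tau_D<\infty\}$. You argue geometrically that $g_t(\Half\setminus D)$ ``closes in on $U_t$'' and invoke Koebe to force $\Phi_t'(U_t)\to 0$. That is true, but turning it into a proof is not a one-liner: one must show that $\dist(\gamma(t),\Half\setminus D)\to 0$ really translates, through the moving map $g_t$, into $\dist(U_t,g_t(\Half\setminus D))\to 0$, and the derivative of $g_t$ near the tip is not controlled by soft distortion bounds. The paper avoids this entirely by a Girsanov trick: it stops at $\rho_\epsilon=\inf\{t:\dist(\gamma(t),\Half\setminus D)\le\epsilon\}$ and observes that
\[
\E[M_{\rho_\epsilon};\,\rho_\epsilon<\infty]=M_0\cdot Q\{\rho_\epsilon<\infty\},
\]
where $Q$ is the $M$-weighted measure. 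But under $Q$ the curve has the law of $SLE_{8/3}$ in $D$, i.e.\ the conformal image of $SLE_{8/3}$ in $\Half$, and since $\kappa\le 4$ this image stays in $D$ and goes to $\infty$; hence $Q\{\rho_\epsilon<\infty\}\to 0$. Thus the paper never needs to identify $M_\infty$ on $\{\tau_D<\infty\}$ at all. Your approach buys a direct pathwise identification of $M_\infty$ (which is conceptually satisfying and is what one needs anyway for the general $\kappa\le 4$ statement), while the paper's buys a shorter proof that leans on the qualitative fact ``$SLE_\kappa$ in $D$ stays in $D$ for $\kappa\le 4$,'' which was already established.
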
 

\begin{proof}  We first need to observe (we omit the
argument) that if $\gamma$ is a fixed
curve with $\gamma(t) \rightarrow \infty$ and $\gamma(0,\infty) \subset
D$, then $\Phi_t'(U_t) \rightarrow 1$.  For each $\epsilon > 0$,
let $\rho_\epsilon = \inf\{t: \dist(\gamma(t), \Half\setminus D)
 \leq \epsilon \}$.

 Since $M_{t,\epsilon}  = \Phi_{t \wedge \rho_\epsilon}
  (U_{t \wedge \rho_\epsilon})^{5/8}$
is a uniformly bounded martingale, 
\[   \Phi_D'(0)^{5/8} = \E[M_0] = \E[M_{\infty,\epsilon}]
     = \E[M_\infty; \rho_\epsilon = \infty] +
   \E[M_{\rho_\epsilon}; \rho_\epsilon < \infty] . \]
But we know that {\em if we weight by the martingale $M_t$},
the weighted paths have the distribution of $SLE_{8/3}$ in
$D$ which is the same as the conformal image of $SLE_{8/3}$
in $\Half$ under $\Phi_D^{-1}$.  Since $\kappa \leq 4,$ we
know that these paths stay in $D$ and hence
\[    \lim_{\epsilon \rightarrow 0+} 
 \E[M_{\rho_\epsilon}; \rho_\epsilon < \infty]  = 0 , \]
and
\[  \Phi_D'(0)^{5/8} = \lim_{\epsilon \rightarrow 0+}
     \E [M_\infty; \rho_\epsilon = \infty]  =
   \E[1\{\gamma(0,\infty) \subset D\}]  = \Prob\{\gamma(0,\infty)
  \subset D\}. \]

\end{proof}

This generalizes to all $\kappa \leq 4$.

\begin{theorem} [Restriction]  If $\kappa =2/a \leq 4$ and $D \in
\domains$, then
 \[  
          \frac{d\mu_D(0,\infty)}{d\mu_\Half(0,\infty)}
           = M_\infty = 1\{\gamma(0,\infty) \subset D\}
    \exp\left\{-\c \int_0^\infty  \frac{aS\Phi_t(U_t)}{12} \, dt
  \right\} . \]
\end{theorem}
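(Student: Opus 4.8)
The plan is to mimic the proof of the $\kappa = 8/3$ restriction theorem, now carrying along the exponential compensator. By Proposition~\ref{feb26.prop1} the process $M_t$ is a nonnegative local martingale with $dM_t = -b\,(\Phi_t''(U_t)/\Phi_t'(U_t))\,M_t\,dB_t$ and $M_0 = \Phi_D'(0)^b$; its $dB_t$-coefficient coincides with that of $\Phi_t'(U_t)^b$, so by the Girsanov discussion of Section~\ref{girsanovsec} and the \textbf{Fact} of Section~\ref{restmartsec}, weighting $SLE_\kappa$ in $\Half$ by $M_t$ produces $SLE_\kappa$ in $D$, run until $\tau_D = \inf\{t: \dist(\gamma(t),\Half\setminus D) = 0\}$. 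Since $\kappa \le 4$, the $SLE_\kappa$ curve in any domain of $\domains$ is simple (Theorem~\ref{simpletheorem}) and transient, so $SLE_\kappa$ in $D$ a.s.\ stays in $D$ with $\gamma(t)\to\infty$; we also use, exactly as in the $\kappa=8/3$ case, that $\Phi_t'(U_t)\to 1$ when $\gamma(0,\infty)\subset D$ and $\Phi_t'(U_t)\to 0$ as $t\uparrow\tau_D$ when $\tau_D<\infty$, and the inequality $S\Phi_t(U_t)\le 0$ of Proposition~\ref{feb27.prop2}.

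\emph{Step 1: $M_t$ is a genuine martingale.} For $\kappa\le 8/3$ we have $\c\le 0$, so $-\c\,a\,S\Phi_s(U_s)/12\le 0$ and the exponential factor is $\le 1$; since $0<\Phi_t'(U_t)\le 1$ (the Brownian-excursion interpretation of $\Phi_t'(U_t)$, cf.\ Exercise~\ref{exer.excursion}) and $b\ge 0$, this gives $M_t\le 1$, and a bounded local martingale is a martingale. For $8/3<\kappa\le 4$ the bound $M_t\le 1$ can fail, and one invokes the criterion of Section~\ref{girsanovsec}: localize at $\tau_n=\inf\{t:M_t\ge n\}$; under the measure obtained by weighting by $M_{t\wedge\tau_n}$ the driving process solves the $SLE_\kappa$-in-$D$ equation up to $\tau_n\wedge\tau_D$, so the curve stays in $D$, $\gamma(t)\to\infty$, $\Phi_t'(U_t)\to 1$, and $\int_0^\infty|S\Phi_s(U_s)|\,ds<\infty$, whence $M_t$ stays bounded along almost every weighted path, $\tau_n\to\infty$, and $\lim_n\E[M_{t\wedge\tau_n};\tau_n<t]=0$. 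Thus no mass escapes to infinity and $M_t$ is a martingale.

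\emph{Step 2: identifying $M_\infty$ and concluding.} A nonnegative martingale converges a.s., so $M_\infty=\lim_{t\to\infty}M_t$ exists, and since $\E[M_\infty]=M_0$ it is uniformly integrable. On $\{\gamma(0,\infty)\subset D\}$ (equivalently $\tau_D=\infty$; recall $\gamma(t)\to\infty$ a.s.) we have $\Phi_t'(U_t)\to 1$ and the Schwarzian integral converges, so $M_\infty=\exp\{-\c\int_0^\infty a\,S\Phi_t(U_t)/12\,dt\}$. On the complement, $\tau_D<\infty$ and $\Phi_t'(U_t)^b\to 0$ as $t\uparrow\tau_D$; when $\c\le 0$ the exponential is $\le 1$ so $M_\infty=0$, and when $\c>0$ the vanishing $M_\infty=0$ on this event follows from the Girsanov picture, under which the paths are $SLE_\kappa$ in $D$ and hence $\{\tau_D<\infty\}$ has zero mass, i.e.\ $\E[M_\infty;\tau_D<\infty]=0$. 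Hence $M_\infty=1\{\gamma(0,\infty)\subset D\}\exp\{-\c\int_0^\infty a\,S\Phi_t(U_t)/12\,dt\}$. Finally, weighting $\mu_\Half^\#(0,\infty)$ by $M_\infty/M_0$ yields the probability measure $\mu_D^\#(0,\infty)=\Phi_D^{-1}\circ\mu_\Half^\#(0,\infty)$; multiplying by total masses and using the conformal-covariance rule, which for $f=\Phi_D$ (normalized by $\Phi_D(z)=z+o(1)$ at $\infty$, so the $|f'(\infty)|$-factor is $1$) reads $C(D;0,\infty)=\Phi_D'(0)^b\,C(\Half;0,\infty)$, together with $M_0=\Phi_D'(0)^b$, gives $d\mu_D(0,\infty)=M_\infty\,d\mu_\Half(0,\infty)$. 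When $\c=0$ this is the $\kappa=8/3$ statement.

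\emph{Main obstacle.} The genuinely new point, absent when $\kappa=8/3$, is controlling the unbounded exponential compensator when $8/3<\kappa\le 4$: both the martingale property (Step~1) and the vanishing of $M_\infty$ off $\{\gamma\subset D\}$ (Step~2) rest on the fact that under the $M$-weighting the paths are $SLE_\kappa$ in $D$ and so never drive $\Phi_t'(U_t)$ or $\int^t|S\Phi_s(U_s)|\,ds$ to their singular values. Making this quantitative requires the a priori decay estimate $|S\Phi_t(U_t)|=O(t^{-2})$ — which one expects because $\Half\setminus D$ is bounded, $\hcap(\gamma(0,t])=at$, and $\gamma(t)\to\infty$, so the removed set recedes while $U_t$ sees it only at Euclidean scale $\gtrsim t^{1/2}$ — and this is the estimate I would need to check carefully.
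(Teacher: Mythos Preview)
Your proposal is correct and follows essentially the same approach as the paper. The paper's own proof is only a two-sentence sketch---bounded martingale for $\kappa\le 8/3$, Girsanov argument for $8/3<\kappa\le 4$---and you have filled in the details along exactly the lines the paper indicates, including the correct identification of the main technical obstacle (controlling the compensator when $\c>0$) that the paper leaves implicit.
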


  For $\kappa \leq 8/3$ $(\c \leq 0$), 
the martingale $M_t$ is
uniformly bounded and the proof proceeds as the previous proof.
For $8/3 < \kappa \leq 4$, the martingale is not uniformly bounded,
but we can use an argument using the Girsanov theorem as above..
In the next section, we study $M_\infty$.

\section{(Brownian) boundary bubbles}

The Brownian bubble measure $\nu_\Half(x,x)$ is an
infinite measure on curves $\gamma:(0,t_\gamma) \rightarrow
\Half$ with $\gamma(0+) =\gamma(t_\gamma-) = x$.  There are
a number of ways of defining it.  One way is to consider Brownian
motion starting at $x + \epsilon i$ and conditioned to leave
$\Half$ at $x$.  Instead of thinking of this as a probability
measure, we consider it as a finite measure with total mass
$\pi \, H_\Half(x + \epsilon i,x)  =  1/\epsilon$. Here
$H_\Half$ denotes the Poisson kernel.  (Recall that
$H_\Half(x+\epsilon i , x) = (\pi \epsilon)^{-1} . $) We define
the Brownian bubble measure at $x$ by
\[               \nu_\Half(x) = 
\lim_{\epsilon \rightarrow
0+} {\pi}  
  \, \nu (x+\epsilon i,x)=
\lim_{\epsilon \rightarrow
0+} \frac{1}{\epsilon}  
  \, \nu^\#(x+\epsilon i,x) , \]
where $\nu^\#(x+\epsilon i,x)$ is the probability measure corresponding
to Brownian motion starting at $x + \epsilon i$ conditioned to
leave $\Half$ at $x$. 
If $D$ is a subdomain of $\Half$, then we let $\bubble(x,D)$ be
the $\nu_\Half(x,x)$ measure of all $\gamma$ with $\gamma(0,t_\gamma)
\not\in D$.  If $\dist(x,\Half \setminus D) > 0$, then 
$\bubble(x,D)$ is finite.  We let $\bubble(D) = \bubble(0,D)$.

The quantity $\Gamma(x,D)$ is expressed nicely in terms of
excursion measure.  If $D$ is a domain and $z,w$ are distinct points
in $\p D$ at which the boundary is smooth, the {\em excursion
Poisson kernel} (sometimes called the Dirichlet to Neumann map) is
defined by
\[            H_{\p D}(z,w) = \lim_{\epsilon \rightarrow 0+}
                  \epsilon^{-1} \, H_D(z + \epsilon {\bf n},w), \]
where ${\bf n}$ denotes the inward unit normal at $z$.  If $f:D
\rightarrow D'$ is a conformal transformation and the boundary
is sufficiently smooth, we have the scaling rule
\[            H_{\p D}(z,w) = |f'(z)| \, |f'(w)|
  \, H_{\p D'}(f(z),f(w)). \]
Note that this is the scaling rule from Lecture 1 with $b=1$.
If $D_1 \subset D$ and the boundaries are nice, we define
\[  \bubble_D(z,D_1) = \int_{\p D_1} H_{\p D_1}(z,w)
  \, H_D(w,z) \, |dw|. \]
We could also write this as the integral over $D \cap \p D_1$.
In this notation $\bubble(x,D) = \bubble_\Half(x,D)$.  The
definition of $\bubble_D(z,D_1)$ does not need smoothness of $D
\cap \p D_1$ --- the same integral can be expressed in terms
of Brownian excursion measure.  Using the scaling rule
for the (regular and excursion) Poisson kernel, we get the following
conformal covariance rule
\[   \bubble_D(z,D_1) = |f'(z)|^2 \, \bubble_{f(D)}(f(z),f(D_1)). \]

\begin{example}

Suppose $D = \Disk_+ = \{z \in \Half: |z| < 1\}$.  As
$\epsilon \rightarrow 0$, 
\[                 H_D(\epsilon i, e^{i\theta})
             \sim \frac{2\epsilon}{\pi} \, \sin \theta , \]
and hence
\[ \bubble(D) = \int_0^\pi H_{\p \Disk_+} (0,e^{i\theta})
\, H_{\Half}(e^{i\theta},0) = 
\int_0^\pi \frac{2}{\pi} \, \sin^2 \theta
  \, d\theta = 1. \]
The normalization of the bubble measure is chosen so
that $\bubble(\Disk_+) = 1$. 
\end{example}

  The next proposition (Exercise \ref{ex.bubble})
relates $\bubble$ to the
Schwarzian derivative.

\begin{proposition}  \label{feb27.prop2}
Suppose $D \subset \Half$ is a simply connected
domain with $\dist(0,\Half \setminus
D) > 0$.  Suppose $f: D \rightarrow \Half$
is a conformal transformation.  
Then,
\[            \bubble_\Half(0,D) = -\frac 16 \, Sf(0) , \]
where $S$ denotes Schwarzian derivative.
\end{proposition}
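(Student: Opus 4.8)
The plan is to reduce the left‑hand side to a contour integral, evaluate it by a residue computation at the boundary point $0$, and fix the one undetermined constant by specializing to $D=\Disk_+$. Since $Sf(0)$ is independent of the choice of conformal $f:D\to\Half$ (it is unchanged under post‑composition with a real affine automorphism of $\Half$), and the left side does not involve $f$ at all, I may assume $f$ is normalized by $f(0)=0$, $f'(0)=1$, real on a real neighbourhood of $0$ (possible since $0\in\p D$ and $\p D=\R$ near $0$), and write $f(w)=w+a_2w^2+a_3w^3+\cdots$ with $a_j\in\R$. Then $Sf(0)=6a_3-\frac32(2a_2)^2=6(a_3-a_2^2)$, so the assertion to prove is $\bubble_\Half(0,D)=a_2^2-a_3$.

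The first step is to rewrite $\bubble_\Half(0,D)$ as a contour integral, assuming to begin with that $\p D\cap\Half$ is a smooth arc (or finite union of arcs) bounded away from $0$. Starting from $\bubble_\Half(0,D)=\int_{\p D\cap\Half}H_{\p D}(0,w)\,H_\Half(w,0)\,|dw|$ and the conformal covariance of the excursion Poisson kernel, $H_{\p D}(0,w)=f'(0)\,|f'(w)|\,H_{\p\Half}(0,f(w))$, I would orient each arc of $\p D\cap\Half$ so that $D$ lies to its left; then $f$ maps it increasingly onto a real interval, so $f(w)$ is real there and $f'(w)\,dw=|f'(w)|\,|dw|>0$. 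Using $H_{\p\Half}(0,t)\propto t^{-2}$ and $H_\Half(w,0)\propto\Im(-1/w)$ and pulling the now‑real factor $f'(w)\,dw/f(w)^2$ inside the imaginary part, this gives
\[
\bubble_\Half(0,D)=c\,\Im\int_{\p D\cap\Half}G(w)\,dw,\qquad
G(w):=\frac{-f'(w)}{w\,f(w)^2}=\frac1w\,\frac{d}{dw}\!\left(\frac1{f(w)}\right),
\]
where $c$ is a universal constant to be pinned down at the end.

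Now $G$ is holomorphic on $D$ ($f'$ and $f$ do not vanish there, and $f(w)=0$ only at $w=0\notin D$), with a single pole at the boundary point $0$. From $1/f(w)=w^{-1}-a_2+(a_2^2-a_3)w+\cdots$ one reads off $G(w)=-w^{-3}+(a_2^2-a_3)\,w^{-1}+O(1)$ as $w\to0$, in particular with no $w^{-2}$ term. I would close $\p D\cap\Half$ to a contour bounding a subregion of $D$ on which $G$ is holomorphic, by appending the real part of $\p D$, a small semicircular indentation into $\Half$ around $0$, and (if $\Half\setminus D$ is unbounded) a large semicircle of radius $R$. On the real portions $G$ is real, contributing nothing to $\Im$; the large semicircle contributes $O(R^{-2})\to0$ since $G(w)=O(|w|^{-3})$; and on the indentation the $w^{-3}$ term integrates to $0$ over a half‑loop (single‑valued, even primitive) while the pole contributes $\pm i\pi(a_2^2-a_3)$. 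By Cauchy's theorem the closed contour integral vanishes, so $\Im\int_{\p D\cap\Half}G=\pi(a_2^2-a_3)$ and $\bubble_\Half(0,D)=c\pi(a_2^2-a_3)$. Taking $D=\Disk_+$, whose normalized conformal map to $\Half$ is the Koebe function $f(w)=w/(1-w)^2=w+2w^2+3w^3+\cdots$ (so $a_2^2-a_3=1$), and comparing with the normalization $\bubble(\Disk_+)=1$ of the Example, forces $c\pi=1$; hence $\bubble_\Half(0,D)=a_2^2-a_3=-\frac16 Sf(0)$.

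The step that needs the most care is boundary regularity: for a general $D$ with $\dist(0,\Half\setminus D)>0$, the set $\p D\cap\Half$ need not be rectifiable, $f$ need not extend continuously to it, and neither the excursion kernel $H_{\p D}(0,\cdot)$ nor the orientation used above is literally meaningful. I would deal with this by proving the identity as above for domains bounded by analytic arcs (treating several components of $\p D\cap\Half$ one at a time, and first intersecting with a large half‑disk if $\Half\setminus D$ is unbounded), and then passing to the limit along an exhaustion $D_n\uparrow D$ by such domains with $\dist(0,\Half\setminus D_n)$ bounded below: $\bubble_\Half(0,D_n)\to\bubble_\Half(0,D)$, while Carathéodory convergence together with the Koebe and distortion theorems yields convergence of $f_n$ and of all its derivatives on a fixed disc about $0$, hence $Sf_n(0)\to Sf(0)$. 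Getting the sign of the indentation contribution exactly right is the only remaining subtlety, and it is fixed unambiguously by the $\Disk_+$ computation.
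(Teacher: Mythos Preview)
The paper leaves this proposition as Exercise~4.1 and gives no proof, so there is nothing to compare against directly. Your argument is correct. The identification $G(w)=\frac1w\,\frac{d}{dw}(1/f(w))$, the Laurent expansion $G(w)=-w^{-3}+(a_2^2-a_3)\,w^{-1}+O(1)$ (with no $w^{-2}$ term, since differentiating $1/f$ kills the constant $-a_2$), the half-residue computation, and pinning the constant via $\Gamma(\Disk_+)=1$ are all right. Your check that the Koebe function $w/(1-w)^2$ is the normalized map $\Disk_+\to\Half$ is correct (the identity $k(1/z)=k(z)$ shows the unit semicircle goes to $\R$), and fixing the constant this way neatly sidesteps the normalization ambiguities in the paper's own $\Disk_+$ example.

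One caution about the unbounded case. Your large-semicircle bound $G(w)=O(|w|^{-3})$ relies on $f(w)\sim cw$ at infinity; this holds when $\Half\setminus D$ is bounded (then $f$ is a real affine post-composition of $\Phi_D$), but not in general. Moreover, an exhaustion $D_n\uparrow D$ forces $\Half\setminus D_n\supset\Half\setminus D$, so you cannot reach unbounded complements this way from $\domains$. You would need $D_n\downarrow D$ with $D_n\in\domains$ instead, which requires a little care to keep $D_n$ simply connected. Since the paper only uses the result for $D\in\domains$, this is a minor point.

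A shorter, purely local alternative: the bubble mass of loops at $0$ exiting $D$ is the defect in the excursion Poisson kernel,
\[
\Gamma_\Half(0,D)\;=\;\pi\lim_{\epsilon\to0}\bigl[H_{\p\Half}(-\epsilon,\epsilon)-H_{\p D}(-\epsilon,\epsilon)\bigr].
\]
Using $H_{\p D}(x,y)=\dfrac{f'(x)\,f'(y)}{\pi\,(f(x)-f(y))^2}$ and expanding $f$ to third order gives the limit $(a_2^2-a_3)/\pi$ directly, with no contour and no hypothesis on $\p D\cap\Half$. Your route has the virtue of working straight from the defining integral $\int H_{\p D}H_\Half$; this one is faster and sidesteps the boundary-regularity and unboundedness issues entirely.
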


We can write the local martingale $M_t$ from the previous section
as
 \[  M_t = \left[\exp\left\{- 
 \int_0^t  {a\, \bubble(U_s,g_s(D))}  \, ds
  \right\}\right]^{-\c/2} \, \Phi_t'(U_t)^b, \]
and if $\kappa \leq 4$,
\[   M_\infty = 1\{\gamma(0,\infty) \subset D\}
  \, [e^{- \Theta}]^{-\c/2}, \]
where
\[   \Theta = \Theta(\gamma,D) = 
 a\int_0^\infty  {  \bubble(U_t,g_t(D))}  \, dt. \]
Note that $\Theta$ is a deterministic function of $\gamma$ and $D$.
The factor $a$ comes from the fact that $\gamma$ has been
parametrized so that $\hcap[\gamma(0,t] = at]$.  The value
of $\Theta$ does not depend on the paramertrization; we 
can write
\[   \Theta = \Theta(\gamma,D) = 
  \int_0^\infty  {  \bubble(U_t,g_t(D))}  \; d \hcap(\gamma(0,t]). \]
The very nice feature of $\Theta$ is conformal invariance.
More generally if $\gamma$ is a curve connecting boundary points
in $D$ and $D_1 \subset D$ we can define $\Theta_D(\gamma,D_1)$.

\begin{proposition}  If $D_1 \subset D$; $z,w$ distinct points
on $\p D$; and $\gamma$ a simple curve from $z,w$ in $D$, then
\[       \Theta_D(\gamma;D_1) = \Theta_{f(D)}(f\circ \gamma;
   f(D_1)) . \]
\end{proposition}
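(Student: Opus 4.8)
The plan is to transport the whole configuration to the upper half plane and then reduce the statement to a single scaling identity. Recall that $\Theta_D(\gamma;D_1)$ is defined by choosing any conformal map $\psi\colon D\to\Half$ taking the two endpoints of $\gamma$ to $0$ and $\infty$, setting $\gamma^*=\psi\circ\gamma$ and $D_1^*=\psi(D_1)\subset\Half$, and putting
\[
  \Theta_D(\gamma;D_1)\;:=\;\Theta_\Half(\gamma^*;D_1^*)\;=\;\int_0^\infty \bubble\bigl(U_t,\,g_t(D_1^*)\bigr)\;d\hcap(\gamma^*(0,t]),
\]
where $g_t$ and $U_t$ are the Loewner maps and driving function of $\gamma^*$ as in Section~\ref{restmartsec}, and $g_t(D_1^*)$ is understood as $g_t$ applied to $D_1^*\cap H_t$ if the growing curve disconnects part of $D_1^*$. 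Since the only M\"obius automorphisms of $\Half$ fixing $0$ and $\infty$ are the dilations $z\mapsto rz$ with $r>0$, this definition is legitimate exactly when $\Theta_\Half$ is invariant under such dilations. Granting that, the proposition follows immediately: if $f\colon D\to f(D)$ is conformal and $\psi$ is as above, then $\psi\circ f^{-1}\colon f(D)\to\Half$ is an admissible map for $f(D)$ (it sends the endpoints of $f\circ\gamma$ to $0$ and $\infty$), and $(\psi\circ f^{-1})\circ(f\circ\gamma)=\gamma^*$ while $(\psi\circ f^{-1})(f(D_1))=D_1^*$; hence $\Theta_{f(D)}(f\circ\gamma;f(D_1))=\Theta_\Half(\gamma^*;D_1^*)=\Theta_D(\gamma;D_1)$.

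So the work is entirely in the dilation invariance of $\Theta_\Half$. Let $\hat\gamma(s)=r\,\gamma^*(s/r^2)$ be the image of $\gamma^*$ under $z\mapsto rz$, reparametrized by its own half-plane capacity. A direct substitution into the chordal Loewner equation \eqref{chordalspec} shows that $\hat g_s(\zeta)=r\,g_{s/r^2}(\zeta/r)$ and $\hat U_s=r\,U_{s/r^2}$, so $\hat g_s(rD_1^*)=r\,g_{s/r^2}(D_1^*)$. Invoking the conformal covariance of the Brownian bubble measure, $\bubble_D(z,D_1)=|f'(z)|^2\,\bubble_{f(D)}(f(z),f(D_1))$, specialized to $D=f(D)=\Half$ and $f(z)=z/r$ --- i.e.\ $\bubble(r\zeta,rA)=r^{-2}\,\bubble(\zeta,A)$ --- gives
\[
  \bubble\bigl(\hat U_s,\,\hat g_s(rD_1^*)\bigr)=\bubble\bigl(r\,U_{s/r^2},\,r\,g_{s/r^2}(D_1^*)\bigr)=r^{-2}\,\bubble\bigl(U_{s/r^2},\,g_{s/r^2}(D_1^*)\bigr).
\]
The substitution $s=r^2t$ turns $d\hcap(\hat\gamma(0,s])$ into $r^2\,d\hcap(\gamma^*(0,t])$ by the scaling rule $\hcap(rK)=r^2\hcap(K)$, so
\[
  \Theta_\Half(\hat\gamma;rD_1^*)=\int_0^\infty r^{-2}\,\bubble\bigl(U_t,\,g_t(D_1^*)\bigr)\;r^2\,d\hcap(\gamma^*(0,t])=\Theta_\Half(\gamma^*;D_1^*),
\]
as needed.

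I expect the delicate points to be bookkeeping rather than substance: one must check that the factor $r^{-2}$ from the covariance rule for $\bubble$, the factor $r^2$ from the capacity time change, and the factors of $r$ absorbed into $\hat U_s$ and $\hat g_s$ genuinely cancel, and one must accept the reading of $g_t(D_1^*)$ when the curve swallows part of $D_1^*$ --- harmless, since $g_t$ is a true conformal map of $H_t$ and the covariance rule for $\bubble$ applies verbatim to $D_1^*\cap H_t$. One should also note that, because $\gamma$ is simple and $f$ is conformal on $D$, the image curve $\psi\circ\gamma$ is a genuine Loewner curve, so all the objects above are well defined. A more conceptual argument is also available --- $\frac{\c}{2}\,\Theta_D(\gamma;D_1)$ is the mass, under the Brownian loop measure in $D$, of loops meeting both $\gamma$ and $D\setminus D_1$, and conformal invariance of $\Theta$ then follows from that of the Brownian loop measure --- but the scaling computation above uses only results already developed in these notes.
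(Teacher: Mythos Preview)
Your proof is correct. The paper does not supply an explicit proof of this proposition; the justification is deferred to the following section, where $\Theta_D(\gamma;D_1)$ is identified with the mass, under the unrooted Brownian loop measure in $D$, of loops meeting both $\gamma$ and $D\setminus D_1$, and conformal invariance of $\Theta$ is then inherited from the (asserted) conformal invariance of the unrooted loop measure.

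Your route is genuinely different and more elementary. By defining $\Theta_D$ via transport to $\Half$, you reduce the proposition to the well-definedness of that transport, i.e.\ to invariance of $\Theta_\Half$ under the dilations $z\mapsto rz$, and you verify this by a direct scaling computation: the factor $r^{-2}$ from the covariance rule $\bubble(r\zeta,rA)=r^{-2}\bubble(\zeta,A)$ cancels exactly against the factor $r^2$ from $\hcap(rK)=r^2\hcap(K)$. This uses only material already in hand (Section~\ref{hcapsec} and the covariance rule for $\bubble$), whereas the paper's argument rests on the loop-measure identification, which is itself only stated. The trade-off is that the loop-measure viewpoint is more conceptual and, as the paper's remark notes, extends without change to non-simply-connected $D$; your computation is tied to the chordal Loewner setup in $\Half$. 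You acknowledge both approaches at the end, which is the right thing to do.
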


\begin{remark}  Because $\Theta$ is a conformal {\em invariant},
we can see that we no longer need to assume that $\p D$ is
smooth at $z$.
\end{remark}

\begin{remark}  We do not need $D$ or $D_1$ to be simply
connected in order to define $\Gamma_D(z,D_1)$
or $\Theta_D(\gamma;D_1)$.  However,
Proposition 
\ref{feb27.prop2} which relates the bubble measure to the
Schwarzian derivative does assume that the domain is simply
connected.
\end{remark}

\section{Brownian loop measure}

A {\em (rooted) loop} in $\C$ is a continuous function $\gamma:[0,t_\gamma]
\rightarrow \C$ with $\gamma(0) = \gamma(t_\gamma)$.  There
is a one-to-one correspondence between loops and ordered triples
\[           (z,s, \eta) , \]
where $z \in \C, s > 0$, and $\eta$ is a loop with
$t_{\eta} = 1$ and $\eta(0) = \eta(1)=0$.  The correspondence is
by translation and Brownian scaling,
\[    t_\gamma = s, \;\;\;\;\;
      \gamma(t) = z + s^{1/2} \, \eta(t/s),\;\;\;\;
    0\leq s \leq t_\gamma . \]
An {\em unrooted loop} is an equivalence class of rooted loops under the
equivalence $\tilde \gamma \sim \gamma$ if $\tilde \gamma(t) =
\gamma(t+r)$ for some $r \in \R$ (here addition is modulo $t_\gamma$).
In other words, an unrooted loop is a loop that forgets where its 
starting point is.

\begin{definition} $\;$

\begin{itemize}

\item The {\em (rooted) Brownian loop measure} is the measure on loops
given by putting the following measure on $(z,s,\eta)$:
\[            {\rm area } \times \frac{dt}{2 \pi t^2} \, 
                      \times \mbox{ Brownian bridge},\]
where Brownian bridge refers to the probability measure on Brownian
paths in $\R^2$ conditioned to return to the origin at time $1$.

\item  The {\em (unrooted) Brownian loop measure} is the measure
obtained from the rooted loop measure by forgetting the roots.

\end{itemize}

\end{definition}

\begin{remark}
Recall that the density for a two-dimensional Brownian motion
at time $t$ is $(2 \pi t)^{-1} \, e^{-|z|^2/2t}.$  Roughly speaking,
we can think of $(2 \pi t)^{-1}$ as the probability that a Brownian
motion starting at the origin is at the origin at time $t$.  We
think of the unrooted loop measure as giving measure about
$(2 \pi t)^{-1}$ to each unrooted loop of length $t$.  The rooted loop
measure then chooses a root for the loop by using the uniform distribution
on $[0,t]$; this gives the extra factor of $t^{-1}$ in the
definition.  These heuristics can be made precise by considering
random walk approximations to the loop measure.
\end{remark}

The unrooted loop measure turns out to be conformally invariant.
In fact, $\Theta_D(\gamma,D_1)$ is the measure of unrooted loops
in $D$ that intersect both $\gamma$ and $D_1$. 

\section{The measure $\mu_D(z,w)$ for $\kappa \leq 4$}

The ideas in this lecture can be used to define 
\[           \mu_D(z,w) = C(D;z,w) \, \mu_D^\#(z,w) , \]
for all simply connected domains $D$ and distinct boundary
points $z,w$ at which $\p D$ is locally analytic.  We allow
$w = \infty$.  In this case we say $\p D$ is locally analytic
at $\infty$ if $\p[f(D)]$ is locally analytic at $0$ where
$f(z') = 1/z'$.  Given any two such triples $(D,z,w)$,
$(D_1,z_1,w_1)$ there is a unique
conformal transformation, which we call the canonical
transformation,  
 $f:D \rightarrow D'$  with $f(z) = z_1, f(w) = w_1$ and
$|f'(w)| = 1$ with the appropriate interpretation of this
if  $w= \infty$  or
$w_1 = \infty$. (For example, if
$w= \infty, w_1 \neq \infty$, then as $z' \rightarrow
\infty$, $|f(z') - w_1| \sim |z'|^{-1}.$)
We define $C_\Half(0,\infty) = 1$ and $\mu_\Half(0,\infty)
= \mu_\Half^\#(0,\infty)$ to be the probability measure given
by $SLE_\kappa$.
 We then define
\[           \mu_D(z,w) = |f'(0)|^{-b} \, [f \circ
    \mu_\Half(0,\infty)] , \]
where $f: \Half \rightarrow D$ is the canonical transformation
with $f(0) = z, f(\infty) = w$.   In other words,
\[     C(D;z,w) = |f'(0)|^{-b}, \;\;\;\;
     \mu_\Half^\#(0,\infty) =
    f \circ  \mu_\Half^\#(0,\infty). \]

\begin{example}  The map $f(z) = z/(1-z)$ is the canonical
transformation from $\Half$ to $\Half$ with $f(0) = 0, f(1)
= \infty$.  Using this we see that $C(\Half;0,1) = 1$.
More generally,
\[              C(\Half;x_1,x_2) = |x_2 -x_1|^{-2b}. \]
\end{example}

\begin{example}  If $D \in \domains$, then $\Phi_D^{-1}$
is a canonical
transformation from $\Half$ to $D$.  Therefore,
\[             C(D;x,\infty) = \Phi_D'(x)^b. \]
\end{example} 

We get the following properties.

\begin{itemize}

\item {\bf Conformal covariance}.  If $f: D \rightarrow
f(D)$ is a conformal transformation, then
\begin{equation}  \label{may19.3}
           f \circ \mu_D(z,w) = |f'(z)|^b \, |f'(w)|^b
  \, \mu_{f(D)}(f(z),f(w)) 
\end{equation}
(assuming sufficient smoothness at the boundary points).   

\item {\bf Boundary perturbation}.
If        $D_1 \subset D$, $\p D, \p D_1$ agree
near boundary points $z,w$,      
then
\begin{equation}  \label{radon3}
      \frac{d\mu_{D_1}(z,w)}{d\mu_D(z,w)}\, (\gamma)
= 1\{\gamma \subset D_1\} \, e^{\c \Theta/2} , 
\end{equation}
where $\Theta$ is the measure of the set of Brownian loops
in $D$ that intersect both $\gamma$ and $D \setminus
D_1$.

\item  In particular, if $D_1 \subset D$, $\p D, \p D_1$ agree
near boundary points $z,w$,  and $f: D \rightarrow
f(D)$ is a conformal transformation, then
\[       \frac{d\mu_{D_1}(z,w)}{d\mu_D(z,w)}
            = \frac{d \mu_{f(D_1)}(f(z),f(w))}
                   {d\mu_{f(D)}(f(z),f(w))}. \]

\item  If        $D_1 \subset D$, $\p D, \p D_1$ agree
near boundary points $z,w$, then the probability measure
$\mu_{D_1}^\#(z,w)$ can be obtained from $\mu_D^\#(z,w)$ by
``weighting paths locally'' by $C(D_1;z,w)$, or equivalently
by ``weighting paths locally'' by
\[    \frac{d\mu_{D_1}(z,w)}{d\mu_D(z,w)} . \]

\end{itemize}

\begin{remark}  The last property is stated informally.   A
 precise statement for  $D = \Half, z = 0 , w=\infty$
was given in Section \ref{restmartsec}.  For other $D_1,D$,
we can first map $D$ to $\Half$ and use conformal invariance.
\end{remark}

\begin{remark}  Because the quantity
\[  \frac{d\mu_{D_1}(z,w)}{d\mu_D(z,w)} \]
is a conformal {\em invariant}, it is well defined even if $\p D$
is not smooth near $z,w$ (but still assuming that $\p D, \p D_1$
agree in neighborhoods of $z,w$). 
\end{remark}

\begin{remark}  If $D_1,D_2 \in \domains$ and $f:D_1 \rightarrow D_2$
is a conformal transformation with $f(\infty) = \infty$, we write
$f'(\infty) = r$ if $f(z) \sim z/r$ as $z \rightarrow \infty$.
In particular, if $f(z) = rz$, then $f'(\infty) = 1/r$.  Using this
convention,  \eqref{may19.3} holds for such transformations.  
\end{remark}

\section{Exercises for Lecture 4}

\begin{exercise} \label{ex.bubble}
 Prove Proposition \ref{feb27.prop2}.
\end{exercise}

\begin{exercise}  \label{may30.ex1}
Suppose $\gamma:(0,\infty) \rightarrow \Half$
is a simple curve with $\gamma(0+) = 0, \gamma(z) \rightarrow \infty$
as $z \rightarrow \infty$.  Let $g_t$ be the corresponding conformal
maps.  Fix $\kappa \leq 4$.
Show that if $0 < x < y$,
\[  C(\Half \setminus \gamma(0,t];x,y)
               = \frac{g_t'(x)^b \, g_t'(y)^b}{[g_t(y) - g_t(x)]^{2b}}, \]
and  
\[ C(\Half \setminus \gamma(0,\infty);x,y) = \lim_{t \rightarrow \infty}
   \frac{g_t'(x)^b \, g_t'(y)^b}{[g_t(y) - g_t(x)]^{2b}}. \]
\end{exercise}

\begin{exercise}  Suppose $\kappa \leq 4$.
Suppose $D$ is a bounded, simply connected
domain, and $A_1,A_2$ are disjoint, closed, analytic, subarcs of
$D$ larger than a single point.  Define the measure
\[   \mu_D(A_1,A_2) = \int_{A_1} \int_{A_2} \mu_D(z,w)
  \, |dw| \, |dz|, \]
where $\mu_D(z,w)$ denotes the $SLE_\kappa$ measure and
$ |d\cdot|$ denotes integration with respect to arc length.
We can write
\[     \mu_D(A_1,A_2) = C_D(A_1,A_2) \, \mu_D^\#(A_1,A_2), \]
where 
\[  C_D(A_1,A_2)=\int_{A_1} \int_{A_2} C(D;z,w)  \, |dw| \, |dz|, \]  is the total mass and $\mu_D^\#(A_1,A_2) $
is a probability measure.  Suppose $f:D \rightarrow D_1$
is a conformal transformation with $f(A_1), f(A_2)$ being
analytic arcs.
\begin{itemize}
\item  Convince yourself that the integral makes sense (i.e.,
there is no trouble integrating a ``measure-valued'' function).
\item Show that $0 < C_D(A_1,A_2) < \infty$.
\item  In the cases of self-avoiding walk, loop-erased walk, and
Ising interface, describe what $\mu_D(A_1,A_2)$ represents in
terms of limits of discrete models.
\item  Show that if $\kappa = 2$, then 
\[            f \circ \mu_D(A_1,A_2) = \mu_{D_1}(f(A_1),f(A_2)). \]
In particular, 
\begin{equation} \label{may23.1}
   f \circ \mu_D^\#(A_1,A_2) = \mu_{D_1}^\#(f(A_1),f(A_2)). 
\end{equation}
\item  Show that \eqref{may23.1} does not necessarily hold if
$\kappa \neq 2$.

\end{itemize}

\end{exercise}
\lecture{Radial and two-sided radial $SLE_\kappa$}

\section{Example: SAW II}  \label{may20sec}

We return to the example of the self-avoiding walk (SAW) from
Lecture 1.  We let $z$ be a boundary point and $w$ be an {\em interior}
point and consider the set of SAWs from $z$ to $w$.  We again
give the measure $e^{-\beta |\omega|}$ to each walk where $\beta$ is
the critical value and we let $Z_N(D;z,w)$ be the partition function
as before.  It is conjectured that
\[         Z_N(D;z,w) \sim C(D;z,w) \, N^{-b} \, N^{-\tilde b} , \]
where now we have a  {\em (one-sided)
interior scaling exponent} $\tilde b$.
If we multiply by $N^{b + \tilde b}$ and take a limit, we expect to
have a measure on simple paths from $z$ to $w$ (or $w$ to $z$).  The
model for the continuum limit of this is called {\em (one-sided)
radial $SLE$}.

\begin{figure}[htb]
\begin{center}
\epsfig{file=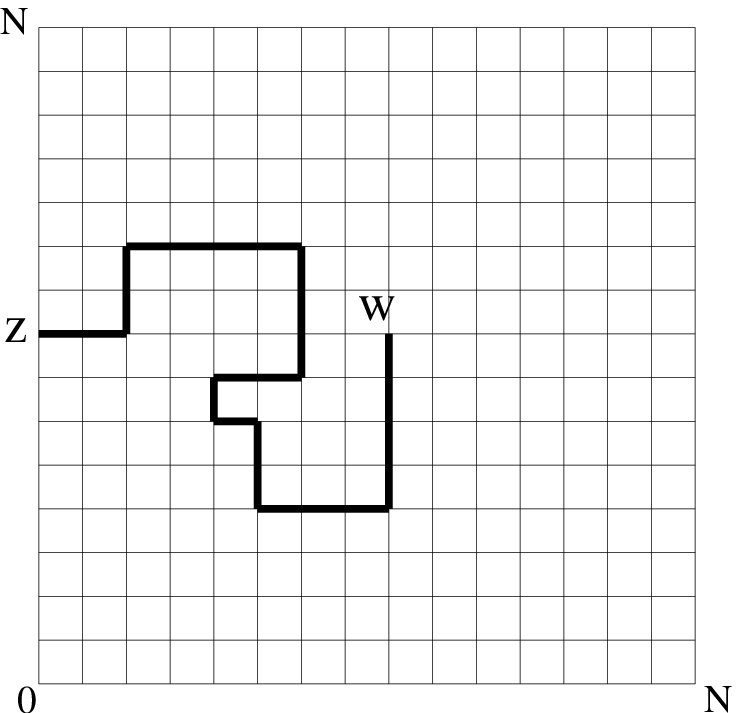}
\caption{Discrete approximation of one-sided radial}  
\end{center}
\end{figure}

We can also consider a case with two boundary points, $z,y$ and
one interior point $w$.  We look at the set of all SAWs $\omega$
from $z$
to $y$ that go through $w$.  Equivalently, we can look at the
set of pairs of SAWs $(\omega,\omega')$ where $\omega$ goes from
$w$ to $z$; $\omega'$ gives from $w$ to $y$; and $\omega \cap
\omega' = \{w\}$.  In this case, we expect the
partition function to scale like
\[     Z_N(D;z,y;w) \sim C(D;z,y;w) \, N^{-2b} \, N^{-\hat b} , \]
where $\hat b$ is a two-sided interior
scaling exponent.  By comparison
to $Z_N(D;z,y)$, we can see that $N^{-\hat b}$ should be comparable
to the probability that a SAW from $z$ to $y$ goes through $w$ which
we can conjecture to be about $N^{d}/N^2$ where $d$ is the fractal
dimension of the paths.  We therefore get
\[                     \hat b = 2-d . \]
Since this relation holds, we do not adopt the notation $\hat b$
but rather just refer to the exponent as $2-d$. 

Consider the marginal measure on $\omega$.  Then for any
$\omega$ from $w$ to $z$ the measure is
\[        Z_N(D \setminus \omega; y,w) . \]
In other words we can first choose $\omega$ using the one-sided
measure but then we weight this distribution by the measure of
walks $\omega'$ from $w$ to $y$ that avoid $\omega$.  We
will be able to look at the scaling limit of the measure
on $\omega$ in two different ways:
chordal $SLE$ from $z$ to $y$ conditioned to go through $w$ or
radial $SLE$ from $z$ to $w$ weighted by the total mass of paths
from $w$ to $y$ in $D \setminus \omega$.  (Both of these
interpretations must be considered in some kind of limit.)
If we fix $\omega$ and consider the probability measure on $\omega'$
obtained by conditioning, then this will be the same as the
probability measure given by the normalized measure on
$D \setminus \omega$.  In the scaling limit, the probability measure
on $\omega'$ given $\omega$ will be $\mu_{D \setminus \omega}^\#(w,y)$
(note that $w$ is a boundary point of $D \setminus \omega$).
 
\begin{figure}[htb]
\begin{center}
\epsfig{file=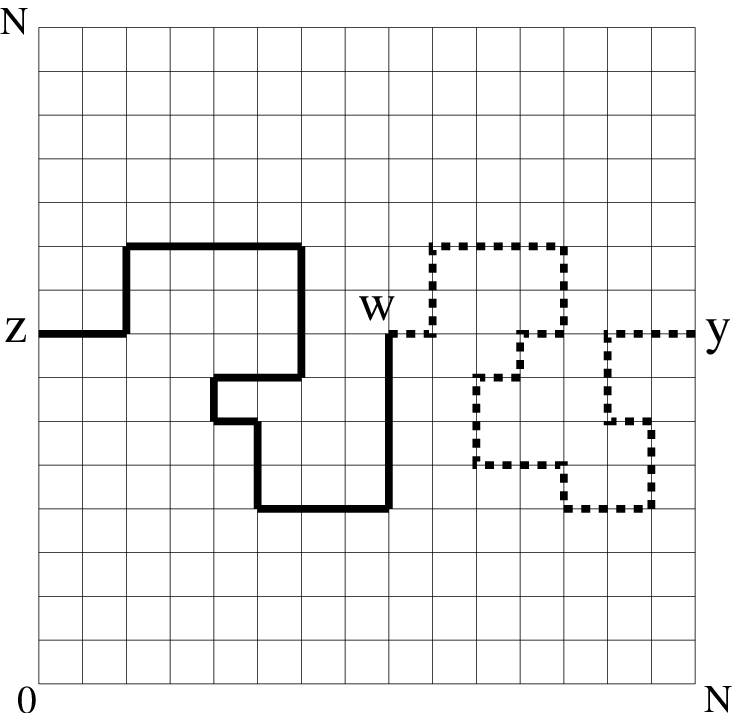}
\caption{Discrete approximation of two-sided radial}  
\end{center}
\end{figure}

Note that 
\[  \frac{Z_N(D;z,y;w)}{Z_N(D;z,w) \, Z_N(D;y,w)}
           \asymp   N^{-(\hat b - 2 \tilde b)}. \]
The left-hand side represents the probability that two
SAWs starting at the origin do not intersect.  If we recall
  from Lecture 1 that we expect this probability
to decay like $|\omega|^{-(\gamma - 1)}$, then we see that
we would expect this quantity to decay like $N^{-d(\gamma - 1)}$.
This gives the following relation between the exponents
\[          d(\gamma - 1) = \hat b - 2 \tilde b = 2-d - 2 \tilde b,
\;\;\;\;\;\;  \gamma = 2\nu(1 - \tilde b). \]

One can also consider the case for two interior points $w,w'$
for which
we would expect the scaling
\[             Z_N(D;w,w') \sim C(D;w,w') \, N^{-2 \tilde b}. \]
The corresponding probability measures are called {\em whole plane
$SLE$}.  We will not consider this case in these lectures. 

\section{Radial $SLE_\kappa$} 

Radial $SLE_\kappa$ is a measure 
\[ \mu_D(z,w) = C_D(z,w) \, \mu_D^\#(z,w) , \;\;\;\;
  z \in \p D, w \in D \]
  on paths $\gamma:(0,t_\gamma]
\rightarrow D$ with $\gamma(0+) = z$ and
$\gamma(t_\gamma) = w$. We will consider paths modulo
reparametrization, but we could also consider the paths
with parametrization.
  It satisfies
the following properties:
\begin{itemize}
\item  {\bf Domain Markov property  (for
$\mu_D^\#(z,w)$)}.  Given $\gamma(0,t]$, the
distribution   of the remainder of the path
is the same as $\mu^\#_{D \setminus \gamma(0,t]}(\gamma(t),
w)$
\item  {\bf Conformal covariance}.  If $f: D \rightarrow f(D)$
is a conformal transformation, then
\begin{equation}  \label{may15.1}
            C(D;z,w) = |f'(z)|^{b} \, |f'(w)|^{\tilde b}
  \, C(f(D);f(z),f(w)), 
\end{equation}
\[           f \circ \mu_D^\#(z,w) = \mu_{f(D)}^\#(f(z),f(w)). \]
\item {\bf Conformal invariance for boundary
perturbations  (for $\kappa \leq 4$)}.  If $w \in D_1 \subset D$;
$z \in \p D$; and $\p D_1,\p D$ agree near $z$; then $\mu_{D_1}(z,w)$
is absolutely continuous with respect to $\mu_D(z,w)$ and the
Radon-Nikodym derivative
\begin{equation}  \label{radon}
                \frac{d\mu_{D_1}(z,w)}{d\mu_D(z,w)} 
\end{equation}
is a conformal invariant.
\end{itemize}

\begin{remark}
Here we have introduced the interior scaling exponent
\[  \tilde b =  \frac{\kappa-2}{4}\, b. \]
We discuss below why this value is chosen --- essentially,
it is the value that makes the quantity on the
right hand side of   \eqref{jun1.1}
a local martingale.
  The total mass $C(D;z,w)$ is nonzero and finite
provided that $\p D$ is locally analytic at $z$.  One
does not need smoothness of the boundary at $z$ to define
the probability measures $\mu_D^\#(z,w)$ or the
Radon-Nikodym derivative \eqref{radon}.
\end{remark}

\begin{remark}
We will normalize so that $C(\Half;0,i) = 1$.  Once we
have done this, 
the scaling rule \eqref{may15.1} determines $C(D;z,w)$ for all
simply connected domains (assuming we have determined
$\tilde b$). The domain Markov property and conformal invariance
will determine $\mu^\#_\Half(0,i)$ and from this we get
$\mu_D^\#(z,w)$ for all simply connected $D$.
\end{remark}

\begin{example}  We compute $C(\Half;0,z)$.
 Let $x > 0$ and 
\[  f(z) = \frac{z(1 + x^2)}{x(x-z)}, \;\;\; f'(z) = -\frac{1 + x^2}
 {(z-x)^2} , \;\;\;\;
   f(i) =   - \frac 1x  + i  ,\]
which is a conformal transformation of $\Half$ onto $\Half$ with
$f(0) = 0$.  Note that
\[           |f'(i)| = 
  1, \;\;\;\; |f'(0)|^{-1}  = 
\frac {x^2}{x^2 + 1}  
 = \pi\, H_\Half\left(- \frac 1x +i,0\right) , \]
where $H_\Half$ denotes the Poisson kernel.  Hence we get
\[          C(\Half; 0, y(x+i)) =
       y^{- \tilde b} \,y^{-b} \, \left[ \pi\, H_\Half\left( x +i,0
  \right)\right]^b
= y^{-\tilde b} \,\left[\pi H_\Half(y(x+i),0) \right]^b  . \]
\end{example}

\begin{example}  We choose (somewhat arbitarily)
\[          C(\Half;\infty,i) =1. \]
The map $F(z) = rz$ satisfies $F'(\infty) = 1/r$, so we get
\[        C(\Half;\infty,x+iy) = C(\Half;\infty,iy) = y^{b - \tilde
  b}, \]
If $w \in D \in \domains$, then $\Phi_D'(\infty)
=1$ and
\[  C(D;\infty,w) =  |\Phi_D'(w)|^{\tilde b} \,C(\Half;\infty,\Phi_D(w)).\]
In particular, if $\gamma$ is a path and $t < T_w$, then
\[  C(\Half \setminus \gamma(0,t]; \infty,w)
         = |g_t'(w)|^{\tilde b} \, Y_t^{b - \tilde b}
   = \distsub_t^{-\tilde b} \, Y_t^b . \]
\end{example}

If $g_t$ is chordal $SLE_\kappa$ from $0$ to $\infty$, then (see
\eqref{may15.2}), 
\begin{equation}  \label{jun1.1}
     M_t = |g_t'(z)|^{\tilde b} \, C(\Half;U_t,g_t(z))
   =|g_t'(z)|^{\tilde b} \, C(\Half;0,Z_t) =
   \distsub_t^{-\tilde b} \,\left[\pi H_\Half (Z_t,0) \right]^b , 
\end{equation}
is a local martingale for $t \leq T_z$ satisfying
\[          dM_t = (1-3a) \, \frac{X_t}{X_t^2 + Y_t^2}
  \, M_t \, dB_t. \]
 We can consider this property as the defining property for
the exponent $\tilde b$. 
If we use the
Girsanov theorem and weight the paths by $M_t$ then under the
weighted measure $U_t = -B_t$ satisfies
\[             dU_t =  (3a-1)  \, 
\frac{X_t}{X_t^2 + Y_t^2} \, dt - dW_t , \]
where $W_t$ is a Brownian motion in the new measure.   This leads
to one definition for the probability measure
$\mu_\Half(0,w)$.

\begin{definition}  Let $w \in \Half$ and
suppose $g_t$ is the solution to the
Loewner equation \eqref{chordalspec} where $U_t$ satisfies
\[    dU_t =  (3a-1)  \, 
\frac{X_t}{X_t^2 + Y_t^2} \, dt - dW_t , \]
where $X_t = X_t(w), Y_t = Y_t(w)$.  Then the corresponding
curve $\gamma(t), 0 < t < T_w$ is {\em radial} $SLE_\kappa$
from $0$ to $w$ stopped at time $T_w$.
\end{definition}

Note that for radial $SLE_\kappa$,
\[   dX_t = \frac{(1-2a) \, X_t}{X_t^2 + Y_t^2} \, dt
          + dW_t, \]
\[  d \Theta_t = \frac{a X_tY_t}{(X_t^2 + Y_t^2)^2} \, dt
              - \frac{Y_t}{X_t^2 + Y_t^2} \, dW_t. \]

\begin{proposition}  $\;$

\begin{itemize}
\item  If $\kappa \leq 4$,  then $T_w <\infty$ and $\gamma(T_w-)
  = w$.
\item  If $4 < \kappa < 8$, then $T_w < \infty$ and $\gamma(T_w-)
\neq w$.
\item  If $\kappa \geq 8$, then $T_w = \infty$.
\end{itemize}
\end{proposition}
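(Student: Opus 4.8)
The plan is to reduce all three statements, as in the proof of Theorem~\ref{theoremphase2}, to the long--time behaviour of a one--dimensional diffusion. Fix $w\in\Half$ and recall that for radial $SLE_\kappa$ the deterministic Loewner formulas for $\p_tY_t$ and $\p_t\distsub_t$ (valid for any driving function) carry no noise term, so $Y_t=\Im[g_t(w)]$ and $\distsub_t=Y_t/|g_t'(w)|$ are strictly decreasing and $\distsub_\infty:=\distsub_{T_w-}$ exists. First I would perform the time change $\p_t\sigma(t)=\hat X_t^2+\hat Y_t^2$, with $\hat X_t=X_{\sigma(t)}$, $\hat Y_t=Y_{\sigma(t)}$, $\hat\Theta_t=\Theta_{\sigma(t)}$, which sends $[0,T_w)$ onto $[0,\infty)$. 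Under it $\hat Y_t=\Im(w)\,e^{-at}$, the radial $\Theta$--equation becomes
\[
   d\hat\Theta_t=a\,\sin\hat\Theta_t\cos\hat\Theta_t\,dt-\sin\hat\Theta_t\,d\hat W_t,\qquad \hat\Theta_t\in(0,\pi),
\]
for a standard Brownian motion $\hat W_t$, and the $\distsub$--equation becomes $\p_t\distsub_{\sigma(t)}=-2a\,\distsub_{\sigma(t)}\sin^2\hat\Theta_t$. Hence
\[
   \distsub_\infty=\Im(w)\exp\Bigl\{-2a\int_0^\infty\sin^2\hat\Theta_r\,dr\Bigr\},\qquad
   T_w=\sigma(\infty)=\Im(w)^2\int_0^\infty\frac{e^{-2ar}}{\sin^2\hat\Theta_r}\,dr,
\]
so everything is controlled by the asymptotics of $\hat\Theta$.

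Second, I would analyse the diffusion $\hat\Theta$. Its scale density is $(\sin\theta)^{-2a}$, non--integrable at $0,\pi$ precisely when $2a\ge1$; equivalently, after a further time change removing $\sin^2$ from the quadratic variation, $\hat\Theta$ near $0$ is a Bessel process of dimension $2a+1=1+4/\kappa$, which reaches the origin exactly when that dimension is less than $2$, i.e., when $\kappa>4$ --- the Bessel dichotomy already used in Proposition~\ref{realprop}. (By Feller's test, even for $\kappa>4$, $\hat\Theta_t$ stays in $(0,\pi)$ for all finite $t$.) Thus for $\kappa\le4$ the process $\hat\Theta_t$ is recurrent in $(0,\pi)$, so $\int_0^\infty\sin^2\hat\Theta_r\,dr=\infty$ a.s.; for $\kappa>4$, $\hat\Theta_t\to0$ or $\pi$ as $t\to\infty$, and since $d\log\hat\Theta_t=(a-1/2)\,dt-d\hat W_t$ near $0$ has negative drift, $\sin^2\hat\Theta_t=e^{(2a-1)t+o(t)}$, so $\int_0^\infty\sin^2\hat\Theta_r\,dr<\infty$ a.s.

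Third, I would read off the cases. If $\kappa\le4$: the first formula gives $\distsub_\infty=0$, and the second gives $T_w<\infty$ a.s.\ (the exponential weight dominating the recurrent excursions of $\hat\Theta$; see below). Since $\distsub_\infty=0$ and, by \eqref{distsub2}, $\dist(w,\R\cup\gamma(0,t])\asymp\distsub_t\to0$, while the $SLE_\kappa$ curve is simple for $\kappa\le4$ (as in Theorem~\ref{simpletheorem}), the tip $\gamma(t)\in\p H_t$ is forced to converge to $w$, i.e., $\gamma(T_w-)=w$. If $4<\kappa<8$: here $\sin^{-2}\hat\Theta_r=e^{(1-2a)r+o(r)}$, so the integrand in the second formula is $e^{(1-4a)r+o(r)}$ with $1-4a<0$, hence $T_w<\infty$; but $\distsub_\infty>0$, so by \eqref{distsub2} $|w-\gamma(t)|\ge\dist(w,\p H_t)\ge\distsub_t/4\ge\distsub_\infty/4>0$ for every $t<T_w$, whence $\gamma(T_w-)\neq w$. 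If $\kappa\ge8$: now $1-4a\ge0$, the integrand is $e^{(1-4a)r+o(r)}$, which is not integrable (at $\kappa=8$ it is $e^{o(r)}$, bounded below by $e^{-2\hat W_r}$, whose integral is infinite since $\{r:\hat W_r\le0\}$ has infinite measure), so $T_w=\sigma(\infty)=\infty$.

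The step I expect to be the main obstacle is the almost--sure control of $\hat\Theta$: establishing $\sin\hat\Theta_t=e^{(a-1/2)t+o(t)}$ a.s.\ for $\kappa>4$ (which fixes the exponent $1-4a$), and, for $\kappa\le4$, showing $\int_0^\infty e^{-2ar}\sin^{-2}\hat\Theta_r\,dr<\infty$ a.s.\ --- there $\hat\Theta$ is recurrent and its excursions towards $\{0,\pi\}$ make $\sin^{-2}\hat\Theta_r$ heavy--tailed, and since $\sin^{-2}\hat\Theta$ need not be integrable for the stationary measure (which fails to be a probability measure at $\kappa=4$), a first--moment bound does not suffice and one needs a genuinely pathwise estimate. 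A second, more topological, point is upgrading ``$\distsub_\infty=0$'' to ``$\gamma(T_w-)=w$'' (and ``$\distsub_\infty>0$'' to ``$\gamma(T_w-)\neq w$''): this requires knowing that radial $SLE_\kappa$ is generated by a continuous curve (a Beurling--type estimate), so that $\lim_{t\to T_w}\gamma(t)$ exists, together with $\gamma(t)\in\p H_t$.
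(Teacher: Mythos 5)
Your proposal is correct and follows the same core mechanism as the paper's proof: reduce everything to the angle process $\Theta_t$, time-change so that a deterministic quantity decays exponentially, and read off the dichotomy from the Bessel comparison at $a=1/2$. The only structural difference is the choice of clock: the paper parametrizes by conformal radius ($\distsub_{\rho(t)}=e^{-2at}$), which yields \eqref{theta} directly, while you parametrize by $\hat Y_t=\Im(w)e^{-at}$ and recover \eqref{theta} only after the further time change removing $\sin^2$ from the quadratic variation; the two clocks differ by exactly that factor $\sin^{-2}$, so the arguments are equivalent. Your version is in fact more complete than the paper's sketch: the paper only records the dichotomy ``$\sin\tilde\Theta_t>0$ for all $t$ when $a\ge 1/2$ versus $\sin\tilde\Theta_t$ hitting $0$ in finite time when $a<1/2$,'' and leaves implicit both the translation into statements about $T_w$ and $\gamma(T_w-)$ and the separation of $4<\kappa<8$ from $\kappa\ge 8$, whereas your formulas for $T_w=\sigma(\infty)$ and $\distsub_\infty$ make the $e^{(1-4a)r+o(r)}$ integrability criterion explicit and correctly locate the transition at $a=1/4$. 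The two gaps you flag are genuine --- the pathwise bound $\int_0^\infty e^{-2ar}\sin^{-2}\hat\Theta_r\,dr<\infty$ for $\kappa\le 4$ (where a first-moment argument fails since $\sin^{-2}$ is not integrable against the invariant density), and the use of the fact that radial $SLE_\kappa$ is generated by a continuous curve to pass from $\distsub_\infty=0$ to $\gamma(T_w-)=w$ --- but neither is addressed in the paper's own (explicitly sketched) proof, so your write-up is at least as rigorous as the original.
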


\begin{proof}
We consider a new parametrization for which the conformal
radius $\distsub_t$ decreases deterministically.  Assume for
ease that $w=x+i$ and let $\rho(t)$ be defined by
$\distsub_{\rho(t)} = e^{-2at}$.  In other words,
\[            \dot \rho(t) = \frac{(X_{\rho(t)}^2
   + Y_{\rho(t)}^2)^2}{Y_{\rho(t)}^2} . \]
This is valid up to the time that $Y_{\rho(t)} = 0$.
Recall that $\distsub_t
\asymp \dist[w,\gamma(0,t] \cup \R]$. Then $\tilde \Theta_t
= \Theta_{\rho(t)}$ satisfies
\begin{equation}  \label{theta}
         d \tilde \Theta_t = a \, \cot \tilde \Theta_t
  \, dt + d \tilde W_t , 
\end{equation}
where $\tilde W_t$ is a standard Brownian motion.  By comparison
with a Bessel process we see that if $a \geq 1/2$, then $
\sin \tilde \Theta_t > 0$ for all $t$, but  if $a < 1/2$, then
$\sin \tilde \Theta_t$ reaches zero in finite time.
\end{proof}

\begin{remark} $\;$ 

\begin{itemize}  

\item  Our definition shows immediately that radial $SLE_\kappa$
up to time $T_w$ is absolutely continuous with respect to
chordal $SLE_\kappa$.  (To be more accurate, we need to stop the process
slightly before $T_w$).

\item  If $\kappa = 2/a = 6$, then  radial $SLE_\kappa$
has the {\em same} distribution as chordal $SLE_\kappa$ up to
time $T_w$.  This is another version of the locality property
for $SLE_6$. 

\item  For $\kappa  > 4$, this definition only defines radial
$SLE_\kappa$ up to time $T_w$. In order to define the
measure $\mu_\Half^\#(0,i)$
we can consider  $\hat g_t$,
  a   conformal transformation of $\Half \setminus
\gamma(0,t]$ onto $\Half$ with $\hat g_t(i) = i$.  This is the
basis of the original defintion which we describe in the
next section. There the definition is done for
$\mu_\Disk^\#(z,0)$.

\end{itemize}

\end{remark}

\section{Another definition}  \label{originalsec}

Here we give an alternative  
definition of radial $SLE_\kappa$ in simply connected domains.
We will define $\mu_\Disk^\#(z,0)$ for $z \in \p \Disk$. Given
$\gamma(0,t]$, let $\tilde g_t$ denote the unique
conformal transformation of $\Disk \setminus \gamma(0,t]$ onto
$\Disk$ with $\tilde g_t(0) = 0, \tilde g_t'(0) > 0$. If we parametrize
the curve so that $\log \tilde g_t'(0) =  t$, then 
 $\tilde g_t$ 
satisfies
\[                    \p_t \tilde g_t(z) =  
  \tilde g_t(z) \, \frac{e^{i2U_t} + \tilde g_t(z)}
                {e^{i 2U_t} - \tilde g_t(z)} , \;\;\;\;
  \tilde g_0(z) = z , \]
  where $\tilde g_t(\gamma(t))
= e^{i 2U_t}$  If we choose $2U_t = B_{\kappa t}$ where 
  $B_t$ is a standard Brownian motion, then 
$\gamma$ has the distribution of $\mu_\Disk^\#(z,0).$
$\mu_D^\#(z,w)$ for other simply connected domains is defined by conformal invariance.  The equivalence of the definitions of 
radial $SLE_\kappa$
 can be checked in a straightforward way by studying the
image of $SLE$ under conformal
transformations.  In studying the equation above it is
often useful to consider the logarithm: suppose $\tilde g_t(e^{2i\theta})
 = e^{2i\tilde h_t(\theta)}. $  (Since $\tilde g_t$ vanishes at the
origin, there are techinical issues in taking the logarithm.  
If we stay in 
a simply connected subdomain of $\Disk$ that avoids the origin, e.g.,
a neighborhood of a boundary point, there is no problem.)  Then
$h_t$ satisfies
\[          \p_t\tilde h_t(\theta) = \frac 12 \, 
\cot\left( h_t(\theta) - \frac 12 \,
    B_{\kappa t}\right). \]    
If $h_t =  \tilde h_{4t/\kappa}$, then this becomes
\[          \p_t h_t(\theta) = a\, 
\cot\left(\tilde h_t(\theta) -   \,
    \tilde W_{ t}\right), \]   
where $\tilde W_t$ is a standard Brownian motion.  In other words
$\Psi_t := h_t(\theta) - \tilde W_t$ satisfies
\[           d\Psi_t = a \, \cot \Psi_t \, dt + dW_t,\]
where $W_t =  - \tilde W_t$ is a standard Brownian motion.    Note
that this is the same equation as \eqref{theta}. If we let
$g_t = \tilde g_{4t/\kappa}$, then
\[ |g_t'(e^{i2\theta})| =  \exp\left\{-\int_0^t \frac{a \, ds}
     {\sin^2 \Psi_s} \right\}. \]
in this parametrization $g_t'(0) = e^{4t/\kappa} = e^{2at}$. 
 
\section{Radial $SLE_\kappa$ in a smaller domain}

The computation starts getting a little messy, but we can
also do radial $SLE$ in a smaller domain.

\begin{proposition}
Let $D \in \domains$ and $w \in D$. 
Let 
\[ K_t = C(D_t;U_t,g_t(z)) = \Phi_t'(U_t)^b \,
   |\Phi_t'(g_t(z))|^{\tilde b}  
  \,  C(\Half;0,L_t), \]
where $L_t = \Phi_t(g_t(z)) - \Phi_t(U_t)$. 
Then,
\[ M_t = \exp\left\{ - \int_0^t
 \frac{a\c}{12} \, S\Phi_t(U_s) \, ds\right\} \, K_t ,\]
is a local martingale.
\end{proposition}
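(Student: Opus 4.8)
The plan is to prove the claim by a direct It\^o computation, in parallel with the proof of Proposition~\ref{feb26.prop1} (the chordal restriction martingale) and with the derivation of \eqref{jun1.1} (radial weighting in $\Half$). The structural point is that $M_t$ factors as $M_t=N_t\,P_t$, where
\[
   N_t=\exp\left\{-\int_0^t\frac{a\c}{12}\,S\Phi_s(U_s)\,ds\right\}\Phi_t'(U_t)^b
\]
is precisely the local martingale of Proposition~\ref{feb26.prop1}, for which we already know $dN_t=-b\,\frac{\Phi_t''(U_t)}{\Phi_t'(U_t)}\,N_t\,dB_t$, and
\[
   P_t=|\Phi_t'(g_t(z))|^{\tilde b}\;C(\Half;0,L_t)
\]
is the ``interior'' factor (here $z$ is the interior target point, which the statement also writes $w$). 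Since $N_t$ is a continuous positive local martingale and $P_t$ is a continuous semimartingale, $d(N_tP_t)=N_t\,dP_t+P_t\,dN_t+d\langle N,P\rangle_t$, so it suffices to compute $dP_t$ and check that the drift of $N_tP_t$ vanishes. Conceptually: weighting chordal $SLE_\kappa$ by $N_t$ turns the law into chordal $SLE_\kappa$ in $D$ (the ``Fact'' of Section~\ref{restmartsec}), and $P_t$ should then be a local martingale under that weighted law; matching the Girsanov drift is what forces the stated values $\tilde b=\frac{\kappa-2}{4}b$ and $\c=\frac{2b(3-4a)}{a}$.

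To compute $dP_t$, I would first record the dynamics of the pieces, all from the Loewner equation. The interior point has finite variation, $\dot g_t(z)=a/Z_t$ with $Z_t=g_t(z)-U_t$. The map $\Phi_t=\Phi_{D_t}$ evolves by the conformal-image flow equation derived earlier, namely $\dot\Phi_t(\zeta)=a\bigl[\Phi_t'(U_t)^2/(\Phi_t(\zeta)-\Phi_t(U_t))-\Phi_t'(\zeta)/(\zeta-U_t)\bigr]$, together with the identity $\dot\Phi_t'(U_t)=a\bigl[\Phi_t''(U_t)^2/(4\Phi_t'(U_t))-\frac23\Phi_t'''(U_t)\bigr]$ quoted just before Proposition~\ref{feb26.prop1}. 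Chasing these through the chain rule (the $1/(\zeta-U_t)$ term and the $\Phi_t''\,\dot g_t$ term cancel) gives
\[
   d[\Phi_t(g_t(z))]=\frac{a\,\Phi_t'(U_t)^2}{L_t}\,dt ,\qquad
   d[\Phi_t'(g_t(z))]=a\,\Phi_t'(g_t(z))\left[\frac{1}{Z_t^2}-\frac{\Phi_t'(U_t)^2}{L_t^2}\right]dt ,
\]
and, using $d[\Phi_t(U_t)]=-b\,\Phi_t''(U_t)\,dt+\Phi_t'(U_t)\,dU_t$ (the computation inside the proof of Proposition~\ref{prop1}),
\[
   dL_t=\left[\frac{a\,\Phi_t'(U_t)^2}{L_t}+b\,\Phi_t''(U_t)\right]dt+\Phi_t'(U_t)\,dB_t .
\]
In particular the martingale part of $L_t$ lies only in the real direction, so $d\langle\Re L\rangle_t=\Phi_t'(U_t)^2\,dt$ and the other brackets vanish.

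Next I would substitute the explicit form $C(\Half;0,L)=(\Im L)^{b-\tilde b}|L|^{-2b}$, which comes from $C(\Half;0,y(x+i))=y^{-\tilde b}[\pi H_\Half(y(x+i),0)]^b$ together with $\pi H_\Half(L,0)=\Im L/|L|^2$, apply It\^o to $C(\Half;0,L_t)$ (only the second real-direction derivative contributes the quadratic-variation correction), multiply by $|\Phi_t'(g_t(z))|^{\tilde b}$ to get $dP_t$, and form the drift of $N_tP_t$. The hard part will be the ensuing bookkeeping: one must include the cross-variation $d\langle N,P\rangle_t$, which is nonzero because $N_t$ carries a $\Phi_t''(U_t)$-driven $dB_t$ term while $L_t$ carries a $\Phi_t'(U_t)\,dB_t$ term, and then see that everything cancels. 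I expect it to organize into three groups: the $Z_t^{-2}$, $L_t^{-2}$ and $L_t^{-1}$ terms cancel among themselves (the ``radial in $\Half$'' part, matching \eqref{jun1.1}, where $\tilde b=\frac{\kappa-2}{4}b$ enters); the terms linear in $\Phi_t''(U_t)/\Phi_t'(U_t)$ cancel against the cross-variation together with the $b\,\Phi_t''(U_t)$ drift in $dL_t$; and the remaining $\Phi_t'''$ and $(\Phi_t'')^2$ terms reassemble, via $\c=\frac{2b(3-4a)}{a}$ and $S\Phi_t(U_t)=\Phi_t'''/\Phi_t'-\frac32(\Phi_t''/\Phi_t')^2$, into exactly $\frac{a\c}{12}S\Phi_t(U_t)$, which the exponential prefactor in $N_t$ then kills --- exactly as in the restriction computation. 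No analytic input beyond continuity of the processes on $[0,T_z)$ is needed; it is all algebra driven by the Loewner equation and the special values of the exponents.
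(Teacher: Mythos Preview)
The paper gives no proof of this proposition, so there is nothing to compare against directly. More importantly, the statement as printed appears to be missing a factor, and your sketch --- while methodologically sound --- would not close up if you actually carried out the algebra.

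Test the degenerate case $D=\Half$. Then $\Phi_t=\mathrm{id}$, $S\Phi_t=0$, $L_t=Z_t$, and the proposition would assert that $K_t=C(\Half;0,Z_t)$ is a local martingale for chordal $SLE_\kappa$. But \eqref{jun1.1} (and the discussion around it) tells you that the local martingale is $|g_t'(z)|^{\tilde b}\,C(\Half;0,Z_t)$, not $C(\Half;0,Z_t)$ alone; the extra factor $|g_t'(z)|^{\tilde b}$ is precisely what fixes $\tilde b$. A short direct check confirms this: the drift of $\log C(\Half;0,Z_t)$ has a $1/|Z_t|^2$ term with coefficient $-[a(b-\tilde b)+b]=-b(3a+1)/2\neq 0$, which no Brownian quadratic variation can absorb. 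The correct local martingale is
\[
  M_t=\exp\left\{-\int_0^t\frac{a\c}{12}\,S\Phi_s(U_s)\,ds\right\}\,|g_t'(z)|^{\tilde b}\,K_t,
\]
i.e.\ $N_t$ times the \emph{full} radial weight $|g_t'(z)|^{\tilde b}\,|\Phi_t'(g_t(z))|^{\tilde b}\,C(\Half;0,L_t)=|(\Phi_t\circ g_t)'(z)|^{\tilde b}\,C(\Half;0,L_t)$, which is exactly (up to a constant and time change) the radial martingale \eqref{jun1.1} for the image curve $\Phi_0(\gamma)$.

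Your decomposition $M_t=N_tP_t$ and your Loewner computations for $\Phi_t(g_t(z))$, $\Phi_t'(g_t(z))$ and $L_t$ are all correct. The place your sketch goes wrong is the sentence ``the $Z_t^{-2}$, $L_t^{-2}$ and $L_t^{-1}$ terms cancel among themselves (the `radial in $\Half$' part, matching \eqref{jun1.1})'': in your $P_t$, the $|\Phi_t'(g_t(z))|^{\tilde b}$ factor contributes $\tilde b\,a\,\Re[1/Z_t^2-\Phi_t'(U_t)^2/L_t^2]$, and the $1/Z_t^2$ piece has nothing to cancel against without the missing $|g_t'(z)|^{\tilde b}$, whose logarithmic derivative $a(Y_t^2-X_t^2)/|Z_t|^4=-\Re[a/Z_t^2]$ is exactly the partner term. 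Once you insert that factor, your three-group organization works and the computation goes through as you described.
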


If $\kappa \leq 4$, then
\eqref{radon3} holds for radial $SLE_\kappa$.  In particular,
for $\kappa = 8/3$, radial $SLE_\kappa$ satisfies the
restriction property. 

\begin{proposition} Suppose $w \in D_1 \subset D$ where $D_1,D$
are simply connected.  Suppose $z \in \p D$; $\p D$ is smooth near
$z$; and $D_1,D$ agree near $z$.  Then if $\gamma$ has the
distribution of $\mu^\#_D(z,w)$, then the probability that
$\gamma(0,t_\gamma] \subset D_1$ is $|F'(z)|^{5/8} \,
 |F'(w)|^{5/48}$ where $F$ is the conformal transformation of
$D_1$ onto $D$ fixing $z,w$.
\end{proposition}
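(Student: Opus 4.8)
The plan is to read this off from the boundary--perturbation (restriction) identity \eqref{radon3} --- which, as noted just above, holds for radial $SLE_\kappa$ whenever $\kappa\le 4$ --- together with the conformal covariance \eqref{may15.1} of the total mass. For $\kappa=8/3$ one has $a=2/\kappa=3/4$, hence boundary scaling exponent $b=\tfrac{3a-1}{2}=\tfrac58$, interior scaling exponent $\tilde b=\tfrac{\kappa-2}{4}\,b=\tfrac5{48}$, and central charge $\c=\tfrac{(3\kappa-8)(6-\kappa)}{2\kappa}=0$. Since $\c=0$, the loop/Schwarzian factor in \eqref{radon3} is absent, so for radial $SLE_{8/3}$
\[
  \frac{d\mu_{D_1}(z,w)}{d\mu_D(z,w)}(\gamma)=\mathbf 1\{\gamma(0,t_\gamma]\subset D_1\}
\]
as an identity of measures on curves from $z$ to $w$ in $D$.

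The rest is bookkeeping. Because $\p D$ is smooth (locally analytic) near $z$ and $\p D_1$ coincides with $\p D$ there, both $C(D;z,w)$ and $C(D_1;z,w)$ are finite and strictly positive, and the conformal map $F\colon D_1\to D$ with $F(z)=z$, $F(w)=w$ extends analytically across $z$, so $|F'(z)|$ and $|F'(w)|$ make sense. Equating total masses of the two sides of the displayed identity, and writing $\mu_D(z,w)=C(D;z,w)\,\mu_D^\#(z,w)$ with $\mu_D^\#(z,w)$ a probability measure, the left-hand mass is $C(D_1;z,w)$ while the right-hand mass is $C(D;z,w)\,\Prob\{\gamma(0,t_\gamma]\subset D_1\}$; hence $\Prob\{\gamma(0,t_\gamma]\subset D_1\}=C(D_1;z,w)/C(D;z,w)$. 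Finally, \eqref{may15.1} applied to $F$ gives $C(D_1;z,w)=|F'(z)|^{b}\,|F'(w)|^{\tilde b}\,C(D;F(z),F(w))=|F'(z)|^{b}\,|F'(w)|^{\tilde b}\,C(D;z,w)$; dividing and inserting $b=\tfrac58$, $\tilde b=\tfrac5{48}$ yields $\Prob\{\gamma(0,t_\gamma]\subset D_1\}=|F'(z)|^{5/8}\,|F'(w)|^{5/48}$.

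No serious difficulty remains once \eqref{radon3} is granted for radial $SLE_{8/3}$; the only points needing attention are the finiteness/positivity of the $C$'s and the boundary regularity of $F$, both supplied by the smoothness hypothesis near $z$. If instead one wished to avoid citing the radial restriction identity, the plan would be to mimic the proof of the chordal Restriction theorem: weight radial $SLE_{8/3}$ in $D$ by the local martingale of the preceding ``radial $SLE_\kappa$ in a smaller domain'' proposition applied with $D_1$ in place of $D$ there --- which, since $\c=0$, is just $M_t=C(g_t(D_1\setminus\gamma(0,t]);U_t,g_t(w))$ --- observe by Girsanov that the weighted law is radial $SLE_{8/3}$ in $D_1$, which almost surely reaches $w$ without leaving $D_1$, and then run optional stopping to identify $M_0=C(D_1;z,w)$. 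The one genuinely delicate point, absent in the chordal case where the corresponding martingale $\Phi_t'(U_t)^{5/8}$ is bounded by $1$, is that here the target $w$ is an interior point, so $\distsub_t\to0$ as $t\uparrow T_w$ and $M_t$ is \emph{not} bounded near the end of the curve; one must therefore localize both by the distance of $\gamma(t)$ to $\p D_1$ and by $\distsub_t$, send the latter localization to its limit first, and use that under the weighted law the curve terminates at $w$ inside $D_1$ to kill the boundary term.
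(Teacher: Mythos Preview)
Your argument is correct and matches the paper's own approach: the paper does not give an explicit proof of this proposition but states just before it that \eqref{radon3} holds for radial $SLE_\kappa$ when $\kappa\le4$, so that for $\kappa=8/3$ radial $SLE$ satisfies the restriction property; your derivation --- setting $\c=0$, integrating the Radon--Nikodym identity to get $\Prob\{\gamma\subset D_1\}=C(D_1;z,w)/C(D;z,w)$, and then applying \eqref{may15.1} to $F$ --- is exactly the intended one-line deduction. Your second paragraph, sketching the direct Girsanov/optional-stopping route and correctly flagging the unboundedness of the martingale near $T_w$ as the new difficulty relative to the chordal case, is a nice complement to the paper's bare statement.
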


\section{Two-sided radial}

Here we let $\kappa < 8$
and introduce a process that can be called either
{\em two-sided radial} $SLE_\kappa$  or  {\em chordal} $SLE_\kappa$
{\em conditioned to go through a point}.  This is  a measure
\[   \mu_D(z_1,z_2;w) =  C(D;z_1,z_2;w) \,   \mu_D^\#(z_1,z_2;w), \]
where $D$ is a domain; $z_1,z_2$ are distinct points in $\p D$;
and $w \in D$.  We define
\[    C(\Half;0,\infty;w) =  G(w) , \]
where $G(y(x+i)) = y^{d-2}\, (x^2 + 1)^{\frac 12 -2 a}$ is the
Green's function for chordal $SLE_\kappa$ as introduced in
Lecture 3.  For other simply connected domains, we define
\[         C(D;z_1,z_2;w) = 
 |f'(0)|^{-b} \, |f'(w')|^{-\hat b} \, G(w')=
 |f'(0)|^{-b} \, |f'(w')|^{d-2} \, G(w'). \]
where $f: \Half \rightarrow D$ is the canonical transformation
with $f(0) = z_1, f(\infty) = z_2$ and $w' = f^{-1}(w) . $ 
It satisfies the following conformal covariance relations.

\begin{itemize}
\item  {\bf Conformal covariance}.  If $f: D \rightarrow f(D)$
is a conformal transformation, then
\begin{equation}  \label{may19.1}
            C(D;z_1,z_2;w) = |f'(z_1)|^{b} \, |f'(z_2)|^{  b}
\, |f'(w)|^{2-d} 
  \, C(f(D);f(z_1),f(z_2); f(w)), 
\end{equation}
\[           f \circ \mu_D^\#(z_1,z_2;w) = \mu_{f(D)}^\#(f(z_1),
f(z_2);f(w)). \]
\item {\bf Conformal invariance for boundary
perturbations} (for $\kappa \leq 4$).  If $w \in D_1 \subset D$;
$z \in \p D$; and $\p D_1,\p D$ agree near $z_1,z_2$; then 
$\mu_{D_1}(z_1,z_2;w)$
is absolutely continuous with respect to $\mu_D(z_1,z_2;w)$ and the
Radon-Nikodym derivative
\begin{equation}  \label{radon2}
                \frac{d\mu_{D_1}(z_1,z_2;w)}{d\mu_D(z_1,z_2;w)} 
\end{equation}
is a conformal invariant.
\end{itemize}

There are two different ways to think of the probability
measure $\mu_D^\#(z_1,z_2;w)$.  We illustrate this with $D = \Half,
z_1=0, z_2=\infty$.

\subsection{Chordal weighted by $G$}

Recall that if $z \in \Half$ and $M_t = M_t(z) =
  |g_t'(z)|^{2-d} \, G(Z_t), $ then $M_t$ is a local martingale
satisfying
\[    dM_t = (1-4a) \, \frac{X_t}{X_{t}^2 + Y_t^2} \, M_t \, dB_t. \]
Hence if we weight the paths by the martingale, then 
\[   dB_t = (1-4a) \, \frac{X_t}{X_{t}^2 + Y_t^2} \, dt + d \hat B_t, \]
where $\hat B_t$ denotes a Brownian motion in the new measure.
In other words,
\[   dX_t = (1-3a) \, \frac{X_t}{X_{t}^2 + Y_t^2} \, dt + d \hat B_t,
 \]
\[   d\Theta_t = \frac{2aX_tY_t}{(X_t^2 + Y_t^2)^2} \, dt 
   - \frac{Y_t}{X_t^2 + Y_t^2} \, d \hat B_t. \]
If we do the time reparametrization as in the previous section, we
can write
\begin{equation}  \label{may21.1}
      d \tilde \Theta_t = 2a \cot \tilde \Theta_t \, dt
   + d \hat W_t , 
\end{equation}
where $\hat W_t$ is a standard Brownian motion (in the weighted
measure). 

\subsection{Radial weighted by radial weights}

We will now show that we can consider two-sided radial $SLE_\kappa$
from $0$ to $w$ in $\Half$ as (one-sided) radial $SLE_\kappa$
weighted by $C(\Half \setminus \gamma(0,t];\infty,w)$. For
ease, let us assume $w=i$.  Let us start
with radial $SLE_\kappa$ parametrized by $\distsub_t$ and 
recall that 
\[  C(\Half \setminus \gamma(0,\rho(t)];\infty,w)
                = \distsub_{\rho(t)}^{-\tilde b} \,  Y_{\rho(t)}^b
            = e^{-2a\tilde b} \,  \tilde Y_t^b. \]
Note that
\[  \p_t \tilde Y_t  =  - a  \, \csc^2 \, \tilde \Theta_t \, \tilde Y_t , \]
i.e.,
\[        J_t :=  \tilde Y_t^b= \exp\left\{-ab \int_0^t \frac{ds}
{\sin^2 \tilde \Theta_s}
  \right\}. \]
Recall that for radial $SLE_\kappa$, $\tilde \Theta_t$ satisfies
\eqref{theta}.  Let us now assume that there is a function
$\phi(\theta)$ such that
\[ \phi(\theta) = \lim_{t \rightarrow \infty} \, e^{\lambda t}
  \, \E[J_t \mid \tilde \Theta_0 = \theta]. \]
Then $e^{\lambda t} \, \phi(\tilde \Theta_t)\, J_t$ is a martingale.
Using It\^o's formula, we get the equation
\[   
  {\phi''(\theta)}  + 2 a \, 
              \phi'(\theta) \, \cot \theta  +
  \left(2\lambda - {2ab} \csc^2 \theta \right) \,
  \phi(\theta) = 0 , \]
which gives $\phi(\theta) = e^{3at/2} \, \sin^a \theta$.
Therefore, $M_t = e^{3at/2} \, \sin^a \tilde \Theta_t$
satisfies
\[           dM_t = a \, \cot \tilde \Theta_t \, dW_t. \]
If we do Girsanov and weight by $M_t$, then $\tilde \Theta_t$
satisfies
\[            d \tilde \Theta_t = 2a \,  \cot \tilde \Theta_t 
 \, dt + d \tilde W_t , \]
where $\tilde W_t$ is a standard Brownian motion in the new measure.
 Note that this is the same as \eqref{may21.1}.

\section{Exercises for Lecture 5}

\begin{exercise}  Redo Section \ref{may20sec} for  
  loop-erased walk.
\end{exercise}

\begin{exercise}  \label{exer.may19}
Verify \eqref{may19.1}.
\end{exercise}

\lecture{Intersection exponents}

In this lecture we will compute the intersection exponents
for $SLE_\kappa, \kappa \leq 4$.  The particular case
of $\kappa = 8/3$ gives the Brownian motion intersection
exponents.  We fix $a = 2/\kappa \geq 1/2$ and   constants 
 depend on $a$.

\section{One-sided}

We assume that $g_t$ satisfies \eqref{chordalspec}  
where $U_t = - B_t$ is a standard Brownian motion. If $z \in
\Half$ we write $Z_t = Z_{t,z} = g_t(z) - U_t$ and we write
$Z_t = X_t + i Y_t$.  Recall that 
\[   dX_t = \frac{aX_t}{X_t^2 + Y_t^2} \, dt + dB_t, \;\;\;\;\;
   dY_t = - \frac{aY_t}{X_t^2 + Y_t^2} \, dt. \]
\begin{equation}  \label{oct5.6.complex}
         d|g_t'(z)| = |g_t'(z)| \, \frac{a(Y_t^2 - X_t^2)}{(X_t^2 + Y_t^2)^2}
  \, dt. 
\end{equation}
We will consider the case $z = x \in \R\setminus
\{0\}, t \leq
T_x$,  for which 
$g_t'(x)  \in (0,1]$, 
   \begin{equation}  \label{oct5.6}
       dX_t = \frac{a}{X_t} \, dt + dB_t,\;\;\;\;
   \p_t g_t'(x)  =   -\frac{a\, g_t'(x)}{X_t^2}, \;\;\;\;
   g_t'(x) = \exp\left\{-a\int_0^t \frac{ds}{X_s^2} \right\} 
   . 
\end{equation}
Define
\[ \lambda_0 = \lambda_0(a) = -\frac{(2a-1)^2}{8a}, \]
  \begin{equation}  \label{oct5.5}
   q(\lambda) = q_{+}(\lambda) = q(\lambda;a) =
  \frac{(1-2a) + \sqrt{(2a-1)^2 + 8a\lambda}}{2}, \;\;\;\;
\lambda \geq \lambda_0,
\end{equation}
\[   q_-(\lambda) = q_-(\lambda;a) =
  \frac{(1-2a) - \sqrt{(2a-1)^2 + 8a\lambda}}{2}, \;\;\;\;
\lambda \geq \lambda_0. \]
Note that $q = q_\pm(\lambda)$ is a solution to the
quadratic equation

\begin{equation}  \label{oct5.5.1}
q^2 + (2a-1) \, q - 2a\lambda = 0 , 
\end{equation}
and  that
\begin{equation}  \label{oct5.5.2}
 q(\lambda_1) + q(\lambda_2) = q\left(\lambda_1 + \lambda_2
   + \frac{q(\lambda_1)q(\lambda_2)}a \right). 
\end{equation}

\begin{proposition}
Suppose $x > 0,$ $\lambda \geq \lambda_0$,
  $q = q(\lambda)$.
Then
$           M_t := X_t^q \,  g_t'(x) ^\lambda $
is a   martingale satisfying
\begin{equation}  \label{may21.6}
   dM_t = M_t \, \frac{q}{X_t} \, dB_t. 
\end{equation}
If $q = q_-(\lambda)$, the same holds with ``local martingale''
replacing ``martingale''.
\end{proposition}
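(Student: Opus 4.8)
The plan is to obtain the stochastic differential of $M_t$ directly from It\^o's formula and then recognize the algebraic condition on $q$ that kills the drift. First I would record, from \eqref{oct5.6}, that for $z = x > 0$ the process $X_t$ satisfies $dX_t = (a/X_t)\,dt + dB_t$ (so $d\langle X\rangle_t = dt$), while $g_t'(x) = \exp\{-a\int_0^t X_s^{-2}\,ds\}$ is of bounded variation with $\partial_t g_t'(x) = -a\,g_t'(x)/X_t^2$; since $a \ge 1/2$, $X_t$ is a Bessel process of dimension $2a+1\ge 2$, hence never reaches $0$, so $T_x = \infty$ and $M_t$ is well defined for all $t$. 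Applying It\^o's formula to $M_t = X_t^q\,g_t'(x)^\lambda$ (the two factors have no common quadratic variation, the second being of bounded variation) yields
\[
 dM_t = \frac{q}{X_t}\,M_t\,dB_t + \frac{M_t}{X_t^2}\Bigl(qa + \frac{q(q-1)}{2} - \lambda a\Bigr)\,dt ,
\]
and the $dt$-coefficient vanishes exactly when $q^2 + (2a-1)q - 2a\lambda = 0$, i.e.\ when $q = q_\pm(\lambda)$, by \eqref{oct5.5.1}. This already shows that for either root $M_t$ is a nonnegative local martingale obeying \eqref{may21.6}; in particular the assertion for $q = q_-(\lambda)$ is complete.

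It remains to promote ``local martingale'' to ``martingale'' when $q = q_+(\lambda)$, and this is where the real work lies. I would invoke the explosion criterion recalled in Section~\ref{girsanovsec}: a nonnegative local martingale of the form $dM_t = J_t M_t\,dB_t$ is a genuine martingale on $[0,t]$ unless, after Girsanov-weighting the paths by $M_t$, the driving diffusion reaches (with positive probability before time $t$) the boundary at which its coefficients blow up. Here $J_t = q/X_t$, so weighting by $M_t$ adds the drift $q/X_t$ to $B_t$, and under the weighted measure $dX_t = ((a+q)/X_t)\,dt + dW_t$; that is, $X_t$ becomes a Bessel process of dimension $\delta := 2(a+q)+1$. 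The key algebraic fact is that $q_+(\lambda) \ge q_+(\lambda_0) = (1-2a)/2$, so $a + q_+ \ge 1/2$ and $\delta \ge 2$; a Bessel process of dimension at least $2$ never reaches the origin, and no Bessel process runs to $+\infty$ in finite time, so no mass escapes and $M_t$ is a martingale. (For $q = q_-(\lambda)$ one instead gets $a + q_- \le 1/2$, so the weighted process is a Bessel process of dimension $\le 2$ which can hit $0$; this is precisely why $M_t$ is in general only a local martingale there --- but that weaker property was already established by the It\^o computation above.)

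I expect the Girsanov step to be the main obstacle, and the point to watch is the identification of the ``dangerous'' boundary for the equation $dM_t = (q/X_t)M_t\,dB_t$: it is $\{X_t = 0\}$ --- where both the Bessel drift $a/X_t$ and the integrand $X_s^{-2}$ defining $g_t'(x)$ blow up --- together with $\{X_t = \infty\}$. Once one checks that under the $M_t$-weighted measure the process $X_t$, being Bessel of dimension $\ge 2$ when $q = q_+$, stays away from $0$ for all time and does not explode, the criterion of Section~\ref{girsanovsec} applies verbatim and closes the argument; everything else is the routine It\^o bookkeeping displayed above.
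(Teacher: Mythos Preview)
Your proof is correct and follows essentially the same approach as the paper: an It\^o computation showing the drift vanishes exactly when $q$ satisfies \eqref{oct5.5.1}, followed by the Girsanov argument that under the $M_t$-weighted measure $X_t$ becomes a Bessel process with drift coefficient $a+q \ge 1/2$ (hence never reaching $0$), which by the criterion of Section~\ref{girsanovsec} upgrades the local martingale to a genuine martingale when $q = q_+(\lambda)$.
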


\begin{proof} 
It\^o's formula gives
\[   dX_t^q = X_t^q \, \left[\frac{qa + \frac 12 q (q-1)}{X_t^2}
  \, dt + \frac q{X_t} \, dB_t \right], \]
and hence the product rule and (\ref{oct5.6}) give
\[    dM_t = M_t \, \left[ \frac{qa + \frac 12 q (q-1)
  - a \lambda}{X_t^2} \, dt + \frac{q}{X_t} \, dB_t \right] . \]
  By (\ref{oct5.5.1}), we see
that  $M_t$ is a local
martingale. 
If we apply Girsanov and weight the paths by $M_t$,  
the paths $B_t$ satisfy
\[        dB_t = \frac{q}{X_t} \, dt + dW_t, \]
where $W$ is a standard Brownian motion in the new weighting.
Hence $X_t$ satisfies
\begin{equation}  \label{kapparho}
            dX_t = \frac{a+q}{X_t} \, dt + dW_t. 
\end{equation}
If $\lambda \geq \lambda_0$ then
$q \geq  (1/2) - a$ and hence $a + q \geq 1/2$.
Therefore, the weighted paths never reach the origin.  Also,
since the weighted paths follow a Bessel process, one
can use the criterion from Section \ref{girsanovsec} to see that $M_t$
is actually a martingale.
\end{proof}

If we consider the pair of processes $U_t = - B_t$ and $K_t =
  g_t(x)$, then the pair of processes $(U_t,K_t)$ satisfy
the simple system
\begin{equation}  \label{kapparho2}
  dU_t =  \frac{q}{U_t-K_t} \, dt - dW_t, \;\;\;\;\;
            \p_tK_t = \frac{a}{K_t - U_t}. 
\end{equation}
This is an example of an $SLE(\kappa,\rho)$ process where 
$\rho = q\kappa$.  (The parameter $\rho$ is the coefficient of
the drift of $U_t$ if the process is parametrized so that
$\hcap(\gamma(0,t]) = 2t$.  This is probably  not the best
way to parametrize these processes.)

\begin{example}  Let $x > 0$, $\lambda = b, q = q(\lambda)
 = a$.  Note that
$C(\Half \setminus \gamma(0,t]; x,\infty) = g_t'(x)^{b}$.
Then \eqref{kapparho} becomes
\[   dX_t = \frac{2a}{X_t} \, dt + dW_t, \]
and \eqref{kapparho2} becomes
\begin{equation}  \label{kapparho3}
 dU_t =  \frac{a}{U_t-K_t} \, dt - dW_t, \;\;\;\;\;
            \p_tK_t = \frac{a}{K_t - U_t}. 
\end{equation}
This is the $SLE(\kappa,\rho)$ process with $\rho = a \kappa
= 2$. It is also the measure for {\em two-sided chordal $SLE_\kappa$}
from $(0,x)$ to $\infty$ in $\Half$.  We can let $x \rightarrow 0$ and
obtain {\em two-sided chordal $SLE_\kappa$}
from $0$ to $\infty$ in $\Half$.  If we are only interested
in the distribution
of  $X_t = K_t - U_t$ we can replace \eqref{kapparho3} with
\[  dU_t =  \frac{a}{U_t-K_t} \, dt - \sqrt{r(t)} \, dW_t^1, \;\;\;\;\;
            d K_t = \frac{a}{K_t - U_t} \, dt - \sqrt{1 - r(t)}
  \, dW_t^2 , \]
where $ 0 \leq r(t) \leq 1$ and $W^1,W^2$ are independent standard
Brownian motions.  The value of $r(t)$ determines the ratio
of growth rates of the two paths. 
\end{example}

\begin{example}  Consider $\mu_\Half(0,x)
 = x^{-2b} \, \mu_\Half^\#(0,x).$  We can obtain
the  probability
measure $\mu_\Half^\#(0,x)$ from 
  $SLE_\kappa$ from $0$ to $\infty$
 by weighting by $X_t^{-2b}$, i.e.,
by choosing $q_- = -2b$. Then
\eqref{kapparho} becomes
\[            dX_t = \frac{1-2a}{X_t} \, dt + dW_t , \]
and
\eqref{kapparho2} becomes
\[ dU_t =  \frac{-2b}{U_t-K_t} \, dt - dW_t, \;\;\;\;\;
            \p_tK_t = \frac{a}{K_t - U_t}. \] 
This is $SLE(\kappa,\rho)$ with
\[          \rho =  -2b \kappa = \kappa - 6. \]
In this case the local martingale $M_t$ is not a martingale.
This can be see by noting that the amount of time (in $\hcap$
from infinity) to go from $0$ to $x$ is finite, i.e., there
is explosion in finite time.
\end{example}

\begin{proposition} \label{prop.may23} If $\lambda \geq \lambda_0$ and
$q = q(\lambda)$, there  is a constant $c$ such that
for all $t,x> 0$,
\[   \E[g_{tx^2}'(x)^\lambda] = \E[g_t'(1)^\lambda] \sim 
                c \,   t^{-q/2} . \]
\end{proposition}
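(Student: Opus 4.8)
The plan is to reduce the statement to computing a negative moment of a Bessel process. First I would dispose of the identity $\E[g_{tx^2}'(x)^\lambda] = \E[g_t'(1)^\lambda]$ by scaling: the dilation invariance of $SLE_\kappa$ recorded at the start of Lecture~3 says $s \mapsto x^{-1} g_{x^2 s}(xz)$ has the same law as $s \mapsto g_s(z)$; differentiating in $z$ and setting $z = 1$ gives $g_{x^2 t}'(x) \stackrel{d}{=} g_t'(1)$, hence the claimed equality. So it remains to prove $\E[g_t'(1)^\lambda] \sim c\, t^{-q/2}$. Throughout write $X_t = X_t(1) = g_t(1) - U_t$; then $X_0 = 1$, $dX_t = (a/X_t)\,dt + dB_t$, and by \eqref{oct5.6} $g_t'(1) = \exp\{-a\int_0^t X_s^{-2}\,ds\} \in (0,1]$ for all $t$ (recall $T_1 = \infty$ because $a \ge 1/2$).

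The next step is a Girsanov change of measure. By the preceding proposition, $M_t := X_t^q\, g_t'(1)^\lambda$ is a nonnegative martingale with $M_0 = 1$ and $dM_t = (q/X_t)\,M_t\,dB_t$; let $\Q$ be the measure with $d\Q/d\Prob|_{\F_t} = M_t$. Since $g_t'(1)^\lambda = M_t\,X_t^{-q}$, we get
\[ \E[g_t'(1)^\lambda] = \E_\Q[X_t^{-q}] . \]
Under $\Q$, Girsanov together with \eqref{kapparho} shows $X$ solves $dX_t = \frac{a+q}{X_t}\,dt + dW_t$ with $X_0 = 1$, i.e.\ $X$ is a Bessel process of dimension $\delta := 1 + 2(a+q)$. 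Since $\lambda \ge \lambda_0$ forces $q \ge \frac12 - a$, we have $\delta \ge 2$, so $X$ never reaches $0$ --- consistent with $M$ being a true martingale, as claimed in the preceding proposition.

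It then remains to find the large-$t$ behaviour of $\E_\Q[X_t^{-q}]$. I would substitute the explicit Bessel transition density $p_t(1,y) = \frac{y^{\nu+1}}{t}\exp(-\frac{1+y^2}{2t})\,I_\nu(\frac{y}{t})$ with $\nu := \frac\delta2 - 1 = a + q - \frac12 \ge 0$ into $\E_\Q[X_t^{-q}] = \int_0^\infty y^{-q} p_t(1,y)\,dy$, change variables $y = r\sqrt t$, and pass to the limit using $t^{\nu/2}I_\nu(r/\sqrt t) \to r^\nu/(2^\nu\Gamma(\nu+1))$ as $t \to \infty$, with dominated convergence justified by standard bounds on $I_\nu$. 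This gives
\[ \E_\Q[X_t^{-q}] \sim \frac{t^{-q/2}}{2^\nu\Gamma(\nu+1)}\int_0^\infty r^{1+2\nu-q}e^{-r^2/2}\,dr = c\,t^{-q/2}, \qquad c = 2^{-q/2}\,\frac{\Gamma(a + \frac q2 + \frac12)}{\Gamma(a + q + \frac12)}, \]
the integral being finite because $1 + 2\nu - q = 2a + q - 1 > -1$ (i.e.\ $q > -(2a+1)$), which holds since $q \ge \frac12 - a$. Combining with the two previous displays and the scaling identity finishes the proof. One may instead avoid the closed-form density: $X_t/\sqrt t$ converges in law to the Bessel entrance law from $0$ (density proportional to $r^{\delta-1}e^{-r^2/2}$), and one upgrades this to convergence of the $(-q)$-th moment --- but this shifts the work onto the uniform-integrability estimate below.

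The only genuinely delicate point is precisely that upgrade: weak convergence of $X_t/\sqrt t$ does not by itself yield convergence of $\E_\Q[(X_t/\sqrt t)^{-q}]$, so one must rule out mass escaping to $0$ (if $q > 0$) or to $\infty$ (if $q < 0$) via a uniform-integrability bound. Working directly with the explicit transition density sidesteps this cleanly --- the elementary bound on $I_\nu$ supplies the dominating function outright --- which is why I would organise the write-up around the closed form. Everything else (the scaling identity, the change of measure, and the evaluation of the constant) is routine given the earlier propositions.
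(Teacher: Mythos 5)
Your proposal is correct and follows essentially the same route as the paper's own (sketched) proof: reduce to $x=1$ by scaling, weight by the martingale $M_t = X_t^{q}\,g_t'(1)^{\lambda}$ to get $\E[g_t'(1)^{\lambda}] = \E_{\Q}[X_t^{-q}]$ for a Bessel process of dimension $1+2(a+q)$, and identify $c$ as the $(-q)$-th moment of the limiting law of $X_t/\sqrt{t}$; your use of the explicit transition density merely supplies the dominated-convergence detail that the paper leaves implicit. One small typo: with $\nu = a+q-\tfrac12$ the exponent in your final integral is $1+2\nu-q = 2a+q$ rather than $2a+q-1$, which is in fact what your stated constant $c$ and your convergence condition $q > -(2a+1)$ already correspond to.
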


\begin{proof} (Sketch)
 The equality holds by scaling so we may assume $x=1$. 
This is a fact about Bessel processes; we are computing
$  \E [J_t]$ where
\[            J_t = \exp\left\{-a\lambda \int_0^t 
  \frac{ds}{X_s^2} \right\} , \]
and $X_t$ is a Bessel process satisfying
\[  dX_t = \frac a{X_t} \, dt + dB_t. \]
We know that $M_t = J_t \, X_t^q$ is a martingale satisfying
\eqref{may21.6}.  If we weight the paths by $M_t$, then the
weighted paths satisfy \eqref{kapparho} where $W_t$ is
a standard Brownian motion in the new measure. 
In particular,
\[ \E[J_t] = \, \E[M_t \, X_t^{-q}] = M_0 \,\tilde  \E[X_t^{-q}], \]
where $\tilde \E$ denotes expectations in the new measure
$\tilde \Prob$.
In the 
$\tilde \Prob$ measure, $X_t$ satisfies
\eqref{kapparho}, and hence has a limiting distribution 
 proportional to
  $ r^{2(a+q)} \, e^{-r^2/2}$.   The constant $c$ is the
expectation of $X^{-q}$ with respect to this limit measure.
\end{proof}

\begin{remark}
We will interpret $q$ as an ``intersection exponent'' or
``crossing exponent'' for
$SLE_\kappa$.   Suppose
$L > 0$ is large (we will take asymptotics as $L \rightarrow
\infty$) and $\gamma$ is an $SLE_\kappa$ curve from $0$
to $\infty$.  Let us consider (see Exercise \ref{may30.ex1})
\[     
  C(\Half \setminus \gamma(0,\infty); 1,L) 
                 = \lim_{t \rightarrow \infty} 
 \frac{g_t'(1)^b\, g_t'(L)^b}
    {(g_t(L) - g_t(1))^{2b}} . \]
Since the diameter of $\gamma(0,t]$ is of order $\sqrt t$, we can
expect that
\[ C(\Half \setminus \gamma(0,\infty); 1,L) 
          \asymp  \frac{g_{L^2}'(1)^b\, g_{L^2}'(L)^b}
  {(g_{L^2}(L) - g_{L^2}(1))^{2b}} 
             \asymp    L^{-2b} \,   g_{L^2}'(1)^b , \]
\[   \E\left[ C(\Half \setminus \gamma(0,\infty); 1,L)^r\right]
          \asymp \E\left[\frac{g_{L^2}'(1)^{br}\, g_{L^2}'(L)^{br}}
    {(g_{L^2}(L) - g_{L^2}(1))^{2br}}\right]
             \asymp    L^{-2br} \, \E[  g_{L^2}'(1)^{br}]. \]
Or, in other words,
\begin{equation}  \label{may30.1} 
  {\E\left[ C(\Half \setminus \gamma(0,\infty); 1,L)^r\right] }
        \asymp  L^{ -q(br)-2b(r-1)}. 
\end{equation}

\end{remark}

\section{Two-sided}  \label{intersec}

We will now do a similar result for two processes.

\begin{lemma}  Suppose $y < 0 < x$,
$X_t = g_t(x) - U_t, \tilde X_t = U_t - g_t(y)$.
Let $\lambda_1,\lambda_2 \geq \lambda_0$ 
and let $q = q(\lambda_1), \tilde q = q(\lambda_2), r
= q\tilde q/a$. 
 Then
\[    M_t = X_t^{q} \, \tilde X_t^{\tilde q}
   \, (X_t + \tilde X_t)^r \,
   g_t'(x)^{\lambda_1} \, g_t'(y)^{\lambda_2} \]
is a martingale.
\end{lemma}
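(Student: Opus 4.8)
The plan is to mimic the one-sided computation: apply It\^o's formula to $M_t$, use the quadratic identity \eqref{oct5.5.1} together with the defining relation $r = q\tilde q/a$ to cancel the drift, and then upgrade the resulting local martingale via the Girsanov argument of Section \ref{girsanovsec}. First I would record the dynamics. With $U_t = -B_t$ and $g_t$ solving \eqref{chordalspec} with $a(t)=at$, the points $X_t = g_t(x) - U_t$ and $\tilde X_t = U_t - g_t(y)$ satisfy
\[
  dX_t = \frac{a}{X_t}\,dt + dB_t, \qquad d\tilde X_t = \frac{a}{\tilde X_t}\,dt - dB_t,
\]
and, as in \eqref{oct5.6}, $g_t'(x) = \exp\{-a\int_0^t X_s^{-2}\,ds\}$ and $g_t'(y) = \exp\{-a\int_0^t \tilde X_s^{-2}\,ds\}$. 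The simplifying point is that $S_t := X_t + \tilde X_t = g_t(x) - g_t(y)$ has no Brownian part: it is $C^1$ in $t$ with $\dot S_t = aS_t/(X_t\tilde X_t) > 0$, so $S_t$ is nondecreasing and $S_t \ge x - y > 0$ for all $t$.

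Next I would apply It\^o's formula to $\log M_t = q\log X_t + \tilde q\log\tilde X_t + r\log S_t + \lambda_1\log g_t'(x) + \lambda_2\log g_t'(y)$. The $dB_t$-coefficient of $d\log M_t$ is $q/X_t - \tilde q/\tilde X_t$, so $dM_t/M_t = d\log M_t + \tfrac12\,d\langle\log M\rangle_t$ has $dt$-coefficient
\[
  \frac{\tfrac12\bigl(q^2 + (2a-1)q - 2a\lambda_1\bigr)}{X_t^2}
  + \frac{\tfrac12\bigl(\tilde q^2 + (2a-1)\tilde q - 2a\lambda_2\bigr)}{\tilde X_t^2}
  + \frac{ar - q\tilde q}{X_t\tilde X_t}.
\]
All three numerators vanish: the first two by \eqref{oct5.5.1} applied to $\lambda_1$ and $\lambda_2$ (since $q = q(\lambda_1)$, $\tilde q = q(\lambda_2)$), the third by $r = q\tilde q/a$. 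Hence $dM_t = M_t\,(q/X_t - \tilde q/\tilde X_t)\,dB_t$, so $M_t$ is a local martingale.

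Finally I would promote this to a genuine martingale exactly as in the one-sided Proposition: localize at $\tau_n = \inf\{t : M_t \ge n\}$, weight by the bounded martingale $M_{t\wedge\tau_n}$, and apply Girsanov. In the tilted measure $B_t$ gains drift $q/X_t - \tilde q/\tilde X_t$, so $(X_t, \tilde X_t)$ solves a coupled system on $\{X>0,\tilde X>0\}$ with $X$-drift $(a+q)/X_t - \tilde q/\tilde X_t$ and $\tilde X$-drift $(a+\tilde q)/\tilde X_t - q/X_t$. Because $\lambda_1,\lambda_2 \ge \lambda_0$ we have $q, \tilde q \ge \tfrac12 - a$ by \eqref{oct5.5}, hence $a+q, a+\tilde q \ge \tfrac12$; and since $X_t+\tilde X_t \ge x-y$, at most one coordinate can be small at a time, so near that part of the boundary the cross-drift term is bounded and a comparison with a Bessel process of dimension $\ge 2$ shows neither coordinate reaches $0$ and the system is non-explosive. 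Then the criterion of Section \ref{girsanovsec} (no mass escapes to infinity in finite time) gives that $M_t$ is a martingale. The main obstacle is exactly this last step --- controlling the tilted two-dimensional diffusion near the boundary of the quadrant --- where the monotonicity of $S_t = g_t(x)-g_t(y)$ is what saves the day; everything else is the routine It\^o bookkeeping of the first two steps.
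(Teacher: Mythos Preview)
Your proof is correct and follows essentially the same route as the paper's: record the Bessel-type SDEs for $X_t,\tilde X_t$, apply It\^o's formula to see the drift of $M_t$ vanishes by \eqref{oct5.5.1} and $r=q\tilde q/a$, then tilt by $M_t$ via Girsanov and check the resulting coupled Bessel-like system does not hit the boundary. In fact you give more detail than the paper on the last step --- the paper simply asserts that examining the coupled equations shows $M_t$ is a martingale, while you make explicit the key observation that $S_t=X_t+\tilde X_t=g_t(x)-g_t(y)$ has no Brownian part and stays bounded below by $x-y$, so at most one coordinate can be small and the comparison with a Bessel process of index $a+q\ge 1/2$ goes through.
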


\begin{proof}
Recalling that
\[  dX_t = \frac{a}{X_t} \, dt + dB_t, \;\;\;\;
              d\tilde X_t = \frac a {\tilde X_t} \, dt - dB_t, \]
we can use It\^o's formula and the product rule to show that
\[  dM_t = M_t \,\left[ K_t \,dt  
 + \left(\frac{q}{X_t} - \frac {\tilde q}
{\tilde X_t} \right) \, dB_t \right], \]
where
\[ K_t =  \frac{qa + \frac 12 q(q-1) -
  a \lambda_1}{X_t^2} + \frac{\tilde q a + \frac 12 \tilde q(
  \tilde q -1) - a\lambda_2}
   {\tilde X_t^2}  
+\frac{ar-\tilde q q}{X_t\, \tilde X_t} 
      = 0 , \]
for our choice of $q,\tilde q,r$.
If we weight the paths by $M_t$, we get  
\[            dB_t = \left(\frac{q}{X_t} - \frac{\tilde q}
  {\tilde X_t} \right) \, dt + dW_t. \]
   Therefore,
\[     dX_t = \left(\frac{a+q}{X_t} - \frac{\tilde q}
  {\tilde X_t}\right) \, dt + dW_t,\]
\[  d \tilde X_t  = \left
(\frac{a+\tilde q}{\tilde X_t} - \frac{q}
  { X_t}\right) \, dt - dW_t. \]
By examining these coupled Bessel-like processes we can
see that $M_t$ is a martingale.
\end{proof}

This lemma and (\ref{oct5.5.2})
go a long way to proving an important estimate.  
For large $t$, $X_t \approx t^{1/2},
\tilde X_t \approx t^{1/2}$.  Since $\E[M_t] = M_0$,
we get
\[  \E[ g_t'(x)^{\lambda_1} \, g_t'(y)^{\lambda_2}]
  \asymp t^{-q(\lambda_1  ,\lambda_2)/2} , \]
where
\[   q(\lambda_1,\lambda_2) = q(\lambda_1) + q(\lambda_2)
   + \frac{q(\lambda_1)\,q(\lambda_2)}{a}. \]
  In fact, one can show that for each $x > 0$ there is a
$c_x >0$ such that
\[   \lim_{t \rightarrow \infty}
    t^{q(\lambda_1,\lambda_2)/2} \, \E[g_t'(x)^{\lambda_1}
       \, g_t'(-1)^{\lambda_2}] = c_x. \]

We will now define the chordal crossing exponents
$\tilde \xi = \tilde \xi_\kappa$
\begin{itemize}
\item   $\tx(\lambda) = \lambda.$
\item \[ \tx(\lambda_1,\lambda_2) = \lambda_1 + \lambda_2
   + \frac{q(\lambda_1) \, q(\lambda_2)}{a} . \]
Note that
\[    \tx(\lambda_1,\lambda_2) = q^{-1}
  \left(q(\tx(\lambda_1))
            +q(\tx(\lambda_2))\right) = q^{-1}
  \left(q(\lambda_1) + q(\lambda_2)\right).\]
\item More generally we define
\[   \tx(\lambda_1,\lambda_2,\ldots,\lambda_n)
  = q^{-1} \left(q(\lambda_1) + \cdots + q(\lambda_n)
  \right) . \]
\end{itemize}
Clearly $\tx$ is a symmetric function and one can check
that it satisfies the ``cascade relation''
\[    \tx(\lambda_1,\ldots,\lambda_n) =
  \tx(\tx(\lambda_1,\ldots,\lambda_k),\tx(\lambda_{k+1},
  \ldots,\lambda_n)). \]
If $\tx_n = \tx(\underbrace{b,\cdots,b}_n)$, then
\[   \tx_{n+1} = \tx(b,\tx_n) = \tx_n + b + q(\tx_n) = 
 \tx_n + \frac{a + \sqrt{(2a-1)^2 + 8a\tx_n}}
          2, \]
which yields by induction
\[  \tx_n = \frac{an^2 + (2a-1) \, n}{2} . \]

\begin{example}  The case $\kappa = 8/3$ gives the 
(chordal or half-plane)
Brownian intersection
exponents.  Here we have
\[  q(\lambda) = - \frac 14 + \frac 14 \,\sqrt{1 + 24\lambda},\]\[
           \tx(\beta,\lambda) =\beta + 
 \lambda +    \frac{q(\beta)}{3}
   \left[\sqrt{1 + 24 \lambda} - 1\right]. \]
In particular, $q(1) = 1 , q(2) = 3/2$ and hence
\[      \tx(1,\lambda) =   1 +
\lambda  + \frac 43 \, q(\lambda ) = \lambda  + \frac 23
  +\frac 13 \,   \sqrt{1 + 24 \lambda},
 \]
\[   \tx(2,\lambda) =  
       2 + \lambda + 2q(\lambda)  =   \lambda + \frac 32
   + \frac 12  \,   \sqrt{1 + 24 \lambda}. \]
\end{example}

\section{Nonintersecting $SLE_\kappa$}

We will assume $\kappa \leq 4$ and $k$ is a positive
integer.  Suppose $D$ is a simply connected domain and
$z_1,\ldots,z_k,w_1,\ldots,w_k$ are distinct points on
the boundary of $D$.  We will assume that $\p D$ is locally
analytic near all of these boundary points. We define
$SLE_D(z_1,\ldots,z_k;w_1,\ldots,w_k)$ which is a measure
on $k$-tuples of curves $\bar \gamma = (\gamma^1,\ldots,
\gamma^k)$ with $\gamma^j$ connecting $z_j$ to $w_j$. 
The measure is defined inductively on $k$.
\begin{itemize}
\item  If $k=1$, $\mu_D(z_1,w_1)$ is $SLE_\kappa$ from
$z_1$ to $w_1$ in $D$.
\item  Suppose $SLE_D(z_1,\ldots,z_{k-1}; w_1,\ldots,w_{k-1})$ 
has been defined.  This is a measure on $(k-1)$-tuples
$\hat \gamma = (\gamma^1,\ldots,\gamma^{k-1})$.  For each
realization of $\hat \gamma$, let $D_{\hat \gamma}$
denote the connected component of $D \setminus {\hat \gamma}$
that contains $z_k$ and $w_k$ on the boundary.  Then
for the marginal on $\hat \gamma$ we define
\[    \frac{d\mu_D(z_1,\ldots,z_k;w_1,\ldots,w_k)}
   {d\mu_D(z_1,\ldots,z_{k-1} ;w_1,\ldots,w_{k-1})} \, (\bar \gamma)
            = C(D_{\hat \gamma}; z_k, w_k ). \]
If $z_k,w_k$ are not on the boundaries of the same connected
component we set $ C(D_{\hat \gamma}; z_k, w_k ) = 0$.
\item  The conditional probability measure on
$z_k,w_k$ given $\hat \gamma$ is that of
$\mu_{D_{\hat \gamma}}(z_k,w_k)$.
\end{itemize}
While the definition above makes it appear that the definition
depends on the order that we specify the pairs $(z_1,w_1),\ldots,
(z_k,w_k)$, we can give an equivalent definition that is
independent.  Let $Y = Y_D(z_1,\ldots,z_k,w_1,\ldots,w_k)$ denote
the Radon-Nikodym derivative of $\mu_D(z_1,\ldots,z_k;w_1,\ldots,w_k)$ 
with respect to the product measure
\[   \mu_D(z_1,w_1) \times \cdots \times \mu_D(z_k,w_k) . \]
Then if $\bar \gamma = (\gamma^1,\ldots,\gamma^k)$,
\[    Y(\bar \gamma)= 1\{\gamma^j \cap \gamma^n = \emptyset,
              1 \leq j < n \leq k\} \, e^{-\Theta \c/2} \]
where $\Theta$ denotes the Brownian loop measure of the set of
loops that intersect at least two of the curves.  We will not
give the proof which is essentially an application of the
ideas from Lecture 4.

     The quantity
 \[   \frac{C(D;z_1,z_2,\ldots,z_k;w_1,w_2,\ldots,w_k)}
{C(D;z_1,w_1) \, C(D;z_2,w_2) \, \cdots \, C(D;z_k,w_k)}\]
is a conformal invariant. For $k=2$, it is given by $\phi(L)$ defined
in \eqref{may30.1}.

\section{Radial exponent and SAW III}

Let us consider radial $SLE_{\kappa}$ in
$\Disk$ from $1$ to $0$ parametrized
as in Lecture 5,
Section \ref{originalsec} with $g_t'(0) = e^{2at}$. Let $\theta
\in (0,\pi)$.  We will consider $\E[|g_t'(e^{2i\theta})|^\lambda].$
As seen in that section, this is an estimate about a one-dimesional
diffusion.  Indeed,
\[  
|g_t'(e^{2i\theta})|^\lambda = \exp\left\{-a\lambda \int_0^t
  \frac{dt}{\sin^2 \Psi_t} \, dt \right\},\]
where $\Psi_t$ satisfies
\begin{equation}  \label{may26.3}
       d\Psi_t = a \, \cot \Psi_t \, dt + dW_t. 
\end{equation}

\begin{proposition}  \label{prop.may23.rad}
If $a \geq 1/4$ and $\lambda > \lambda_0$, then
there exists $c$ such that 
\[   \E[|g_t'(e^{2i\theta})|^\lambda] \sim c \, e^{-2a\beta t}
  \, \sin^r \theta  = c \, g_t'(0)^{-\beta   } \, \sin^q
  \theta . \]
where 
\begin{equation}  \label{may26.1}
            q=q(\lambda)
  = \frac{(1-2a) + \sqrt{(2a-1)^2 + 8a\lambda}}{2},  
\;\;\;\;      \beta = \beta(\lambda)=
 \frac  \lambda  2 + \frac q{4a}. 
\end{equation}
\end{proposition}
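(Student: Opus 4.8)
The plan is to run the same argument as in Proposition~\ref{prop.may23}, but now for the one-dimensional diffusion $\Psi_t$ of \eqref{may26.3} instead of a Bessel process. As recalled at the start of this section (see also Section~\ref{originalsec}), in the parametrization with $g_t'(0)=e^{2at}$ one has
\[
  |g_t'(e^{2i\theta})|^\lambda = J_t := \exp\left\{-a\lambda\int_0^t\frac{ds}{\sin^2\Psi_s}\right\},\qquad \Psi_0=\theta ,
\]
where $d\Psi_t=a\cot\Psi_t\,dt+dW_t$. Thus $\E[|g_t'(e^{2i\theta})|^\lambda]=\E^\theta[J_t]$, and the statement becomes a fact about the diffusion $\Psi_t$ (the hypothesis $\lambda>\lambda_0$ is exactly the threshold below which this exponential functional of a Bessel-type process ceases to be integrable).

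First I would build an explicit martingale. Seeking $\phi$ and $\mu$ so that $M_t:=e^{\mu t}\phi(\Psi_t)J_t$ is a local martingale: since $J_t$ has bounded variation, It\^o's formula shows this holds precisely when
\[
  \frac12\phi''(\theta)+a\cot\theta\,\phi'(\theta)+\left(\mu-a\lambda\csc^2\theta\right)\phi(\theta)=0 .
\]
With the ansatz $\phi(\theta)=\sin^q\theta$ and $\cos^2\theta=1-\sin^2\theta$, the $\sin^{q-2}\theta$ part of this equation forces $q^2+(2a-1)q-2a\lambda=0$, i.e.\ \eqref{oct5.5.1}, so $q=q(\lambda)$ is the root $q_+$ of \eqref{oct5.5}; the $\sin^q\theta$ part forces $\mu=\frac12 q^2+aq$, which, after substituting $q^2=(1-2a)q+2a\lambda$, becomes $\mu=\frac q2+a\lambda=2a\beta$ with $\beta$ as in \eqref{may26.1}. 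So
\[
  M_t=e^{2a\beta t}\sin^q\Psi_t\,J_t,\qquad dM_t=q\cot\Psi_t\,M_t\,dW_t ,
\]
is a nonnegative local martingale with $M_0=\sin^q\theta$.

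Then I would apply Girsanov as in Section~\ref{girsanovsec}. Reweighting the paths by $M_t/M_0$ makes $\Psi_t$ satisfy $d\Psi_t=(a+q)\cot\Psi_t\,dt+d\tilde W_t$ under the new measure $\tilde\Prob$. Since $\lambda\ge\lambda_0$ gives $q\ge\frac12-a$, hence $a+q\ge\frac12$, a Bessel comparison shows $\Psi_t$ never reaches $\{0,\pi\}$ under $\tilde\Prob$; the no-explosion criterion of Section~\ref{girsanovsec} then upgrades $M_t$ to a genuine martingale. The $\tilde\Prob$-diffusion on $(0,\pi)$ is positive recurrent with invariant density $\rho(\psi)\propto\sin^{2(a+q)}\psi$. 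Writing $J_t=e^{-2a\beta t}\sin^{-q}(\Psi_t)\,M_t$ and using the martingale property,
\[
  \E^\theta[J_t]=e^{-2a\beta t}\,\sin^q\theta\;\tilde\E^\theta\!\left[\sin^{-q}\Psi_t\right],
\]
and ergodicity gives $\tilde\E^\theta[\sin^{-q}\Psi_t]\to c:=\int_0^\pi\sin^{-q}\psi\,\rho(\psi)\,d\psi\in(0,\infty)$ (finite since $q+2a>-1$). This yields $\E[|g_t'(e^{2i\theta})|^\lambda]\sim c\,\sin^q\theta\,e^{-2a\beta t}=c\,g_t'(0)^{-\beta}\sin^q\theta$, which is the assertion (with $r=q$).

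The It\^o computations are routine and parallel Proposition~\ref{prop.may23}. The substantive steps are in the last paragraph: proving that $M_t$ is a true (not merely local) martingale, and, as in Exercise~\ref{jun4.ex1} and Theorem~\ref{dimensiontheorem}, turning positive recurrence of the $\sin\Psi$-diffusion into the convergence --- not just boundedness --- of $\tilde\E^\theta[\sin^{-q}\Psi_t]$; this quantitative ergodicity, which is where the hypothesis $a\ge 1/4$ is used, is the main obstacle.
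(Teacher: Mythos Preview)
Your argument is correct and is essentially the same as the paper's (sketch) proof: build the local martingale $M_t=e^{kt}\sin^q\Psi_t\,J_t$ with $k=a\lambda+q/2=2a\beta$, use Girsanov to tilt $\Psi_t$ to a $d\Psi_t=(a+q)\cot\Psi_t\,dt+d\tilde W_t$ diffusion, and then read off the asymptotics from the invariant measure. You have in fact filled in exactly the details the paper defers to the exercises---the explicit ODE, the tilted SDE, the stationary density $\rho(\psi)\propto\sin^{2(a+q)}\psi$, and the identification of $c$ as $\tilde\E[\sin^{-q}\Psi]$ under that density.
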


\begin{proof} (Sketch)  If 
\begin{equation}  \label{may26.1.1}
        r^2 +r(2a-1) - 2a\lambda =0, \;\;\;\;
   k = a \lambda + \frac r2 ,
\end{equation}
then $M_t = |g_t'( e^{2i\theta})|^\lambda \,  \sin^r \Psi_t \,
  e^{k t} ,$ is a local martingale satisfying
\[               dM_t = r \, \cot \Psi_t \, M_t \, dW_t. \]
If $r$ is chosen as in \eqref{may26.1}, then using Girsanov we
can show that $M_t$ is actually a martingale and hence
\[           \sin^r \theta = e^{kt} \,
\E\left[ |g_t'( e^{2i\theta})|^\lambda \,  \sin^r \Psi_t 
   \right]. \]
We then proceed as in Proposition \ref{prop.may23}.
\end{proof}

\begin{example}  We have already seen an example of this
exponent.  Let $\lambda = b$.  Then $r = a, \beta = 3a/4$.  
If $C$ denotes the raidal $SLE$ total mass,
then
\[   C(\Disk \setminus \gamma(0,t];e^{2i\theta},0)
        = g_t'(0)^{\tilde b} \, |g_t'(e^{2i \theta})|^b . \]
Hence we can interpret this as
\[  \E\left[    C(\Disk \setminus \gamma(0,t];e^{2i\theta},0)
 \right] \sim c \, \sin^b \theta \, g_t'(0)^{\tilde b - \frac {3a}{4}}.\]

\end{example}

\begin{example} (SAW III)
 Here we will show how to predict a critical exponent
for self-avoiding walk.  Define the exponent $u$ by saying that
the probability that two self-avoiding walks of $n$ steps starting
at the origin avoid each other decays like $n^{-u}.$  
The typical distance of a SAW of length $n$ is $n^{1/d}$
where $d = 4/3$ is the fractal dimension of the paths.
 Hence, we can rephrase
the definition of $u $
 as saying that the probability that two SAWs go distance
$R$ without an intersection decays like $R^{-ud}$.  Given the 
conjectured relation between SAWs and $SLE_{8/3}$, we can guess
that if $\gamma^1,\gamma^2$ are  independent radial  $SLE_{8/3}$
in $\Disk$ going to $0$ starting at $+1,-1$, respectively, then
the probability that $\gamma^1$ reaches the ball of radius
$\delta$ about the origin without hitting the path of $\gamma^2$
decays like $\delta^{ud}$.  The restriction property tells us that
for a fixed realization of $\gamma^1$ (with corresponding
conformaal maps $g_t$), the probability that 
$\gamma^2$ avoids $\gamma^1(0,t]$ is given by
\[                g_t'(0)^{\tilde b} \, |g_t'(-1)|^b, \]
with $\tilde b = 5/48, b = 5/8$. Also, the Koebe-(1/4) theorem
tells us that the distance from the origin to $\gamma^1(0,t]$ is
comparable to $g_t'(0)$.  Our proposition tells us that
\[   \E[ |g_t'(-1)|^b] \asymp g_t'(0)^{-\beta}, \;\;\;\;
   \beta = \beta(b) = \frac{3a}{4} = \frac{9}{16}. \]
Hence the probability of no intersection is given by
\[ \E[ g_t'(0)^{\tilde b} \, |g_t'(-1)|^b] \sim g_t'(0)^{-11/24}. \]
This gives $ud = 11/24, u = 11/32$ and
matches the prediction first given by Nienhuis.  The exponent
$u$ is the same as $\gamma -1$ described in Lecture 1, Section \ref{sawsec}.
\end{example}

The radial exponent $\xi$ is defined by
\[       \xi(b, \lambda) = \tilde b + \beta(\lambda) 
= \tilde b + \frac \lambda 2 +
      \frac{q(\lambda)}{4a} . \]
More generally, we define
\[       \xi(b,\lambda_1,\ldots,\lambda_k) =
  \xi(b,\tx(\lambda_1,\ldots,\lambda_k)) . \]
Note that the chordal exponent $\tx$ appears in the definition
of $\xi$.  The exponent $\xi(\lambda_1,\lambda_2)$ is
defined so that the foll0wing holds:
\[  \xi(b,\lambda,\lambda_2) = \xi(b ,\tx(\lambda,\lambda_2))
       = \xi(\tx(b,\lambda_1),\lambda_2) . \]

\begin{example}  If $\kappa = 8/3$, then $\tx(5/8,1/8) = 1,
q(1/8) = 1/4,
\tx(5/8,5/8) = 2, q(5/8) = 3/4$, 
\[   \xi(5/8,\lambda) = 
\frac{5}{48} + \frac \lambda 2 +
      \frac 13 \, {q(\lambda)}  = \frac{1}{48} + \frac \lambda 2 +
   \frac 1{12} \, \sqrt{1 + 24 \lambda}
  . \] 
One can check that
\[  \xi(1,\lambda) = \frac 18 + \frac {\lambda}{2} +
  \frac 1 8\, \sqrt{1 + 24 \lambda}, \]
\[  \xi(2,\lambda) = \frac {11}{24} + \frac \lambda 2 + \frac{5}{
 24} \,  \sqrt{1 + 24 \lambda}. \]
These are the one-sided and two-sided Brownian intersection
exponents.  The values $\xi(1,0) = 1/4, \xi(2,0) = 2/3$
are the one-sided and two-sided disconnection exponents.
\end{example}

\section{Exercises for Lecture 6}

\begin{exercise}
Find the constant $c$ in Proposition \ref{prop.may23}.
\end{exercise}

\begin{exercise}
Complete the details in the proof of Proposition \ref{prop.may23.rad}.
Note that in the proposition,  $r$ was chosen to be a particular root
of the quadratic equation in \eqref{may26.1.1}.  Show that if
we choose $r $ to be the other root of that equation, then
$M_t$ is not a martingale (even though it is a local martingale).
Express $c$ in terms of invariant densities for diffusions of
the form \eqref{may26.3}.
\end{exercise}

\begin{exercise}  In Proposition \ref{prop.may23.rad}, note that
$\beta > \lambda/2$ if $\lambda > 0$ and 
$\beta \sim \lambda/2$  as $\lambda \rightarrow \infty$.  
Show how these facts can be deduced from   the
Beurling estimate.
\end{exercise}

\begin{exercise} Fix $\kappa \leq 4$ and positive integer $n$.
Let $0 < y_1 < \cdots < y_n < \pi$ and let $\rect_L
= [0,L] \times [0,\pi].$ Let $z_j = y_ji, w_j = w_{j,L} =
 L +  y_ji$.  Show that as $L \rightarrow \infty$,
\[  C(\rect_L;z_1,\ldots,z_n;w_1,\ldots,w_n) \asymp
              e^{-L \tx_n} , \]
where $\tx_n$ is as defined in Section \ref{intersec}. 
\end{exercise}

\begin{exercise}  For $\kappa =2$, it can be shown using
an idenity of Fomin that
\[ C(\rect_L;z_1,\ldots,z_n;w_1,\ldots,w_n) = 
   \det\left[H_{\p \rect_L}(z_j,w_k)\right]. \]
Use this identity to give another proof that 
\[            \tx_n = \frac{n^2+n}{2}, \;\;\;\;
  \kappa =2. \]
\end{exercise}

\lecture*{Tables for reference}

\begin{table}\caption{Parameters for $SLE$}
\begin{tabular}{|c|c|c|}   \hline   
\raisebox{-1.5ex}{$\kappa$} & \raisebox{-1.5ex} {variance
 in driving function}   & \raisebox{-1.5ex}  {$\kappa > 0$}
\\[3ex] \hline   
          \raisebox{-1.5ex}{  $a$}
 & \raisebox{-1.5ex} {rate of capacity growth}
 & \raisebox{-1.5ex} {$\frac 2 \kappa$} \\ [3ex] \hline   
            \raisebox{-1.5ex}{$d$} & \raisebox{-1.5ex} 
{dimension of paths} &  \raisebox{-1.5ex}{$1 + \frac \kappa 8 =
                1 + \frac 4 a, \;\;\;\;  \kappa \leq 8$} \\  [3ex] \hline
            \raisebox{-1.5ex}{$b$}
 &  \raisebox{-1.5ex}{boundary scaling exponent}
   &   \raisebox{-1.5ex}{$\frac{3a-1}{2} =
                    \frac{6 - \kappa}{2\kappa}$ }\\ [3ex]\hline
           \raisebox{-1.5ex}{ $\tilde b$ }
&  \raisebox{-1.5ex}{one-sided interior scaling exponent}
   &  \raisebox{-1.5ex}{$\frac{1-a}{2a} \,b =
  \frac{\kappa -2}{4} \, b$} \\[3ex] \hline
   \raisebox{-1.5ex}{ $\hat b$ }
&  \raisebox{-1.5ex}{two-sided interior scaling exponent}
   &  \raisebox{-1.5ex}{$2-d =
   1 - \frac{\kappa}{8}$} \\[3ex] \hline  \raisebox{-1.5ex}{$ \c$ }
 &  \raisebox{-1.5ex}{central charge }
&  \raisebox{-1.5ex}{$\frac{2b(3-4a) }{a} = \frac{(3-4a)(3a-1)}
  a = \frac{(3 \kappa - 8)(6 - \kappa)}{2 \kappa}$} \\[3ex] \hline
\end{tabular}
\end{table}

\begin{table}\caption{Some discrete models}

\begin{tabular}{|c|c|c|c|c|c|c|}  \hline
  & \raisebox{-1.5ex}{$\kappa$}
& 
\raisebox{-1.5ex}{$a$} &\raisebox{-1.5ex}{$b$} &
\raisebox{-1.5ex}{$\tilde b$} & 
\raisebox{-1.5ex}{$\c$}& \raisebox{-1.5ex}{$d$}\\[3ex]
 \hline  
\raisebox{-1.5ex}{loop-erased walk} & \raisebox{-1.5ex}2 &  \raisebox{-1.5ex}
   1 &
\raisebox{-1.5ex}1 & \raisebox{-1.5ex}{$0$} & \raisebox{-1.5ex}{$-2$} & 
  \raisebox{-1.5ex}{$\frac 54$}\\[3ex]
\hline
\raisebox{-1.5ex}{self-avoiding walk} & \raisebox{-1.5ex}{$\frac 83$} &  \raisebox{-1.5ex}
   {$\frac 34$}  &
\raisebox{-1.5ex}{$\frac 58$} & \raisebox{-1.5ex}{$\frac 5{48}$} &\raisebox{-1.5ex}{$0$} & 
  \raisebox{-1.5ex}{$\frac 43$}\\[3ex]
\hline
\raisebox{-1.5ex}{Ising interface} 
 & \raisebox{-1.5ex}{$3$} &  \raisebox{-1.5ex}
   {$\frac 23$}  &
\raisebox{-1.5ex}{$\frac 12$} & \raisebox{-1.5ex}{$\frac 18 $}
    & \raisebox{-1.5ex}{$\frac 12 $} & 
  \raisebox{-1.5ex}{$\frac{11}8$}\\[3ex]
\hline
\raisebox{-1.5ex}{harmonic explorer}&&&&& &\\
  \raisebox{-1.5ex}{free field interface}
 & {$4$} &  
   {$\frac 12$}  &
 {$\frac 14 $} & {$\frac 18 $}
    &  {$1 $} & 
 {$\frac 32$}\\[3ex]\hline
\raisebox{-1.5ex}{percolation interface} 
 & \raisebox{-1.5ex}{$6$} &  \raisebox{-1.5ex}
   {$\frac 13$}  &
\raisebox{-1.5ex}{$0$} & \raisebox{-1.5ex}{$0 $}
    & \raisebox{-1.5ex}{$0 $} & 
  \raisebox{-1.5ex}{$\frac 74$}\\[3ex]\hline
\raisebox{-1.5ex}{uniform spanning tree} 
 & \raisebox{-1.5ex}{$8$} &  \raisebox{-1.5ex}
   {$\frac14$}  &
\raisebox{-1.5ex}{$- \frac 18 $} & \raisebox{-1.5ex}{$  -\frac 3{16}$}
    & \raisebox{-1.5ex}{$-2 $} & 
  \raisebox{-1.5ex}{$2$}\\[3ex]\hline
\end{tabular}
\end{table}
 









\end{document}